\documentclass[a4paper,12pt]{amsart}

\usepackage[utf8x,]{inputenc} 
\usepackage{mathtools}
\usepackage{amsthm, amsmath, latexsym, amsfonts, tikz}
\usepackage[all]{xy}

\usepackage{amssymb}
\usepackage{mathrsfs}
\usepackage{enumitem}
\usepackage{amscd}
\usepackage{tikz-cd}
\usepackage{booktabs}
\usepackage{tikz-qtree}

\numberwithin{equation}{section}
\theoremstyle{plain}
\newtheorem{teo}[equation]{Theorem}
\newtheorem{lem}[equation]{Lemma}
\newtheorem{prop}[equation]{Proposition}
\newtheorem{cor}[equation]{Corollary}
\newtheorem{conj}{Conjecture}
\newtheorem*{notation}{Notation and conventions}

\theoremstyle{remark}
\newtheorem{rem}[equation]{Remark}
\theoremstyle{plain}
\newtheorem{defn}[equation]{Definition}

\newtheorem*{reftheorem*}{Theorem \reftotheorem}
\newenvironment{reftheorem}[1]
  {\newcommand{\reftotheorem}{\ref{#1}}\begin{reftheorem*}}
  {\end{reftheorem*}}

\newtheorem{nonumberingt}{Acknowledgements}

\DeclareMathOperator{\Grass}{Gr}
\DeclareMathOperator{\Pic}{Pic}
\DeclareMathOperator{\Pj}{\mathbb{P}}
\DeclareMathOperator{\id}{id}
\DeclareMathOperator{\rank}{rank}
\DeclareMathOperator{\Gr}{Gr}
\DeclareMathOperator{\Aut}{Aut}
\DeclareMathOperator{\cl}{cl}
\DeclareMathOperator{\alg}{alg}
\DeclareMathOperator{\tr}{tr}
\DeclareMathOperator{\Fix}{Fix}

%

\title{Algebraic cycles on Todorov surfaces of type $(2,12)$}
\author[Natascia Zangani]{Natascia Zangani}
\email{natasciazangani@gmail.com}
\begin{document}
\maketitle

\begin{abstract}
We focus on Voisin's conjecture on 0--cycles on the self--product of surfaces of geometric genus one, which arises in the context of the Bloch--Beilinson filtration conjecture. We verify this conjecture for the family of Todorov surfaces of type $(2,12)$, giving an explicit description of this family as quotient surfaces of the complete intersection of four quadrics in $\mathbb{P}^{6}$. We give some motivic applications.
\end{abstract}
\section{Introduction}
The study of the influence  of Chow groups on singular cohomology is motivated by Mumford's theorem on 0--cycles and has been investigated extensively, whereas the converse influence is rather conjectural. For example, Bloch's conjecture is still open and the Bloch--Beilinson's filtration conjecture is still far from being solved. In the spirit of exploring this influence, Voisin formulated in 1996 the following conjecture on 0--cycles on the self--product of surfaces of geometric genus one, which is implied by the generalized Bloch conjecture adapted to motives (see \cite[Conjecture 4.33]{voisin2014chow} and \cite[Section 4.3.5.2]{voisin2014chow} for an analysis of this relation).

\begin{conj}[Voisin \cite{voisin1996remarks}]
\label{conj:Voi}
Let $S$ be a smooth complex projective surface with $h^{2,0}(S)=p_{g}(S)=1$ and $h^{1,0}(S)=q(S)=0$. Let $a,a'\in A^{2}_{hom}(S)$ be two $0$--cycles of degree 0 (i.e. homologically trivial $0$--cycles). Let $p_{1}, p_{2}$ be the projections on the first and on the second factor of $S\times S$ respectively.
Then
\begin{equation}
\label{eq:conj}
(p_{1}^{*}a)\cdot (p_{2}^{*}a')=(p_{1}^{*}a')\cdot (p_{2}^{*}a) \hbox{ in } A^{4}(S\times S).
\end{equation}
\end{conj}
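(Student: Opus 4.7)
The plan is to exploit the explicit description (to be established earlier in the paper) of a Todorov surface $S$ of type $(2,12)$ as the minimal desingularisation of a quotient $X/\iota$, where $X\subset\mathbb{P}^{6}$ is a smooth complete intersection of four quadrics and $\iota$ is an involution. The strategy is to reduce Voisin's conjecture on $S\times S$ to the analogous statement on an auxiliary K3 surface $Y$, and to verify it there.

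First I would analyse the action of $\iota$ on the cohomology and on the Chow groups of $X$. Since $p_{g}(S)=1$, the $\iota$--invariant part of $H^{2,0}(X)$ is one--dimensional and agrees with $H^{2,0}(S)$, while the anti--invariant part of $H^{2}(X)$ is expected to carry the transcendental Hodge structure of a natural K3 surface $Y$, built for instance from the net of quadrics through $X$ via a double cover along the discriminant locus. I would then upgrade this Hodge--theoretic match to an isomorphism of rational Chow motives
\[
t_{2}(S) \;\cong\; t_{2}(Y),
\]
realised by explicit correspondences between $S$ and $Y$ factoring through $X$.

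Second, I would verify Voisin's conjecture on $Y$. Writing the conjecture motivically as the vanishing of the antisymmetrisation
\[
A^{2}_{hom}(Y)\otimes A^{2}_{hom}(Y)\longrightarrow A^{4}(Y\times Y), \qquad a\otimes a' \mapsto a\times a' - a'\times a,
\]
one can deduce it on a K3 surface from Kimura--O'Sullivan finite--dimensionality of $h(Y)$. I would try to establish the latter by exhibiting a dominant rational map onto $Y$ from a product of curves (for instance if $Y$ turns out to be a Kummer surface, or an elliptic K3 with enough sections). Failing a direct finite--dimensionality proof, I would combine the Bloch--Srinivas method with the Beauville--Voisin canonical zero--cycle on $Y$ to symmetrise by hand using cycles supported on rational curves. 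Transporting the resulting identity along the correspondence of the first step closes the argument on $S$.

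The main obstacle is precisely the motivic match $t_{2}(S)\cong t_{2}(Y)$, together with a description of $Y$ explicit enough to access finite--dimensionality. Both require careful control of the exceptional divisors arising when resolving $X/\iota$: they contribute to the algebraic summand of $h(S)$ and must be peeled off before comparing transcendental parts. Once these geometric ingredients are in place, the motivic transfer and the symmetrisation argument become largely formal.
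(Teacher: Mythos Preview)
Your outline has the right overall shape---reduce to an auxiliary K3 and transport the identity back---but there are two genuine gaps, and in both places the paper's route is essentially forced.

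\textbf{The motivic upgrade is the whole problem.} You write that after matching Hodge structures you would ``upgrade this Hodge--theoretic match to an isomorphism of rational Chow motives $t_{2}(S)\cong t_{2}(Y)$, realised by explicit correspondences.'' That step is not a formality: passing from a Hodge isomorphism to a Chow--level statement is exactly the content one is trying to prove. The paper does not attempt a direct argument here. Instead it works with the \emph{complete family} $\mathcal{S}\to B$, compactifies $\mathcal{V}\times_{B}\mathcal{V}$ inside a variety $X\subset\overline{B}\times\mathbb{P}^{6}\times\mathbb{P}^{6}$, and shows by an explicit stratification that $X$ has trivial Chow groups, hence $A^{2}_{\mathrm{hom}}(\mathcal{S}\times_{B}\mathcal{S})_{\mathbb{Q}}=0$. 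Only then can Voisin's ``spreading of cycles'' and a Leray argument force a fibrewise cohomological identity to become a rational equivalence, yielding $A^{2}_{\mathrm{hom}}(S_b)_{\mathbb{Q}}\cong A^{2}_{\mathrm{hom}}(M_b)_{\mathbb{Q}}$ and, a posteriori, $t_{2}(S)\cong t_{2}(M)$. Your proposal omits this entire mechanism; without it there is no way to promote the Hodge comparison.

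\textbf{Finite--dimensionality of the K3 is not available.} For the K3 side you propose Kimura--O'Sullivan finite--dimensionality, or a Kummer/product--of--curves domination. None of these is known for the relevant K3 surfaces; the paper says so explicitly. What \emph{is} available is Rito's theorem: the K3 associated to any Todorov surface admits a degree--2 cover of $\mathbb{P}^{2}$ branched along a union of two cubics, and Voisin herself verified Conjecture~\ref{conj:Voi} for exactly that class of K3 surfaces. This is the input the paper uses, and your alternatives would not close the argument.

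Two smaller corrections to the geometry: the cover is $V\to S=V/G$ with $G\cong(\mathbb{Z}/2\mathbb{Z})^{2}$, not a single involution; and the correct K3 is the one built into the definition of a Todorov surface, namely the minimal resolution of $S/\sigma$, not a K3 coming from the discriminant of the net of quadrics.
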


\begin{rem}
To ease the notation, we use the following convention: $a\times a':=(p_{1}^{*}a)\cdot (p_{2}^{*}a')$. So \eqref{eq:conj} becomes:
\[
a \times a'= a' \times a \in A^{4}(S\times S).
\]
\end{rem}

There are few examples in which Conjecture \ref{conj:Voi} has been verified (see  \cite{voisin1996remarks}, \cite{laterveer2016some}, \cite{laterveer2018algebraic}, \cite{2016arXiv161108821L}), but it is still open for a general $K3$ surface.
 There are some examples in which a generalization of this conjecture for surfaces with geometric genus greater than one is true (see \cite{Laterveer_2018}, \cite{MR3941135}).
There is also an analogous version of the conjecture for higher dimensional varieties, which is studied in \cite{voisin1996remarks},\cite{laterveer2016somed}, \cite{2017arXiv170806092L}, \cite{2017arXiv170806094L}, \cite{2017arXiv170600472B}, \cite{2017arXiv171203070L}, \cite{2018arXiv180300857V}, \cite{2018arXiv181011084B}. 
 
 Our aim is to present a new example in which Conjecture \ref{conj:Voi} is true, namely a family of Todorov surfaces.
Todorov surfaces were introduced by Todorov to provide counterexamples to Local and Global Torelli (\cite{Todorov_1981}). They were classified by Morrison (\cite{morrison1988moduli}) according to the \emph{fundamental invariants} $(\alpha,k)$, where the 2--torsion group of $\Pic(S)$ has order $2^{\alpha}$ and $k=8+K^{2}_{S}$. With this classification Morrison proves that there are exactly 11 non--empty irreducible families of Todorov surfaces corresponding to
\begin{multline*}
(\alpha,k)\in \lbrace(0,9),(0,10),(0,11),(1,10),(1,11),(1,12),\\
(2,12),(2,13),(3,14),(4,15),(5,16)\rbrace.
\end{multline*}
Todorov surfaces of type (0,9) are also known as \emph{Kunev surfaces}.\\

Conjecture \ref{conj:Voi} has been proven by Laterveer for the family of Todorov surfaces of type (0,9)(\cite{laterveer2016some}). Laterveer also proved the conjecture for the family of Todorov surfaces of type (1,10) (\cite{laterveer2018algebraic}). For both of these families the core of the proof was that an explicit description as complete intersections of the family was available. 

Moreover, the following result allows the reduction to the case of a double cover of $\Pj^{2}$ ramified along the union of two cubics, for which Conjecture \ref{conj:Voi} has been proven by Voisin (\cite[Theorem 3.4]{voisin1996remarks}).
\begin{teo}[Rito \cite{rito2009note}]
\label{teo:Rito}
Let $S$ be a Todorov surface and let $M$ be the $K3$ surface associated to $S$, i.e.
the the smooth minimal model of $S/\sigma$. Then there exists a generically finite degree--2 cover $M\to \Pj^{2}$ ramified along the union of two cubics.
\end{teo}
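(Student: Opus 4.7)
The plan is to establish the double-plane structure on $M$ by constructing an appropriate degree-$2$ polarization coming from the geometry of $S$ and its involution, and then to show that the resulting sextic branch locus is forced to split as the union of two cubics by the extra symmetry inherent in Todorov surfaces. The argument has two distinct phases: producing the degree-$2$ map, which is relatively standard K3 theory, and producing the splitting of the branch divisor, which is where the Todorov hypothesis must be used in a nontrivial way.

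First I would use Morrison's analysis of $\sigma$: the involution acts nontrivially on $S$ with fixed locus consisting of a smooth curve $R$ (possibly together with isolated points), and $S/\sigma$ has only rational double points, so $M \to S/\sigma$ resolves these and is a K3 surface, with its holomorphic $2$-form pulling back to the $\sigma$-invariant generator of $H^0(S,K_S)$. On $M$ one then looks for a nef and big class $L$ with $L^2=2$ and $h^0(L)=3$; such a class can be produced from the pullback to $M$ of a distinguished class on $S/\sigma$ (arising from the restriction of the bicanonical system on $S$ to the $\sigma$-invariant part, after subtracting the appropriate exceptional contributions from the resolution). Once $L$ is in hand, the associated morphism $\varphi_{|L|}\colon M \to \mathbb{P}^2$ is automatically generically $2:1$ by the Hodge-index arithmetic on a K3, and the branch locus is a (possibly reducible) plane sextic $B$, by the standard double-cover computation $K_M = \varphi^*\mathcal{O}(-3+\tfrac{1}{2}B)$.

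The main obstacle is the last step, showing that $B$ decomposes as $B = C_1 + C_2$ with $C_1,C_2$ plane cubics. My plan is to leverage the presence of the Todorov involution together with the $2$-torsion in $\mathrm{Pic}(S)$ (or, for type $(0,9)$, a substitute coming from the canonical curve). Concretely, I would show that there is a class $\eta \in \mathrm{Pic}(M)$ with $2\eta = L$ in a twisted sense; equivalently, I would exhibit a line bundle on $M$ whose pullback to the double plane distinguishes the two halves of $B$. Such a bundle arises from the extra involution or $2$-torsion in the Todorov data: the corresponding étale (or branched) double cover of $M$ must be compatible with $\varphi_{|L|}$, and this compatibility translates into the sextic being a sum of two cubics. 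Alternatively one can argue directly with the fixed curve $R\subset S$ and its image in $\mathbb{P}^2$, showing that $R$ contributes one cubic and the remaining branch locus on $M$ (supported on the exceptional curves and on the image of the other components of the fixed locus) contributes the other. Verifying this contribution case-by-case in Morrison's list $(\alpha,k)$ is the step where I expect the real work to lie, and where the explicit classification of Todorov surfaces is indispensable.
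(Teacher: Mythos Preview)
This theorem is not proved in the present paper: it is quoted as an external result due to Rito (the citation \cite{rito2009note}), and the paper uses it as a black box in the proof of Theorem~\ref{teo:Tod}. There is therefore no proof here to compare your proposal against.

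For what it is worth, your outline is a plausible strategy for how such a result might be established, but it remains a plan rather than a proof: the construction of the class $L$ with $L^2=2$ is left vague (``can be produced from the pullback \ldots after subtracting the appropriate exceptional contributions''), and the splitting of the sextic is acknowledged to be the hard step, with two alternative approaches sketched but neither carried out. If you want to actually prove the statement you would need to consult Rito's paper and fill in the explicit construction there; the present paper offers no help on this point.
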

So any Todorov surface has an associated K3 surface for which Conjecture \ref{conj:Voi}  holds. To conclude that the conjecture holds for all Todorov surfaces one needs to be able to relate $0$--cycles on $S$ to $0$--cycles on $M$. 
The technique used to prove the conjecture in the known cases is based on Voisin's principle of ``spreading of cycles'' (\cite{voisin2013generalized}, \cite[Ch. 4]{voisin2014chow}).
This approach works as long as the irreducible family of Todorov surfaces considered $\mathcal{S}\to B$ has a nice enough description to prove the following property
\begin{equation}
\label{eq:prop}
	A_{\mathrm{hom}}^{2}(\mathcal{S}\times_{B}\mathcal{S})_{\mathbb{Q}}=0.
\end{equation}
The lack of such an explicit description for the other families of Todorov surfaces prevents to apply the same method to them.

We focus on the family of Todorov surfaces with fundamental invariants $(\alpha,k)=(2,12)$. We present an explicit description for this family as quotients of the complete intersection of four quadrics in $\Pj^{6}$. Our main result is the following theorem.

\begin{reftheorem}{teo:Tod}
Let $S$ be a general Todorov surface with fundamental invariants $(\alpha,k)=(2,12)$.\\
Then property \eqref{eq:prop} holds, and hence Conjecture \ref{conj:Voi} is true for $S$.
\end{reftheorem}

As proved by Laterveer in \cite{laterveer2018algebraic}, the conjecture has some interesting consequences on the motives. In particular, we prove that the transcendental part of the motives of the Todorov surface of type (2,12) and its associated K3- surface are isomorphic.
Up to now, the only types of K3 surfaces whose motives are known to be finite dimensional are the ones whose Picard number is high enough (\cite[Theorem 2]{MR2898747}) and such motives are of abelian type. We prove that the motive of the Todorov surfaces of type $(2,12)$ is of abelian type if the Picard number is big enough.

In Section \ref{sec:desc} we give an explicit description of the family studying the universal cover of the surfaces. To do so, we use the similar ideas to \cite{bini2014new} and \cite{neves2011unprojection}. In Section \ref{sec:cycles} we focus on 0--cycles by exploiting the idea of realizing the fibered self--product of the family of surfaces as a Zariski open set of a variety with trivial Chow groups. In Section \ref{sec:proof}  we prove Theorem \ref{teo:Tod} applying the ``spreading'' of algebraic cycles on a family following the approach of Laterveer (\cite{laterveer2018algebraic}) and Voisin (\cite{voisin2013generalized}, \cite{voisin2015generalized}). In Section \ref{sec:mot} we give a motivic version of the main result and some applications, following the approach in \cite{laterveer2018algebraic}.

\begin{notation}
We work on the field of complex numbers $\mathbb{C}$. A \emph{variety} is a quasi--projective separated scheme of finite type over $\mathbb{C}$ with the Zariski topology. A \emph{subvariety} is a reduced equidimensional subscheme.  A \emph{curve} is a variety of dimension one, a \emph{surface} is a variety of dimension two.\\
We denote the \emph{geometric genus} of a projective surface $S$ by
\[
p_g(S):=\dim H^{0}(S,\Omega^{2})=h^{2,0}(S).
\]
We denote the \emph{irregularity} of a projective surface as 
\[
q(S):=\dim(H^{0}(S,\Omega^{1}))=h^{1,0}(S).
\]
We denote the \emph{Euler-Poincar\'{e} characteristic} of a projective surface as 
\[
\chi(S):=\sum_{i=0}^{2}(-1)^{i}h^{i}(S, \mathcal{O}_{S})=1-q(S)+p_{g}(S).
\]
For $d\geq 0$, we denote the $d$th \emph{plurigenerus} of $S$ as
\[
	P_{d}(S):=h^{0}(dK_{S}).
\] 
If $X$ is a smooth $n$--dimensional variety, we denote by $A_{j}(X)=A^{n-j}(X)$ the Chow group of $j$--dimensional algebraic cycles modulo rational equivalence. When considering Chow groups with rational coefficients we use the following notation
\[
A_{j}(X)_{\mathbb{Q}}:=A_{j}(X)\otimes_{\mathbb{Z}} \mathbb{Q}.
\]
To denote algebraic cycles homologically trivial we use the notation $A^{j}_{\hom}(X)$, this is the kernel of the cycle class map $\gamma\colon A^{j}(X)\to H^{2j}(X,\mathbb{Z})$. 
Similarly, we denote by $A^{j}(X)_{AJ}$ the kernel of the Abel--Jacobi map:
\[
	AJ\colon A^{j}(X)\longrightarrow J^{2k-1}(X),
\]
where $J^{2k-1}(X)$ is the $k$--th Intermediate Jacobian of $X$.\\
We denote a projective point in $\Pj^{6}$ with homogeneous coordinates as $x:=(x_{0}\colon \dots\colon x_{6})$.
\end{notation}

\section{Geometry}
\subsection{Explicit description of the family}
\label{sec:desc}
\begin{defn}
\label{defn:todorov}
A \emph{Todorov surface} is a smooth projective surface $S$ of general type,  with $p_{g}(S)=1$, $q(S)=0$ and such that the bicanonical map $\phi_{|2K_{S}|}$ factors as
\[
\phi_{|2K_{S}|}\colon S \xrightarrow{\sigma} S \dashrightarrow \mathbb{P}^{r},
\]
where $\sigma\colon S\to S$ is an involution such that $S/\sigma$ is birational to a smooth $K3$ surface. We call the minimal resolution of $S/\sigma$ the \emph{K3 surface associated to $S$}.
\end{defn}

\begin{defn}
We define the following involutions on $\mathbb{P}^{6}$:
\begin{equation}
\label{eq:act}
\begin{split}
\sigma_{1}&\colon (x_{0}\colon \dots \colon x_{6})\mapsto (x_{0} \colon -x_{1}\colon -x_{2}\colon -x_{3}\colon -x_{4}\colon x_{5}\colon x_{6}),\\
\sigma_{2}&\colon (x_{0}\colon \dots \colon x_{6})\mapsto (x_{0} \colon x_{1}\colon x_{2}\colon -x_{3}\colon -x_{4}\colon -x_{5}\colon -x_{6}),\\
\sigma_{1}\circ \sigma_{2} &\colon (x_{0}\colon \dots \colon x_{6})\mapsto (x_{0} \colon -x_{1}\colon -x_{2}\colon x_{3}\colon x_{4}\colon -x_{5}\colon -x_{6}),\\
\sigma&\colon (x_{0}\colon \dots \colon x_{6})\mapsto (x_{0} \colon -x_{1}\colon -x_{2}\colon -x_{3}\colon -x_{4}\colon -x_{5}\colon -x_{6}).
\end{split}
\end{equation}
We define the group $G=<\sigma_{1},\sigma_{2}> \subset \Aut(\mathbb{P}^{6})$.
\end{defn}
\begin{rem}
Formulas \eqref{eq:act} describe an action of $G\cong(\mathbb{Z}/_{2\mathbb{Z}})^{2}$ on $H^{0}(\mathbb{P}^{6},\mathcal{O}_{\mathbb{P}^{6}}(1)))$, and therefore on $H^{0}(\mathbb{P}^{6},\mathcal{O}_{\mathbb{P}^{6}}(d)))$ for any $d\in \mathbb{N}$, which is compatible with the action of $G$ on $\mathbb{P}^{6}$.

For this action we have:
\begin{equation}
\label{eq:desc}
W:=H^{0}(\mathbb{P}^6,\mathcal{O}_{\Pj^{6}}(2))^{G}
=\langle x_{0}^{2},x_{1}^{2},x_{2}^{2},x_{3}^{2},x_{4}^{2},x_{5}^{2}, x_{6}^{2},x_{1}x_{2},x_{3}x_{4},x_{5}x_{6}\rangle_{\mathbb{C}}\cong \mathbb{C}^{10}.
\end{equation}
\end{rem}

\begin{defn}
We define 
\[
	\widetilde{U}\subset \Grass(4,W)/_{\mathrm{GL}(7,\mathbb{C})^{G}}
\]
to be the open set that parametrizes the complete intersection of four quadrics $V=\bigcap_{i=0}^{3}Q_{i}$ with  $Q_{0},\dots,Q_{3}\in H^{0}(\mathbb{P}^6,\mathcal{O}_{\Pj^{6}}(2))^{G}$. 
\end{defn}

\begin{rem}
We are considering all the four--dimensional subspaces in $W\cong\mathbb{C}^{10}$, so we are taking four quadrics in $W=H^{0}(\mathbb{P}^6,\mathcal{O}_{\Pj^{6}}(2))^{G}$ that are linearly independent. Then we are quotienting by
\[
	GL(7,\mathbb{C})^{G}=\{f\in GL(7) \hbox{ such that } \forall g\in G \, f\circ g=g\circ f\}.
\]
Since we can see $G$ as a subgroup of $GL(7,\mathbb{C})$, we can also consider the subgroup of the invariants, i.e. $GL(7,\mathbb{C})^{G}$ which acts naturally on $H^{0}(\mathbb{P}^{6},\mathcal{O}_{P}^{6}(1))$ with basis $(x_{0},\dots, x_{6})$. So we have an induced action of $GL(7,\mathbb{C})^{G}$ on $H^{0}(\mathbb{P}^6,\mathcal{O}_{\Pj^{6}}(d))$ for any $d\in \mathbb{N}$. In particular, we can consider its induced action  on $H^{0}(\mathbb{P}^6,\mathcal{O}_{\Pj^{6}}(2))$. Thus we have an action of $GL(7,\mathbb{C})^{G}$ on $W$, which induces an action of $GL(7,\mathbb{C})^{G}$ on $\Grass(4,W)$.
\end{rem}

\begin{defn}
\label{defn:Grass}
We define $U\subset \Grass(4,W)/_{\mathrm{GL}(7)^{G}}$ to be the open set that contains only the smooth complete intersections 
\[
	V=\bigcap_{i=0}^{3}Q_{i} \text{ with } Q_{0},\dots,Q_{3}\in H^{0}(\mathbb{P}^6,\mathcal{O}_{\Pj^{6}}(2))^{G}
\]
such that $V\cap \mathrm{Fix}_{G}=\varnothing$, so that the action of $G$ on $V$ is free.
So we consider the following situation: $\mathcal{U}\overset{p}{\rightarrow} U\subset  \Grass(4,W)/_{\mathrm{GL}(7)^{G}}$, where 
\[
	\mathcal{U}:=\left\{\Bigl([Q_{0}, Q_{1}, Q_{2}, Q_{3}],x\Bigr)\in U\times 
	\mathbb{P}^{6}\colon x\in \bigcap_{i=0}^{3}Q_{i} \right\}
	\subset U\times \mathbb{P}^{6},
\]
and $V=V_{u}\cong p^{-1}(u)=\{u\}\times V_{u}$ for some $u\in U$.\\
We define $S:=V/G$ to be the quotient under the action of $G$. 
\end{defn}

\begin{rem}
We have the following numerical situation
\[
\begin{cases*}
	\chi(V)=8,\\
	q(V)=0,\\
	p_{g}(V)=\chi(V)-1+q(V)=7,\\
	K^{2}_{V}=16.
\end{cases*}
\]
We notice that $S$ is smooth because the action of $G$ is free, and by \cite[Lemma VI.11, Examples VI.12]{beauville1996complex} we get: 
\[
	K^{2}_{S}=4, q(S)=0, \chi(\mathcal{O}_{S})=2, p_{g}(S)=1.
\]
\end{rem}

\begin{rem}
In Definition \ref{defn:Grass}, we ask that $V\cap\Fix(G)=\varnothing$. This is possible since the fixed locus has dimension one. Indeed we have that each involution fixes a $\mathbb{P}^{2}$ and a $\mathbb{P}^{3}$, in particular:
	\begin{itemize}
		\item $\Fix(\sigma_{1})=\mathbb{P}^{2}_{x_{0},x_{5},x_{6}}\sqcup 
			\mathbb{P}^{3}_{x_{1},x_{2},x_{3},x_{4}}$
		\item $\Fix(\sigma_{2})=\mathbb{P}^{2}_{x_{0},x_{1},x_{2}}\sqcup 
			\mathbb{P}^{3}_{x_{3},x_{4},x_{5},x_{6}}$
		\item $\Fix(\sigma_{1}\circ \sigma_{2})=\mathbb{P}^{2}_{x_{0},x_{3},x_{4}}
		\sqcup \mathbb{P}^{3}_{x_{1},x_{2},x_{5},x_{6}}$
	\end{itemize} 
	\begin{align*}
	\Rightarrow \Fix(G)&=\Fix(\sigma_{1})\cap\Fix(\sigma_{2})\cap 
	\Fix(\sigma_{1}\circ\sigma_{2})\\
	&=\{(1:0:0:0:0:0:0)\}\sqcup 
	\mathbb{P}^{1}_{x_{5},x_{6}}\sqcup\mathbb{P}^{1}_{x_{1},x_{2}}\sqcup
	\mathbb{P}^{1}_{x_{3},x_{4}}\sqcup \mathbb{P}^{1}_{x_{5},x_{6}}.
	\end{align*}
\end{rem}


\begin{prop}
\label{prop:desc}
Let $G\cong(\mathbb{Z}/_{2\mathbb{Z}})^{2}$ be the group
of automorphisms of $\mathbb{P}^{6}$ acting as in \eqref{eq:act} and let $V$ be a smooth complete intersection of four quadrics $Q_{0},Q_{1},Q_{2},Q_{3}\in W=H^{0}(\mathbb{P}^{6},\mathcal{O}_{\Pj^{6}}(2))^{G}$ contained in $U$.
 Then the quotient surface $V/G$ is a Todorov surface of type $(2,12)$.
\end{prop}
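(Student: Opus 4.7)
The plan is to verify the four defining properties of a Todorov surface with fundamental invariants $(\alpha,k)=(2,12)$ in turn, exploiting freeness of the $G$-action and the explicit form \eqref{eq:act} throughout.

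First I would check the numerical invariants of $V$ and $S=V/G$. Since $V\subset \Pj^{6}$ is a smooth complete intersection of multi-degree $(2,2,2,2)$, adjunction gives $K_{V}=\mathcal{O}_{V}(1)$, so $V$ is a smooth minimal surface of general type with $K_{V}^{2}=16$, $q(V)=0$, and $p_{g}(V)=h^{0}(\mathcal{O}_{V}(1))=7$. Condition (ii) in the set-up of the family ensures that $G$ acts freely on $V$, hence $S=V/G$ is smooth with $K_{S}$ ample and $K_{S}^{2}=16/|G|=4$, so $k=8+K_{S}^{2}=12$. Passing to $G$-invariants gives $q(S)=\dim H^{0}(V,\Omega_{V}^{1})^{G}=0$, and inspecting the $G$-action on $H^{0}(V,K_{V})=\langle x_{0},\dots,x_{6}\rangle$ shows that only $x_{0}$ is fixed, so $p_{g}(S)=1$.

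Next I would compute the torsion invariant $\alpha$. Smooth complete intersections of dimension at least two in projective space are simply connected by Lefschetz, so the free quotient $V\to S$ is universal and $\pi_{1}(S)\cong G\cong (\mathbb{Z}/2\mathbb{Z})^{2}$. Therefore $H_{1}(S,\mathbb{Z})\cong G$, and by universal coefficients $\Pic(S)_{\mathrm{tors}}\cong H^{2}(S,\mathbb{Z})_{\mathrm{tors}}\cong G$, giving $\alpha=2$.

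Finally I would produce the bicanonical involution and identify the associated K3. The involution $\sigma$ of \eqref{eq:act} commutes with $G$ (all four are diagonal) and preserves $V$ because every generator of $W$ listed in \eqref{eq:desc} is $\sigma$-fixed (each quadratic monomial has even total degree in the sign-flipping variables $x_{1},\dots,x_{6}$), so it descends to an involution $\overline{\sigma}$ on $S$. Pulling back identifies $H^{0}(S,2K_{S})$ with the $G$-invariant part of $H^{0}(V,\mathcal{O}_{V}(2))=W/\langle Q_{0},Q_{1},Q_{2},Q_{3}\rangle$, on which $\sigma$ acts trivially, so $\phi_{|2K_{S}|}$ factors through $\overline{\sigma}$. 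Writing $\widetilde{G}:=\langle G,\sigma\rangle\cong(\mathbb{Z}/2\mathbb{Z})^{3}$, we have $S/\overline{\sigma}=V/\widetilde{G}$. The hard part will be showing that the singularities of $V/\widetilde{G}$ are rational double points and that its minimal resolution is a K3 surface: for this I would carry out the fixed-point analysis of each coset representative $g\sigma\in \widetilde{G}\setminus G$ on $\Pj^{6}$, intersect each fixed locus with $V$ using conditions (i)--(iii), and argue locally that the resulting quotient singularities are of ADE type. Combined with the triviality of the canonical bundle of the quotient---forced by the $\sigma$-fixedness of the unique $G$-invariant canonical section $x_{0}$---and with $p_{g}=1$, $q=0$, this identifies the minimal resolution of $S/\overline{\sigma}$ as a K3 with rational double points, completing the verification of Definition \ref{defn:todorov} and of membership in Morrison's family of type $(2,12)$.
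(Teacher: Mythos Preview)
Your outline of the numerical invariants and of the bicanonical factorisation through $\overline{\sigma}$ agrees with the paper, and your computation of $\alpha=2$ via $\pi_1(S)\cong G$ is a welcome addition that the paper leaves implicit. The divergence is in the K3 step. The paper argues globally rather than locally: the ten monomials generating $W$ define a degree-$8$ map $\psi\colon V\to\Pj^{9}$ which is precisely the quotient by $H=\langle G,\sigma\rangle\cong(\mathbb{Z}/2\mathbb{Z})^3$; one checks that $\psi(\Pj^6)$ is cut out by the three quadrics $z_{12}^2=z_1z_2$, $z_{34}^2=z_3z_4$, $z_{56}^2=z_5z_6$, and intersecting with the $\Pj^5\subset\Pj^9$ determined by the four linear relations $Q_0,\dots,Q_3$ realises $S/\overline{\sigma}=V/H$ directly as a $(2,2,2)$ complete intersection in $\Pj^5$. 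Then $K=0$ by adjunction and the singularities are at worst nodal, so no fixed-point analysis is needed and one obtains the explicit projective K3 model for free.

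Your local route is also viable, but the phrase ``triviality of the canonical bundle of the quotient---forced by the $\sigma$-fixedness of $x_0$'' is not yet a justification: $\sigma$-invariance of the canonical section by itself only gives $p_g(S/\overline{\sigma})=1$, not $K=0$ (a free involution would already be a counterexample). The missing observation is that the fixed \emph{curve} of $\overline{\sigma}$ on $S$ is exactly the canonical curve $\{x_0=0\}$, because $\sigma$ fixes the hyperplane $\{x_0=0\}\subset\Pj^6$ pointwise, while the remaining cosets $g\sigma$ with $g\neq 1$ have fixed loci on $V$ consisting generically only of isolated points. Hence the ramification divisor of $f\colon S\to S/\overline{\sigma}$ satisfies $R\sim K_S$, and $K_S=f^*K_{S/\overline{\sigma}}+R$ then forces $K_{S/\overline{\sigma}}\sim 0$. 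Once this is made explicit your argument closes; the paper's route is shorter and has the bonus of exhibiting the K3 concretely.
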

In order to prove this result, we need to prove that the involution $\sigma$ on $V/G$ is such that the quotient is a (singular) K3 surface so that the bicanonical map of $V/G$ factors through it.\\
Since $\sigma$ acts as the identity on $W$ and, in particular, on the equation defining $V$, 
we have that $\sigma \in \Aut(V)$. Moreover, $\sigma$ commutes with $G$, so we can consider its action on the quotient, i.e. $\sigma[p]=[\sigma(p)]$ is well defined for any $[p]\in V/G=S$.

Then, Proposition \ref{prop:desc} follows directly from the following result.

\begin{lem}
The quotient surface $S/\sigma=(V/G)/\sigma$ is a K3 surface with at most nodes as singularities  and the bicanonical map of $S$ factors through it.
\end{lem}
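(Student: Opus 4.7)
The lemma has two assertions, and I would prove them in turn. For the factorization of the bicanonical map, it suffices to show that $\sigma$ acts trivially on $H^{0}(S, 2K_{S})$. Since the quotient $V\to S$ is étale, this space is the $G$-invariant part of $H^{0}(V, \mathcal{O}_{V}(2))$, i.e. the quotient $W/\langle Q_{0},\ldots,Q_{3}\rangle$. Every element of the explicit basis of $W$ in \eqref{eq:desc} is a quadratic monomial with even total degree in $x_{1},\ldots,x_{6}$, hence $\sigma$ acts as the identity on $W$, and the factorization is automatic.

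For the K3 part, I would first pin down the fixed locus of $\sigma$ on $S=V/G$. Because $\sigma$ commutes with each element of $G$, a point $[p]\in S$ is $\sigma$-fixed iff $p\in V$ is fixed by $g\sigma$ for some $g\in G$. The fixed locus of $\sigma$ itself on $\mathbb{P}^{6}$ splits as $\{(1:0:\cdots:0)\}\sqcup\{x_{0}=0\}$, the isolated point being excluded from $V$ by condition ii); the fixed locus of each of the three elements $g\sigma$ with $g\in G\setminus\{e\}$ splits as a linear $\mathbb{P}^{4}$ together with a $\mathbb{P}^{1}$ contained in $\mathrm{Fix}_{G}$, which is again excluded from $V$ by ii). Using that $G$ acts freely on $V$, the fixed locus of $\sigma$ on $S$ is therefore the smooth curve $R/G$, where $R:=V\cap\{x_{0}=0\}$, together with finitely many isolated fixed points.

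I would then identify $S/\sigma$ as a K3 with at most nodes. At each isolated fixed point $\sigma$ must act on the tangent plane as $-\mathrm{id}$ (any $+1$ eigenvalue would extend the fixed curve), so locally the quotient looks like $\mathbb{C}^{2}/(\pm 1)$ and acquires an $A_{1}$ node; away from these points and from $R/G$ the quotient is smooth. For the canonical class, write the two covers $V\xrightarrow{f} S\xrightarrow{\pi} S/\sigma$ and combine $K_{V}=f^{*}K_{S}$ (étale) with the double-cover formula $K_{S}=\pi^{*}K_{S/\sigma}+R/G$; since $K_{V}=\mathcal{O}_{V}(1)$ by adjunction and $R$ is a hyperplane section, one obtains $(\pi\circ f)^{*}K_{S/\sigma}=K_{V}-R\sim 0$, so by injectivity of pullback on rational Picard for a finite surjective map, $K_{S/\sigma}$ is numerically trivial. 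The remaining invariants are $q(S/\sigma)\leq q(S)=0$ and $p_{g}(S/\sigma)=1$, the latter because $H^{0}(S,K_{S})$ is generated by the class of $x_{0}$ and $\sigma$ fixes $x_{0}$; both invariants are preserved by the crepant resolution of the nodes, giving a K3 surface.

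The main obstacle I expect is the careful bookkeeping of the fixed loci of the three ``extra'' involutions $g\sigma$ on $\mathbb{P}^{6}$ and the use of condition ii) to exclude the $\mathbb{P}^{1}$ components of their fixed loci from $V$, so that only isolated fixed points survive on $S$. Once this is settled, the crucial numerical input $R\sim K_{V}$ that makes the canonical class cancel is just the adjunction formula for a complete intersection of four quadrics in $\mathbb{P}^{6}$, requiring no further genericity.
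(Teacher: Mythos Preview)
Your approach differs substantially from the paper's. The paper never analyses the fixed locus of $\sigma$ on $S$; instead it writes down the map
\[
\psi\colon \mathbb{P}^{6}\longrightarrow\mathbb{P}^{9},\qquad (x_{0}:\cdots:x_{6})\longmapsto (x_{0}^{2}:\cdots:x_{6}^{2}:x_{1}x_{2}:x_{3}x_{4}:x_{5}x_{6}),
\]
observes that $\psi$ is precisely the quotient by $H=\langle G,\sigma\rangle\cong(\mathbb{Z}/2)^{3}$, and shows that $\psi(V)=S/\sigma$ sits inside the $\mathbb{P}^{5}\subset\mathbb{P}^{9}$ cut out by the four linear forms coming from $Q_{0},\dots,Q_{3}$ as the complete intersection of the three quadrics $z_{12}^{2}=z_{1}z_{2}$, $z_{34}^{2}=z_{3}z_{4}$, $z_{56}^{2}=z_{5}z_{6}$. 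Adjunction then gives $K=\mathcal{O}$ and $q=0$ for free. This buys an explicit projective model of the associated K3, which is what one wants downstream; your Hurwitz-and-invariants argument is more intrinsic but does not produce that model.

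There is, however, a genuine gap in your argument. You claim that condition ii) forces the fixed locus of each $g\sigma$ (with $g\neq e$) to meet $V$ only in isolated points, but ii) only excludes the $\mathbb{P}^{1}$-component of $\mathrm{Fix}(g\sigma)$; the $\mathbb{P}^{4}$-component $\{x_{5}=x_{6}=0\}$ (and its two analogues) is \emph{not} contained in $\mathrm{Fix}_{G}$, and nothing you have said prevents $V$ from meeting it in a curve $C$. Smoothness of $V$ alone does not rule this out: at a point of such a $C$ the differential of $g\sigma$ on $T_{p}V$ simply has eigenvalues $(+1,-1)$, which is perfectly compatible with $V$ being smooth. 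If such a $C$ exists, your Hurwitz computation becomes $(\pi f)^{*}K_{S/\sigma}=K_{V}-R-C$, which is no longer trivial, and the argument for $K_{S/\sigma}\equiv 0$ collapses. The paper's route sidesteps this entirely because adjunction on the explicit complete intersection gives $K=0$ without ever naming the ramification divisor; in fact, once one knows $K_{V/H}=0$ from the paper's description, Hurwitz \emph{a posteriori} forces the divisorial ramification to be exactly $R$, hence each $V\cap\mathbb{P}^{4}$ is indeed finite---but you would need an independent argument for this if you want your proof to stand on its own.
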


\begin{proof}
We consider the bicanonical maps of $V$ and $V/G$. Since $V$ is a complete intersection surface, we have that $h^{1}(2K_{V} ) = h^{1}(\mathcal{O}_{V} (2))=0$. 
Since $K_{V}$ is ample, by Serre's duality we get also $h^{2}(2K_{V})=0$. In particular, by Riemann--Roch Theorem, we have
\[
	P_{2}(V):=h^{0}(2K_{V})=\chi(\mathcal{O}_{V}(2K_{V}))=K^{2}_{V}+\chi(\mathcal{O}_{V})=24.
\]
So the bicanonical map is $\phi_{2K_{V}}\colon V\to \mathbb{P}^{23}$.\\

Since $K_{V} \cong \mathcal{O}_{V} (1)$ is ample and $q \colon V\to S$ is finite and ètale, then $K_{S}$ is ample by Nakai-Moishezon criterion \cite[Ch. V, Theorem 1.10]{hartshorne2013algebraic}) and $q^{*}K_{S} \cong K_{V}$. So by Mumford’s vanishing theorem $h^{1}(2K_{S}) = 0$.
Since $h^{0}(K_{S}) = p_{g}(S) > 0$ and $K_{S}$ is ample, by Serre duality then $P_{2}(S)=6$ by Riemann-Roch theorem.

We have the following commutative diagram:
\[
	\begin{tikzcd}
		V\arrow[r,"\phi_{2K_{V}}"] \arrow[d, "q"] \arrow[drr,dashrightarrow, "\psi"']
		& \mathbb{P}^{23}\arrow[d,dashrightarrow]\arrow[dr,dashrightarrow]\\
		S=V/G \arrow[r,"\phi_{2K_{S}}"']
		&\mathbb{P}^{5}\arrow[r,hookrightarrow]
		&\mathbb{P}^{9}.
	\end{tikzcd}
\]
Then it holds that
\[
	H^{0}(2K_{V/G})=H^{0}(2K_{V})^{G}=\langle x_{0}^{2},\dots, x_{6}^{2},x_{1}x_{2},x_{3}x_{4},x_{5}x_{6}\rangle_{\mathbb{C}}\mod H^{0}(I_{V}(2)).
\]
It is convenient to look at the bicanonical image in $\mathbb{P}^{9}$, so we study the map 
\begin{align*}
	\psi\colon \mathbb{P}^{6}&\longrightarrow \mathbb{P}^{9}\\
	(x_{0}\colon \dots \colon x_{6})&\mapsto(x_{0}^{2}\colon \dots \colon x_{6}^{2}\colon x_{1}x_{2}\colon x_{3}x_{4}\colon x_{5}x_{6}).
\end{align*}
The map $\psi$ is given by the chosen monomial quadrics. Since $V$ is the complete intersection of four quadrics in this system, the restrictions of the quadrics of $W$ to $V$ are elements of $H^{0}(2K_{V} )^{G}\cong \mathbb{C}^{6}$ which is isomorphic to $H^{0}(2K_{V/G})$. So, we get that $\psi(V)\subset \mathbb{P}^{5}$, and this $\mathbb{P}^{5}$ is defined by the 4 linear equations in $\mathbb{P}^{9}$ given by those quadrics defining $V$.\\


We consider the subgroup $H:=\langle\sigma,\sigma_{1},\sigma_{2}\rangle\cong \left(\mathbb{Z}/_{2\mathbb{Z}}\right)^{3}$ of automorphisms of $\mathbb{P}^{6}$ of order 8 and we notice that the involution $\sigma$ commutes with $G=\langle\sigma_{1},\sigma_{2}\rangle$. Since $W$ (hence the equations defining $V$ ) is $\sigma$--invariant, then it is also $H$--invariant, so $H$ is a group of automorphisms of $V$. Moreover, since the action on $\mathbb{P}^9$ is deduced from the action on $\mathbb{P}^6$, we have that $\psi$ is $H$--equivariant, and so is its restriction $\psi\vert_{V}\colon V\to \mathbb{P}^{5}$. Next we notice that $H$ acts trivially on $\mathbb{P}^{9} \cong \mathbb{P}(W^{*})$, hence on $\mathbb{P}^{5}\cong \mathbb{P}((W/H^{0}(I_{V}(2)))^{*})$. Thus the equivariant map $\psi\vert_{V}$ factors through the quotient by $H\colon V\to V/H\to \mathbb{P}^{5} /H\cong \mathbb{P}^{5}$ .

A standard direct computation of the fibers shows that $\psi$ is finite of degree 8. It follows that $\psi$ is the quotient by $H$. Then we have a commutative diagram
\[
	\begin{tikzcd}
		V \arrow[r,  "\psi"]\arrow[d,  "q"]&V/H\\
		V/G\arrow[ur,  "r"]
	\end{tikzcd}
\]
where $r$ is the quotient by the involution induced by $\sigma$.\\
We want to show that $\psi(V )$ is a K3 surface with ordinary double points. That it has only ordinary double points as singularities follows from the fact that it is a quotient of a 
smooth surface by an involution. In particular, we claim that it is a complete intersection of three quadrics in $\mathbb{P}^{5}$. Indeed, in such a case we have that by Adjunction formula 
\begin{equation}
\label{eq:can}
	K_{\psi(V)}=\mathcal{O}_{\psi(V)}(-6+2+2+2)=\mathcal{O}_{\psi(V)}.
\end{equation}
Moreover, $\psi(V ) = V/H$ being a complete intersection, we will have $h^{1}(\mathcal{O}_{\psi(V )}) = 0$,  and so $\psi(V)$ is a K3 surface. \\

Now let us prove that $\psi(V)$ is a complete intersection of three quadrics in $\mathbb{P}^{5}$.
To ease the notation, we name the coordinates in $\mathbb{P}(H^{0}(2K_{V}/G))$ as
\[
	z_{0}=x_{0}^{2},\dots, z_{6}=x_{6}^{2}, z_{12}=x_{1}x_{2},z_{34}=x_{3}x_{4},z_{56}=x_{5}x_{6}.
\]
Then the image of $\psi\colon \mathbb{P}^{6}\to \mathbb{P}^{9}$ has dimension 6 and 
\begin{equation*}
	\psi(\mathbb{P}^{6})=\{(z_{0},\dots,z_{6},z_{12},z_{34},z_{56})\in \mathbb{P}^{9}\colon
	 z_{12}^{2}=z_{1}z_{2}, \, z_{34}^{2}=z_{3}z_{4}, z_{56}^{2}=z_{5}z_{6}\}.
\end{equation*}
Indeed, $\psi(\mathbb{P}^{6})$ is contained in this locus. Since  the intersection of these three quadrics defines an irreducible 6--dimensional variety which is complete intersections, this is indeed $\psi(\mathbb{P}^{6})$. When we restrict to $V$, we get that $\psi(V)$ is a complete intersection of three quadrics and four linear forms in $\mathbb{P}^{9}$ given by the four quadrics defining $V$. 
\end{proof}

\begin{prop}
\label{prop:dim}
The family of Definition \ref{defn:Grass} of Todorov surfaces of type (2,12) is 12--dimensional.
\end{prop}
In order to prove this result we need a preliminary lemma on the dimension of the stabilizer.

\begin{lem}
\label{lem:stab}
A generic point in the Grassmanian $\Grass(4, W )$ has a 1--dimensional stabilizer.
\end{lem}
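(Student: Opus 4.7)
The plan is to bound the generic stabilizer from above and below.

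For the lower bound, consider the center $Z = \{\lambda I_7 : \lambda \in \mathbb{C}^*\} \subset \mathrm{GL}(7,\mathbb{C})^G$: the matrix $\lambda I_{7}$ acts on every degree-$2$ form in $W$ by the scalar $\lambda^{2}$ and therefore fixes every linear subspace of $W$. In particular $Z \subset \mathrm{Stab}(\Lambda)$ for every $\Lambda \in \Grass(4,W)$, which gives $\dim \mathrm{Stab}(\Lambda) \geq 1$.

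For the matching upper bound I would pass to Lie algebras. From the block form \eqref{eq:matrix1} one has $\mathfrak{gl}(7,\mathbb{C})^{G} \cong \mathbb{C} \oplus \mathfrak{gl}(2)^{\oplus 3}$ of dimension $13$, and
\[
W = \langle x_0^2\rangle \oplus \mathrm{Sym}^2\langle x_1,x_2\rangle \oplus \mathrm{Sym}^2\langle x_3,x_4\rangle \oplus \mathrm{Sym}^2\langle x_5,x_6\rangle
\]
is the decomposition into $\mathrm{GL}(7,\mathbb{C})^{G}$-stable summands, with the three $\mathfrak{gl}(2)$-factors acting through the symmetric square representation on their own $\mathrm{Sym}^{2}$-block and trivially on the others. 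The derivative of the action at $\Lambda$ is
\[
d\mu_{\Lambda} \colon \mathfrak{gl}(7,\mathbb{C})^{G} \longrightarrow T_{\Lambda}\Grass(4,W) \cong \mathrm{Hom}(\Lambda, W/\Lambda), \qquad \xi \longmapsto \bigl(v \mapsto \overline{\xi \cdot v}\bigr),
\]
whose kernel is the infinitesimal stabilizer of $\Lambda$. Since $I_{7}$ lies in $\ker d\mu_{\Lambda}$ for every $\Lambda$, one always has $\rank d\mu_{\Lambda} \leq 12$, so by lower semi-continuity of rank it suffices to exhibit a single $\Lambda_{0}$ with $\rank d\mu_{\Lambda_{0}} = 12$.

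To do so I would choose $\Lambda_{0} = \langle Q_{1}, Q_{2}, Q_{3}, Q_{4}\rangle$ for four quadrics with generic nonzero contribution in each of the four block-summands of $W$, and verify that the linear system
\[
\xi \cdot Q_{i} \equiv 0 \pmod{\langle Q_{1}, \ldots, Q_{4}\rangle}, \qquad i = 1, \ldots, 4,
\]
in the $13$ unknowns $\xi = (a, B_{1}, B_{2}, B_{3}) \in \mathbb{C} \oplus \mathfrak{gl}(2)^{\oplus 3}$ has only the scalar solutions $\xi \in \mathbb{C} \cdot I_{7}$. The main obstacle is that, although the ambient action on $W$ is block-diagonal, the constraints cut out by the four $Q_{i}$ couple the three $\mathfrak{gl}(2)$-blocks through the mixing of their components; the computation is therefore not itself block-diagonal. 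Nevertheless it is a finite-dimensional linear-algebra check, and for a sufficiently generic (or conveniently explicit) $\Lambda_{0}$ the resulting $13 \times 24$ coefficient matrix has rank exactly $12$, yielding $\dim \ker d\mu_{\Lambda_{0}} = 1$ and, by semi-continuity, $\dim \mathrm{Stab}(\Lambda) = 1$ for generic $\Lambda$, as claimed.
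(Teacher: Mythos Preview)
Your strategy matches the paper's exactly: the center $\mathbb{C}^{*}I_{7}$ gives the lower bound, and upper semi-continuity of stabilizer dimension reduces the upper bound to exhibiting a single $\Lambda_{0}$ with one-dimensional stabilizer. The only difference is cosmetic—the paper works at the group level (asking which $M\in\mathrm{GL}(7)^{G}$ preserve $\Lambda_{0}$) rather than passing to Lie algebras, but this changes nothing substantive.

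The gap is that you stop precisely where the actual work begins. Saying ``choose four quadrics with generic nonzero contribution in each block'' and ``the resulting $13\times 24$ matrix has rank $12$'' is not a proof; it is a restatement of what needs to be checked. The paper, by contrast, writes down four completely explicit quadrics
\[
Q_{0}=x_{0}^{2}+x_{1}^{2}+x_{3}^{2}+x_{5}^{2},\quad Q_{1}=x_{2}^{2}+x_{4}^{2}+x_{6}^{2},\quad Q_{2}=x_{1}x_{2}+x_{3}x_{4}+x_{5}^{2},\quad Q_{3}=x_{3}x_{4}+x_{5}x_{6},
\]
and then traces through the conditions $M\cdot Q_{i}\in\langle Q_{0},\dots,Q_{3}\rangle$ in a specific order ($Q_{1},Q_{3},Q_{2},Q_{0}$) to force the off-diagonal entries of the blocks $M_{1},M_{2},M_{3}$ to vanish and the diagonal entries to agree. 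This explicit computation \emph{is} the lemma; your proposal would be complete once you supply it (whether at the group or Lie-algebra level).
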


\begin{proof}
First of all, we notice that a multiple of the identity matrix $\lambda I$ with $\lambda \in \mathbb{C}^{*}$ acts trivially. So the generic stabilizer has dimension greater or equal to one. Since $\{x\in \Grass(4, W ), \,\dim\left(\mathrm{stab}_{GL(7)^{G}} (x)\right) \geq l\}$ are Zariski closed subsets for $l \geq 0$, it is enough to find a point in the Grassmanian which has 1–dimensional stabilizer to prove the claim.\\
Let us consider the point in $\Gr(4,10)$ given by the following four quadrics in $H^{0}(\mathbb{P}^6,\mathcal{O}_{\Pj^{6}}(2))^{G}$:
\begin{align*}
	Q_{0}&=\{(x_{0},\dots, x_{6})\in \mathbb{P}^{6} 
		\colon x_{0}^{2}+x_{1}^{2}+x_{3}^{2}+x_{5}^{2}=0\};\\	
	Q_{1}&=\{(x_{0},\dots, x_{6})\in \mathbb{P}^{6} 
		\colon x_{2}^{2}+x_{4}^{2}+x_{6}^{2}=0\};\\
	Q_{2}&=\{(x_{0},\dots, x_{6})\in \mathbb{P}^{6} 
		\colon x_{1}x_{2}+x_{3}x_{4}+x_{5}^{2}=0\};\\
	Q_{3}&=\{(x_{0},\dots, x_{6})\in \mathbb{P}^{6} 
		\colon x_{3}x_{4}+x_{5}x_{6}=0\}.
\end{align*}
Since any element of a finite subgroup of $GL(7, \mathbb{C})$ is diagonalizable,
asking to commute with the group $G$ for an element $f\in \mathrm{GL}(7)$ is equivalent to ask for $f$ to preserve the eigenspaces.
Let us denote $G\cong (\mathbb{Z}/_{2\mathbb{Z}})^{2}= \mathbb{Z}/_{2\mathbb{Z}}e_{1}\oplus\mathbb{Z}/_{2\mathbb{Z}}e_{2}, 
	G^{*}\cong \mathbb{Z}/_{2\mathbb{Z}}\epsilon_{1}\oplus \mathbb{Z}/_{2\mathbb{Z}}\epsilon_{2}$.
When we look at the action of $G$ on $H^{0}(\mathcal{O}_{\mathbb{P}^{6}}(1))$, we have a decomposition into irreducible components on the characters: $W_{\chi_{0}}\oplus 2W_{\chi_{\epsilon_{1}}}\oplus 2W_{\chi_{\epsilon_{2}}}\oplus 2W_{\chi_{\epsilon_{1}+\epsilon_{2}}}$. Then, $\{x_{0}\}$ generates $W_{\chi_{0}}$, $\{x_{1},x_{2}\}$ is a basis of $2W_{\chi_{\epsilon_{1}}}$, $\{x_{3},x_{4}\}$ is a basis of $2W_{\chi_{\epsilon_{2}}}$ and $\{x_{5},x_{6}\}$ is a basis of $2W_{\chi_{\epsilon_{1}+\epsilon_{2}}}$.\\
So a general $M\in \mathrm{GL}(7,\mathbb{C})^{G}$ is a matrix of the type
\begin{equation}
\label{eq:matrix1}
	M=\begin{pmatrix}
	a&0&\hdotsfor{4}&0\\
	0&b&c&0&\hdotsfor{2}&0\\
	0&d&e&0&\hdotsfor{2}&0\\
	0&\dots&0&f&g&0&0\\
	0&\dots&0&h&i&0&0\\
	0&\hdotsfor{3}&0&l&m\\
	0&\hdotsfor{3}&0&n&r\\
	\end{pmatrix},
\end{equation}
for some coefficients $a,b,c,d,e,f,g,h,i,l,m,n,r \in \mathbb{C}$.\\
Expand the matrix $M$ in the centralizer of $G$ in $\mathrm{GL}(W)$ according to the character decomposition of $W$ as $M = (a, M_{1}, M_{2}, M_{3})$. 
Then considering the action of M consecutively on $Q_{1}, Q_{3}, Q_{2}, Q_{0}$ one finds that $M$ fixes $Q_{i}$ up to a non-zero constant, all upper diagonal element are zero, all lower diagonal elements of $M_{2}$ and $M_{3}$ are zero, and finally $M_{2} = \lambda I$. So the projective automorphism group of the generic intersection of quadrics is a finite group.

\end{proof}

%

\begin{proof} of Proposition \ref{prop:dim}.
 Let $[V ], [V ′] \in U$ be two complete intersections such that 
$V /G \cong V ′ /G$. Then this isomorphism lifts to the universal covers 
$V \cong V'$ . Since we embed the universal covers in $\mathbb{P}^{6}$ via the 
canonical map, such isomorphism extends to an automorphism of 
$\mathbb{P}^{6}$ . As a conclusion, two distinct points of $U$ give rise to distinct 
isomorphism classes of Todorov surfaces and one can see $U$ as a subset 
of the moduli space of Todorov surfaces.
In order to compute the dimension of the family we are describing, we compute the dimension of the base
\[
		U\subset \Grass(4,W)/_{\mathrm{GL}(7,\mathbb{C})^{G}}.
\]
We have that the dimension of the Grassmanian variety is:
\[
	\dim \Grass(4,W)=4(10-4)=24.
\]
Let us compute now $\dim \mathrm{GL}(7,\mathbb{C})^{G}$.

We have that
\[
	\dim \mathrm{GL(7)^{G}}=1+3\cdot 2^{2}=13.
\]
We notice, however, that the action of $\mathrm{GL}(7)^{G}$ is not faithful.

By Lemma \ref{lem:stab}, we conclude that 
\begin{equation}
\label{eq:dim}
	\dim(U)=\dim \Grass(4,W)/_{ \mathrm{GL}(7)^{G}}=24-13+1=12.
\end{equation}
\end{proof}

So we have found a 12-dimensional family of Todorov surfaces of type $(2,12)$, whose general element is $S=V/G$, where $V$ is a smooth complete intersection of four linearly independent quadrics in $\mathbb{P}^{6}$ which are $G$--invariant. \\

We are finally able to prove our main result to describe the family of Todorov surfaces of type $(2,12)$.

\begin{teo}
\label{teo:desc}
Let $S$ be a general Todorov surface with fundamental invariants $(\alpha, k) = (2, 12)$.
Then the canonical model of $S$ is a quotient surface $V/G$ where $V$ is the smooth complete intersection of four independent quadrics $Q_{0},Q_{1},Q_{2},Q_{3}\in H^{0}(\mathbb{P}^{6},\mathcal{O}_{\Pj^{6}}(2))^{G}$ contained in $U$, and $G\cong(\mathbb{Z}/_{2\mathbb{Z}})^{2}$ is the group of automorphisms of $\mathbb{P}^{6}$ acting as above.
Conversely, any such surface $V/G$ is a Todorov surface of type $(2,12)$.
\end{teo}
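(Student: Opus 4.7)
The plan is to reverse the construction of Proposition~\ref{prop:desc}: the ``Conversely'' part is already that proposition, so only the forward direction needs proof. The idea is to recover the embedding $V\subset\mathbb{P}^{6}$ and the $G$-action directly from the étale universal cover of $S$.

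First I would let $S$ be a general Todorov surface of type $(2,12)$, replaced by its canonical model, and form the connected étale Galois cover $\pi\colon V\to S$ corresponding to the order-$2^{\alpha}=4$ subgroup of $\Pic(S)[2]$; its Galois group is $G\cong(\mathbb{Z}/2\mathbb{Z})^{2}$ acting freely on $V$. From $K_{S}^{2}=k-8=4$, $\chi(\mathcal{O}_{S})=2$, $q(S)=0$, multiplicativity of these invariants under étale covers, together with the standard vanishing $h^{1}(S,L)=0$ for the nontrivial 2-torsion line bundles $L$ on a general Todorov surface, I obtain
\[
K_{V}^{2}=16,\qquad \chi(\mathcal{O}_{V})=8,\qquad q(V)=0,\qquad p_{g}(V)=7.
\]
Next I would show that the canonical map $\varphi_{|K_{V}|}\colon V\to \mathbb{P}^{6}$ is an embedding whose image is cut out as a complete intersection of four independent quadrics. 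This is the same Lefschetz/Koszul-type analysis used for analogous étale covers in \cite{bini2014new,neves2011unprojection}; the consistent numerical input is $p_{g}(V)=7$, $K_{V}$ ample with $K_{V}^{2}=16$, and $P_{2}(V)=K_{V}^{2}+\chi(\mathcal{O}_{V})=24$, matching exactly four quadric relations in $\mathbb{P}^{6}$. The hard part will be precisely this embedding-and-complete-intersection statement, which is the place where the genericity of $S$ enters (to kill degenerate syzygies and exclude $G$-fixed points on $V$).

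The $G$-action on $V$ then lifts to a linear representation on $H^{0}(V,K_{V})\cong\mathbb{C}^{7}$, and hence to a linear action on $\mathbb{P}^{6}$ preserving $V$. To identify the representation I will compute the multiplicities $n_{\chi}$ of the four characters of $G$ via the holomorphic Lefschetz fixed-point formula. Since every nontrivial $g\in G$ acts without fixed points on $V$,
\[
\sum_{i}(-1)^{i}\tr\bigl(g^{*}\mid H^{i}(\mathcal{O}_{V})\bigr)=0;
\]
combined with $q(V)=0$, $p_{g}(V)=7$ and Serre duality this gives $\tr(g^{*}\mid H^{0}(K_{V}))=-1$ for each of the three nontrivial $g$. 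Coupled with $\sum_{\chi}n_{\chi}=7$, the resulting linear system has the unique solution $n_{\chi_{0}}=1$ and $n_{\chi}=2$ for each nontrivial character. After a compatible choice of eigenbasis, the action on $\mathbb{P}^{6}$ is identified with the one in \eqref{eq:act}, with $\{x_{0}\}$ a basis of the trivial isotypic piece and $\{x_{1},x_{2}\}$, $\{x_{3},x_{4}\}$, $\{x_{5},x_{6}\}$ bases of the three nontrivial ones.

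Finally, since $G$ preserves $V\subset\mathbb{P}^{6}$ it preserves its homogeneous ideal, so the four linearly independent quadratic generators may be chosen $G$-invariant, i.e.\ $Q_{0},\dots,Q_{3}\in W=H^{0}(\mathbb{P}^{6},\mathcal{O}_{\mathbb{P}^{6}}(2))^{G}$. Freeness of the $G$-action on $V$ yields $V\cap\mathrm{Fix}_{G}=\varnothing$, and by construction $V$ has at worst rational double points, so the quadruple $[Q_{0},\dots,Q_{3}]$ lies in $\mathcal{U}$ and $S=V/G$, which closes the reconstruction. In summary, the delicate point is the canonical-embedding step invoking \cite{bini2014new,neves2011unprojection}; the character count $(1,2,2,2)$ follows from a short fixed-point computation, and the passage from $G$-invariance of $V$ to $G$-invariance of the defining quadrics is formal.
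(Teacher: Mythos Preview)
Your approach is genuinely different from the paper's, and considerably harder. The paper does not reconstruct the cover from $S$ at all; instead it uses a pure dimension count. The converse direction (Proposition~\ref{prop:desc}) already produces a family of Todorov surfaces of type $(2,12)$ parametrized by $U\subset\Grass(4,W)/\mathrm{GL}(7)^{G}$, and the computation preceding the theorem (culminating in \eqref{eq:dim}) shows $\dim U=12$. Since the moduli space of Todorov surfaces of type $(2,12)$ is irreducible of dimension $12$ (by the cited results of Todorov, Morrison, Usui, Lee), the constructed family is dense, so a general $S$ lies in it. That is the entire forward argument.

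Your direct-reconstruction route is attractive in principle, but the step you yourself flag as ``the hard part''---that $\varphi_{|K_{V}|}$ embeds $V$ as a smooth complete intersection of four quadrics---is a genuine gap, not something that follows formally from the numerics $p_{g}(V)=7$, $K_{V}^{2}=16$, $P_{2}(V)=24$. Those numbers are consistent with four quadratic relations, but do not by themselves exclude a non-complete-intersection canonical image or a non-embedding canonical map; the references you invoke treat specific constructions rather than supplying a general criterion applicable here. A second, smaller gap is $q(V)=0$: this is equivalent to $h^{1}(S,L)=0$ for each nontrivial $L\in\Pic(S)[2]$, which by Riemann--Roch amounts to $h^{0}(K_{S}\otimes L)=2$ for each such $L$; calling this ``standard'' for a general Todorov surface does not substitute for a proof. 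The paper's moduli argument sidesteps both issues entirely. Your holomorphic Lefschetz computation of the character decomposition $(1,2,2,2)$ is correct and pleasant, but in the paper's framework it is unnecessary: once one knows the general $S$ arises as some $V/G$ in the constructed family, the action \eqref{eq:act} is fixed from the outset.
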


\begin{proof}\emph{of Theorem \eqref{teo:desc}}.\\
By Proposition \ref{prop:desc} it follows that $V/G$ is a Todorov surface of type (2,12). 
In order to prove the first part of the theorem, we use a dimensional argument. Since the number of moduli of the family of Todorov surfaces of type (2,12) is 12,  and the family is irreducible (see \cite{Todorov_1981}, \cite[Theorem 7.5]{morrison1988moduli} and \cite[Remark 5.3.5]{usui1991mixed}, \cite[Section 4.2]{lee2015deformations}), by \eqref{eq:dim} we conclude that we are describing a Zariski open set of an irreducible moduli space, so we are describing the general element of the family.
\end{proof}

\section{Cycles}
\label{sec:cycles}

By means of Theorem \ref{teo:desc}, we can give an explicit description of the family of Todorov surfaces of type $(2,12)$. Now we want to introduce a natural compactification of the parameter space for the family of Todorov surfaces of this type, which is more useful when dealing with cycles. 
\begin{defn}
\label{defn:family}
We define 
\begin{equation}
\label{eq:B}
\overline{B}:=\prod_{i=0}^{3}\Pj\left(H^{0}(\mathbb{P}^6,\mathcal{O}_{\Pj^{6}}(2))^{G}\right)\cong \Pj^{9}\times\Pj^{9}\times\Pj^{9}\times\Pj^{9}.
\end{equation}
Let $\mathcal{V}\overset{p}{\rightarrow} B$ denote the total space of the family of the complete intersections $\bigcap_{i=0}^{3}Q_{i}\subset \Pj^{6}$, where $b\in B$ and $B\subseteq \overline{B}$ is a Zariski open set which parametrizes only the smooth intersections, i.e. $\overline{B}$ is the projective closure of $B$.\\
We are in the following situation:
\[
	\mathcal{V}:=\left\{\Bigl([Q_{0}], [Q_{1}], [Q_{2}], [Q_{3}],x\Bigr)\in B\times 
	\mathbb{P}^{6}\colon x\in \bigcap_{i=0}^{3}Q_{i} \right\}
	\subset B\times \mathbb{P}^{6}.
\]
For any $b=([Q_{0}],[Q_{1}],[Q_{2}],[Q_{3}])\in B$, we define 
\[
	V_{b}:=\bigcap_{i=0}^{3}Q_{i}\cong p^{-1}(b)=\{b\}\times V_{b}.
\]
In particular, the morphism $p$ corresponds to the first projection of $B\times \mathbb{P}^{6}$ restricted to $\mathcal{V}$.
Since the action of $G\cong(\mathbb{Z}/_{2\mathbb{Z}})^{2}$ on $\overline{B}\times \mathbb{P}^{6}$ is non trivial only on the second component, we can consider its action on $\mathcal{V}$ and we get $\mathcal{S}:=\mathcal{V}/G\to B$. 
In particular, $\mathcal{S}$ is the family of canonical models of Todorov surface above $B$ and Theorem \ref{teo:desc} shows that a general Todorov can be parametrized by $B \twoheadrightarrow U$, so $B$ is a parameter space for general Todorov surfaces of type $(2,12)$. 
\end{defn}

The situation is the following
\[
			\begin{tikzcd}
			\mathcal{V}\arrow[d,"\pi"' ] &\subset B\times \mathbb{P}^{6}
			&&\mathcal{S}\arrow[d,"\overline{\pi}"' ] 
			&\subset B\times \mathbb{P}^{6}/G\\
			B &&&B,
			\end{tikzcd}
\]
and these are the tautological families. Then $\mathcal{V}$ is a smooth quasi--projective variety 
since $B$ is smooth and the fibers of $V\to B$ are smooth and of the same dimension (see \cite[Theorem 3.3.27]{schoutens2010use}).

%
%


By definition \ref{defn:todorov}, to each Todorov surface $S_{b}=V_{b}/G$ we have two associated $K3$ surfaces, one is $\overline{M_{b}}=S_{b}/\sigma$, and the other is its resolution of singularities $M_{b}=(\overline{M_{b}})^{res}$.

\begin{rem}
\label{rem:sing}
We notice that $\overline{M_{b}}$ is a singular variety with quotient singularities. In general, Chow groups of singular varieties do not admit intersection product or a ring structure. But, in our case, the identification  $A_{*}(\overline{M})_{\mathbb{Q}}\cong (A_{*}(S)_{\mathbb{Q}})^{\sigma}$ allows to work with Chow rings as usual (see \cite[Example 8.3.12]{fulton2013intersection}).\\
Moreover, if a surface $M$ is singular with finitely many quotient singularities 
with resolution $M^{res}$, $A_{0}(M)_{\mathbb{Q}}$ is canonically isomorphic to $A^{2}(M)_{\mathbb{Q}}$, the operational Chow cohomology (see \cite{fulton2013intersection}). By Kimura’s exact sequence, there is an isomorphism
$A^2(M)_{\mathbb{Q}} \cong A^2(M^{res})_{\mathbb{Q}}$ induced by pullbacks.
Then $A_{0}(M )\cong A_{0}(M^{\mathrm{res}} )$  and, since we are only interested in zero--cycles on surfaces, we can pretend that surfaces with rational double points as singularities are smooth.
\end{rem}
By the previous remark we can reduce our situation to the following diagram
\[
			\begin{tikzcd}
			\mathcal{V}\arrow[d,"4:1", "q"' ]\\
			\mathcal{S}=\mathcal{V}/G\arrow[d,"2:1", "f"' ]\\
			\mathcal{M},
			\end{tikzcd}
			\qquad\qquad
			\begin{tikzcd}
			G \curvearrowright V_{b} \qquad\arrow[d, "4:1", "q_{b}"' ]\\
				\qquad \sigma \curvearrowright S_{b}=V_{b}/G 
				\arrow[d, "2:1", "f_{b}"' ]\\
			M_{b}.
		\end{tikzcd}
\]
where $\mathcal{M}$ parametrizes  the smooth $K3$ surfaces associated to the Todorov surfaces, and on the right we have the fiberwise description.

\subsection{0--cycles}
In order to prove that the family of Todorov surfaces of type $(2,12)$ verifies Voisin's conjecture \ref{conj:Voi}, we want to move the problem on the associated $K3$ surfaces.The key result allowing to show Voisin’s conjecture is the isomorphism
\begin{equation}
\label{eq:key}
A^{2}_{hom}(S)_{\mathbb{Q}}\cong A^{2}_{hom}(M)_{\mathbb{Q}},
\end{equation}
where $S$ is a Todorov surface of type $(2,12)$ and $M$ is its associated $K3$ surface. We prove the isomorphism \eqref{eq:key} in Section \ref{sec:proof} (see Theorem \ref{teo:cyc}). We first show how to deduce Voisin’s conjecture for the family of Todorov surfaces of type $(2,12)$ as a corollary of \eqref{eq:key}. The proof follows the one given in \cite[Corollary 3.2]{laterveer2018algebraic}.

\begin{teo}
\label{teo:Tod}
Let $S$ be a general Todorov surface with fundamental invariants $(\alpha,k)=(2,12)$.\\
Then Conjecture \ref{conj:Voi} is true for $S$.
\end{teo}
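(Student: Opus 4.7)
The plan is to deduce Voisin's conjecture for $S$ from the already-known case of its associated K3 surface $M$, using the isomorphism \eqref{eq:key} as the bridge. I would organise the argument in three steps: first establish the isomorphism, then invoke Voisin's result on K3 covers of $\mathbb{P}^{2}$, and finally transfer the cycle identity back.

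For the first step, since $f\colon S\to M$ is a degree--2 cover with Galois involution $\sigma$, there is the eigenspace decomposition
\[
A^{2}_{\hom}(S)_{\mathbb{Q}}=A^{2}_{\hom}(S)^{+}_{\mathbb{Q}}\oplus A^{2}_{\hom}(S)^{-}_{\mathbb{Q}},
\]
where $f^{*}$ identifies $A^{2}_{\hom}(M)_{\mathbb{Q}}$ with the $\sigma$-invariant summand, so it remains to prove that the anti-invariant summand vanishes for a general $S$ in the family. For this I would exploit the complete family $\overline{\pi}\colon\mathcal{S}\to B$ from Section \ref{subsec:desc} and run a Voisin--Laterveer spreading argument on the fibered self-product $\mathcal{S}\times_{B}\mathcal{S}$. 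The key input is Theorem \ref{teo:desc}: $\mathcal{V}\subset B\times \mathbb{P}^{6}$ is a Zariski open subset of a relative complete intersection of quadrics inside the trivial-Chow ambient $B\times \mathbb{P}^{6}$, so Bloch--Srinivas-type machinery together with a decomposition-of-the-diagonal argument controls the relevant relative Chow groups; taking $G\times G$-invariants and isolating the $\sigma$-anti-invariant piece then yields $A^{2}_{\hom}(S_{b})^{-}_{\mathbb{Q}}=0$ on a general fibre $b\in B$.

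For the second step, Rito's Theorem \ref{teo:Rito} presents $M$ as a degree--2 cover of $\mathbb{P}^{2}$ branched along the union of two cubics, and Voisin's Theorem 3.4 of \cite{voisin1996remarks} establishes Conjecture \ref{conj:Voi} for precisely this class of K3 surfaces. For the third step, given $a,a'\in A^{2}_{\hom}(S)_{\mathbb{Q}}$, the isomorphism of the first step lets us write $a=f^{*}b$, $a'=f^{*}b'$ with $b,b'\in A^{2}_{\hom}(M)_{\mathbb{Q}}$; the K3 identity $b\times b'=b'\times b$ in $A^{4}(M\times M)_{\mathbb{Q}}$ then pulls back via $f\times f$ to the desired $a\times a'=a'\times a$ in $A^{4}(S\times S)_{\mathbb{Q}}$.

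The main obstacle is the first step. Executing the spreading argument requires a clean realisation of $\mathcal{V}\times_{B}\mathcal{V}$ as an open subset of a variety with sufficiently simple Chow groups, together with a careful diagonal-decomposition argument to single out the $\sigma$-anti-invariant component after passing to the $G$-quotient. Theorem \ref{teo:desc}, the explicit description of $\mathcal{S}$ as a quotient of complete intersections of four $G$-invariant quadrics in $\mathbb{P}^{6}$, is exactly what makes this feasible, and it clarifies why the conjecture remains inaccessible for the other nine Todorov families, which lack such an explicit model.
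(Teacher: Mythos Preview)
Your proposal is correct and follows essentially the same route as the paper. Your eigenspace framing of step one is just a repackaging of the paper's correspondence $2\Delta_{S}-{}^{t}\Gamma_{f}\circ\Gamma_{f}$ (which, via $^{t}\Gamma_{f}\circ\Gamma_{f}=\Delta_{S}+\sigma_{*}$, is exactly twice the projector onto the $\sigma$-anti-invariant part), and the remaining ingredients---realising $\mathcal{V}\times_{B}\mathcal{V}$ as a Zariski open in a trivial-Chow variety, the spreading argument, Rito's theorem, and Voisin's K3 result---all match; the paper also adds Rojtman's theorem to pass from $\mathbb{Q}$-coefficients back to $\mathbb{Z}$.
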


\begin{proof} 
First of all we notice that it is enough to prove the theorem with rational coefficients. Indeed, by Rojtman's Theorem (\cite{rojtman1980torsion}) there is no torsion in $A^{4}_{\mathrm{hom}}(S\times S)$.\\
Let $M$ be the associated $K3$ surface to $S$, i.e. the minimal resolution of $S/\sigma$. We have a commutative diagram:
\[
		\begin{tikzcd}
			A^{2}_{\mathrm{hom}}(S)_{\mathbb{Q}}\otimes A^{2}_{\mathrm{hom}}(S)_{\mathbb{Q}}
			\arrow[r ] 
			& A^{4}_{\mathrm{hom}}(S\times S)_{\mathbb{Q}}\\
			A^{2}_{\mathrm{hom}}(M)_{\mathbb{Q}} \otimes A^{2}_{\mathrm{hom}}(M)_{\mathbb{Q}}
			\arrow[u] \arrow[r]
			&A^{4}_{\mathrm{hom}}(M\times M)_{\mathbb{Q}}. \arrow[u]\\
		\end{tikzcd}
\]
By \eqref{eq:key}, the left vertical arrow is an isomorphism. We recall that by Rito's result (Theorem \ref{teo:Rito}) the K3 surface can be described as the blow-up of a double cover of $\mathbb{P}^{2}$ ramified along the union of two cubics. By \cite[Theorem 3.4]{voisin1996remarks}, Conjecture \ref{conj:Voi} is then true for $M$, i.e. 
\[
a \times a'= a' \times a \in A^{4}(M\times M) \quad \forall a,a'\in A^{2}_{\mathrm{hom}}(M).
\]
Hence the conjecture holds for $S$ too.
\end{proof}

\begin{rem}
\label{rem:gen}
We notice that Theorem \ref{teo:Tod} holds for all Todorov surfaces of type (2,12). Indeed, by \cite[Lemma 3.2]{voisin2014chow}, it is enough to prove Voisin's conjecture \ref{conj:Voi} for the general (or even the very general by  \cite[Remark 3.3]{voisin2014chow}) member of the family of Todorov surfaces of type (2,12). The upshot of this approach is that for the general member of this family we have an explicit description in terms of complete intersections and quotients by Theorem \ref{teo:desc}.
\end{rem}

The proof of \eqref{eq:key} will be shown to be a consequence of the following property
\begin{equation}
\label{eq:core}
	A^{2}_{hom}(\mathcal{S}\times_{B}\mathcal{S})_{\mathbb{Q}}=
	A^{2}_{hom}(\mathcal{V}\times_{B}\mathcal{V})_{\mathbb{Q}}=0.
\end{equation}
\begin{rem}
\label{rem:core}
As the map $g\colon \mathcal{V}\overset{4:1}{\longrightarrow} \mathcal{S}=\mathcal{V}/G$ is a finite surjective morphism, $A^{2}_{\mathrm{hom}}(\mathcal{V}\times_{B}\mathcal{V})_{\mathbb{Q}}=0$ implies $A^{2}_{hom}(\mathcal{S}\times_{B}\mathcal{S})_{\mathbb{Q}}=0$. So it is enough to prove the statement for the fibered product of the complete intersections family $\mathcal{V}\times_{B}\mathcal{V}$.
\end{rem}

\subsection{Small Chow groups}

The proof of \eqref{eq:core} is based on the results in \cite[Proposition 4.5]{laterveer2018algebraic}, \cite{voisin2015generalized}, \cite{voisin2013generalized}. 
The idea is to see the fiber product $\mathcal{V}\times_{B}\mathcal{V}$ as a Zariski--open set of a variety $X$ whose Chow groups are trivial. 
\begin{defn}\cite[4.3]{voisin2014chow}
We say that a variety $X$ has \emph{trivial Chow groups} if the cycle class maps are injective, i.e.
\[
	A_{i}(X)_{\mathbb{Q}}\hookrightarrow H_{2i}(X,\mathbb{Q}) \quad \forall i.
\]
\end{defn}
\begin{rem}
In \cite[4.3]{voisin2014chow} the definition of trivial Chow groups is given in the case of a smooth variety using singular cohomology groups, but we can adapt such a  definition in the case of a singular variety using homology instead.
\end{rem}Examples of smooth projective varieties with trivial Chow groups are toric varieties, projective spaces and varieties stratified by affine spaces.

\begin{defn}
\label{defn:X}
We recall that $\overline{B}:=\prod_{i=0}^{3}\Pj\left(H^{0}(\mathbb{P}^6,\mathcal{O}_{\Pj^{6}}(2))^{G}\right)$ is the projective closure of $B$, so that $B$ is a Zariski open set which parametrizes the smooth complete intersections.
    We define the variety $X\subset \overline{B}\times \mathbb{P}^{6}\times \mathbb{P}^{6}$ as
    \begin{equation}
    \label{eq:X}
    	X:=\Bigl\{\Bigl(([Q_{0}]:[Q_{1}]:[Q_{2}]:[Q_{3}]),(p,q)\Bigr)\in \overline{B}\times  
    	\mathbb{P}^{6}\times \mathbb{P}^{6} : 
	Q_{i}(p)=Q_{i}(q)=0\, \forall i\Bigr\}.
    \end{equation}
    Then $X$ contains the fiber product $\mathcal{V}\times_{B}\mathcal{V}$ as a Zariski open set. 
\end{defn}

    Identity \eqref{eq:core} follows by proving that $X$ as in Definition \ref{defn:X} has trivial Chow groups (see Proposition \ref{prop:trivial}). 
\begin{prop}
\label{prop:key}
Suppose that $B\subset \overline{B}$ is small enough to have a smooth morphism $\mathcal{V}\to B$.
Then $A^{2}_{hom}(\mathcal{V}\times_{B}\mathcal{V})_{\mathbb{Q}}=0$, which implies $A^{2}_{hom}(\mathcal{S}\times_{B}\mathcal{S})_{\mathbb{Q}}=0$ by Remark \ref{rem:core}. 
\end{prop}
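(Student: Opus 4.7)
The plan is to realize $\mathcal{V} \times_B \mathcal{V}$ as a Zariski open subset of the incidence variety $X \subset \overline{B} \times \mathbb{P}^6 \times \mathbb{P}^6$ defined just above, to establish that $X$ has trivial Chow groups (the content of the forthcoming Proposition~\ref{prop:trivial}), and to deduce the claim via the localization exact sequence. The smoothness assumption on $\mathcal{V} \to B$ ensures that $\mathcal{V} \times_B \mathcal{V}$ is smooth, so that homological triviality on the open part is well defined.

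First I would analyze the projection $\pi\colon X \to \mathbb{P}^6 \times \mathbb{P}^6$ onto the last two factors. Its fiber over $(p, q)$ is $\prod_{i=0}^{3} \mathbb{P}(W_{p,q})$, where $W_{p,q} \subset W$ is the kernel of the evaluation map $W \to \mathbb{C}^2$, $Q \mapsto (Q(p), Q(q))$. Since $W$ contains the pure squares $x_0^2, \ldots, x_6^2$, the linear form $\mathrm{ev}_p$ is nonzero for every $p \in \mathbb{P}^6$, so the rank of $\mathrm{ev}_{p,q}$ equals $2$ on a Zariski open set $U_0 \subset \mathbb{P}^6 \times \mathbb{P}^6$ and drops to $1$ precisely on the closed locus $Z$ where $\mathrm{ev}_p$ and $\mathrm{ev}_q$ are proportional---the preimage of the diagonal of $\mathbb{P}(W^*)$ under $\nu \times \nu$, with $\nu\colon \mathbb{P}^6 \to \mathbb{P}(W^*)$, $p \mapsto [\mathrm{ev}_p]$. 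Over $U_0$ the map $\pi$ is a Zariski-locally trivial $(\mathbb{P}^7)^4$-bundle, while over $Z$ it is a $(\mathbb{P}^8)^4$-bundle.

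Next, to show $X$ has trivial Chow groups, I would stratify $\mathbb{P}^6 \times \mathbb{P}^6$ by affine cells compatible with $Z$. The map $\nu$ is $G$-invariant and generically finite of degree $|G|=4$ over its image, its fibers being $G$-orbits on the locus where all coordinates are nonzero and collapsing further on the coordinate strata. Consequently $Z$ is essentially the union of the four graphs of the elements of $G$ acting on $\mathbb{P}^6$, glued along lower-dimensional loci, and one can refine a Bruhat decomposition of $\mathbb{P}^6 \times \mathbb{P}^6$ to obtain a decomposition by affine cells respecting $Z$. Pulling back via $\pi$, the preimage of each cell is a projective bundle over an affine space and therefore has trivial Chow groups; assembling via repeated use of the localization exact sequence yields the triviality of Chow groups for $X$.

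Finally, the triviality $A^*(X)_{\mathbb{Q}} \hookrightarrow H^*(X, \mathbb{Q})$, combined with the surjectivity of the restriction $A^j(X)_{\mathbb{Q}} \twoheadrightarrow A^j(\mathcal{V} \times_B \mathcal{V})_{\mathbb{Q}}$ from the localization sequence and the compatibility with the cycle class maps, forces $A^j_{\mathrm{hom}}(\mathcal{V} \times_B \mathcal{V})_{\mathbb{Q}} = 0$ for every $j$. The case $j = 2$ is the stated proposition, and the claim for $\mathcal{S} \times_B \mathcal{S}$ then follows by Remark~\ref{rem:core}. The principal technical obstacle is the detailed stratification of the jumping locus $Z$: identifying its irreducible components, checking that they admit affine cell decompositions, and handling the singular behavior of $X$ along $\pi^{-1}(Z)$ caused by the drop in fiber dimension.
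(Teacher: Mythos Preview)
Your overall architecture matches the paper's: embed $\mathcal{V}\times_B\mathcal{V}$ as a Zariski open in $X$, stratify $\mathbb{P}^6\times\mathbb{P}^6$ by the rank of the evaluation map to exhibit $X$ as built from projective bundles over affine pieces, and conclude that $X$ has trivial Chow groups. (One inaccuracy: the map $\nu$ is invariant not just under $G$ but under the larger group generated by all sign changes preserving $x_1x_2,\,x_3x_4,\,x_5x_6$, so $Z$ has more than four ``graph'' components---the paper finds fourteen partial diagonals. This is fixable.)

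The genuine gap is in your final step. From ``$A^*(X)_{\mathbb{Q}}\hookrightarrow H^*(X,\mathbb{Q})$'' and ``$A^j(X)_{\mathbb{Q}}\twoheadrightarrow A^j(\mathcal{V}\times_B\mathcal{V})_{\mathbb{Q}}$'' you cannot directly conclude $A^j_{\mathrm{hom}}(\mathcal{V}\times_B\mathcal{V})_{\mathbb{Q}}=0$. Given $a$ homologically trivial on the open part, a lift $\overline{a}\in A_{m-2}(X)_{\mathbb{Q}}$ need not be homologically trivial on $X$: its class merely restricts to zero, so it lies in the image of $H^{BM}_*(D)\to H^{BM}_*(X)$ with $D=X\setminus(\mathcal{V}\times_B\mathcal{V})$. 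To kill $\overline{a}$ you must correct it by a cycle supported on $D$ with the same cohomology class, i.e.\ you need the relevant Borel--Moore class on $D$ to be algebraic. The paper supplies exactly this missing ingredient: after resolving $\widetilde{D}\to D$, the obstruction lives in $H^2(\widetilde{D},\mathbb{Q})$, and Lefschetz $(1,1)$ guarantees it is the class of some $b\in A^1(\widetilde{D})_{\mathbb{Q}}$; then $\overline{a}-i_*r_*b$ is homologically trivial on $X$ and hence zero by triviality of Chow groups. Note that this correction is specific to codimension $2$---your claim ``for every $j$'' would require the Hodge conjecture on $\widetilde{D}$ in higher codimension, which is not available.
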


\begin{proof}
Let $X$ be the variety defined in \eqref{eq:X} and assume it has trivial Chow groups (see Proposition \ref{prop:trivial}).
Let $D:=X\backslash (\mathcal{V}\times_{B}\mathcal{V})$ be the boundary divisor and $m:=\dim X$. Let $a\in A^{2}_{\mathrm{hom}}(\mathcal{V}\times_{B}\mathcal{V})_{\mathbb{Q}}$ a homologically trivial cycle. So $a$ is the restriction of a cycle in $X$, i.e. there exists $\overline{a}\in A_{m-2}(X)_{\mathbb{Q}}$ such that $\overline{a}|_{\mathcal{V}\times_{B}\mathcal{V}}=a$ and $[\overline{a}]|_{\mathcal{V}\times_{B}\mathcal{V}}=0\in H^{4}(\mathcal{V}\times_{B}\mathcal{V},\mathbb{Q})$. Performing a resolution of singularities on $X$
\[
	\begin{tikzcd}
		\widetilde{X}\arrow[r, "res"]&X\\
		\widetilde{D}	\arrow[u,hook]\arrow[r,"res","r"' ]&D	\arrow[u,hook, "i"']
	\end{tikzcd}
\]
 we find out that the class $[\overline{a}]$ comes from a Hodge class $\beta\in H^{2}(\widetilde{D},\mathbb{Q})$ since $\overline{a}\in A_{m-2}(X)_{\mathbb{Q}}$ and since it is homologically trivial on $\mathcal{V}\times_{B}\mathcal{V}$.
By Lefschetz Theorem on $(1,1)$--classes, we have that $\beta$ is algebraic, so there exists a cycle $b\in A^{1}(\widetilde{D})_{\mathbb{Q}}$ such that $[b]=\beta$. Let us define $\overline{\overline{a}}:=\overline{a}-i_{*}(r_{*}b)\in A^{\mathrm{hom}}_{m-2}(X)_{\mathbb{Q}}$, which is a trivial group since $X$ has trivial Chow groups; thus
$\overline{\overline{a}}=0$ in  $A^{\mathrm{hom}}_{m-2}(X)_{\mathbb{Q}}$ which yields $0=\overline{\overline{a}}|_{\mathcal{V}\times_{B}\mathcal{V}}=a$ and we conclude that $A^{2}_{\mathrm{hom}}(\mathcal{V}\times_{B}\mathcal{V})=0$. 
\end{proof}

It is left to prove that $X$ has trivial Chow groups, following  the argument in \cite{laterveer2018algebraic}.
    
We consider the projection
    \begin{equation}
    \label{eq:pi}
    	X\overset{\pi}{\rightarrow} \mathbb{P}^{6}\times\mathbb{P}^{6},
    \end{equation}
    then the fiber over a point is a product of projective spaces 
    \[
    	\pi^{-1}(p,q)\cong \{b\in \overline{B}\colon Q_{i}(p)=Q_{i}(q)=0\}\cong 
    	\mathbb{P}^{r}\times \mathbb{P}^{r}\times \mathbb{P}^{r}\times 
    	\mathbb{P}^{r}\subset \overline{B},
    \]
   for some $r\leq9$, but the fiber does not have constant dimension on the whole space.\\

To prove that $X$ has trivial Chow groups, the idea is to find a stratification of $\mathbb{P}^{6}\times\mathbb{P}^{6}$ such that the fiber of $\pi$ has constant dimension on each stratum. In Section \ref{subsec:strat} we give such a stratification, and in Section \ref{subsec:trivial} we prove that $X$ has trivial Chow groups.

\subsubsection{Stratification of $\mathbb{P}^{6}\times\mathbb{P}^{6}$}
\label{subsec:strat}

\begin{defn}
For any $j,k\in \{0,\dots,6\}$ we define partial diagonals as follows
\begin{equation*}
\begin{split}
	\Delta^{j,k}_{\pm}&:=\Bigl\{(p,q)\in \mathbb{P}^{6}\times\mathbb{P}^{6} : 
	\exists \lambda \in \mathbb{C}^{*} \hbox{ s.t. } q_{0}=\pm \lambda p_{0},
	q_{j}=-\lambda p_{j},\\ 
	&\qquad\qquad\qquad\qquad\qquad\qquad\qquad q_{k}=-\lambda 
	p_{k}, q_{i}=\lambda p_{ i} \, \forall i\neq j,k\Bigr\};\\
	 \Delta^{j,k,l,m}_{\pm}&:=\Bigl\{(p,q)\in \mathbb{P}^{6}\times\mathbb{P}^{6} : \exists 
	 \lambda \in \mathbb{C}^{*} \hbox{ s.t. } q_{0}=\pm \lambda p_{0},
	 q_{i}=-\lambda p_{i} \, \\ 
	 &\qquad\qquad\qquad\qquad\forall i\in\{ j,k,l,m\} \hbox{ and } 
	q_{i}=\lambda p_{ i} \, \forall i\neq j,k,l,m  \Bigr\};\\
	\Delta^{0}&:=\Bigl\{(p,q)\in \mathbb{P}^{6}\times\mathbb{P}^{6} : \exists \lambda 
	\in \mathbb{C}^{*} \hbox{ s.t. } q_{0}=\lambda p_{0}, q_{i}=-\lambda p_{i} 
	\forall i\neq0 \Bigr\}\,.
\end{split}
\end{equation*}
\end{defn}

\begin{rem}
Since we are in a projective space, some of the partial diagonals are identified by the -1 product. Indeed we have: $\Delta^{1,2,3,4}_{+}=\Delta^{5,6}_{-}$;
$\Delta^{1,2,3,4}_{-}=\Delta^{5,6}_{+}$;
$\Delta^{1,2,5,6}_{+}=\Delta^{3,4}_{-}$;
$\Delta^{1,2,5,6}_{-}=\Delta^{3,4}_{+}$;
$\Delta^{3,4,5,6}_{+}=\Delta^{1,2}_{-}$;
 $\Delta^{3,4,5,6}_{-}=\Delta^{1,2}_{+}$.

\end{rem}

\begin{lem}
\label{lem:strat}
There exists a Zariski-open set of points $U\subset \mathbb{P}^{6}\times \mathbb{P}^{6}$ such that, for any $(p,q)\in U$, the fiber of the projection $\pi\colon X\to \mathbb{P}^{6}\times \mathbb{P}^{6}$ is $\pi^{-1}(p,q)\cong \mathbb{P}^{7}\times\mathbb{P}^{7}\times\mathbb{P}^{7}\times\mathbb{P}^{7}$. 
Moreover, considering the complement 
$Z=(\mathbb{P}^{6}\times\mathbb{P}^{6})\backslash U$ with
\[
	Z:=\Delta_{\mathbb{P}^{6}\times \mathbb{P}^{6}}\cup 
	\Delta^{1,2}_{+}\cup\Delta^{1,2}_{-}\cup\Delta^{3,4}_{+}\cup\Delta^{3,4}_{-}\cup
	\Delta^{5,6}_{+}\cup\Delta^{5,6}_{-}\cup\Delta^{0},
\]
for all the points $(p',q')\in Z$ the fiber is $\pi^{-1}(p',q')\cong \mathbb{P}^{8}\times\mathbb{P}^{8}\times\mathbb{P}^{8}\times\mathbb{P}^{8}$.
\end{lem}

\begin{proof}
Each point of $\mathbb{P}^{6}$ imposes one condition on each component $\mathbb{P}(H^{0}(\mathbb{P}^{6}, \mathcal{O}_{\mathbb{P}^{6}}(2))^{G})$ of 
\[
\overline{B}=\Pi_{i=0}^{3}\left(\mathbb{P}(H^{0}(\mathbb{P}^{6}, \mathcal{O}_{\mathbb{P}^{6}}(2))^{G})\right)\cong \mathbb{P}^{9}\times\mathbb{P}^{9}\times\mathbb{P}^{9}\times\mathbb{P}^{9}.
\]
Indeed, let us consider $Q\in \mathbb{P}(H^{0}(\mathbb{P}^{6}, \mathcal{O}_{\mathbb{P}^{6}}(2))^{G})\cong \mathbb{P}^{9}$. By \eqref{eq:desc}, we can make the condition $Q(p)=0$ explicit as
\[
	Q(p)=\alpha p_{0}^{2}+\beta p_{1}^{2}+\gamma p_{2}^{2}+\delta p_{3}^{2}+\epsilon  p_{4}^{2}+\zeta p_{5}^{2}+\eta p_{6}^{2}+\theta p_{1}p_{2}+\kappa p_{3}p_{4}+\lambda p_{5}p_{6}=0,
\]
with $p=(p_{0}:\dots:p_{6})\in \mathbb{P}^{6}$ and $\alpha,\dots,\lambda \in \mathbb{C}$. So given a point $(p,q)\in \mathbb{P}^{6}\times\mathbb{P}^{6}$ we have two such conditions, which we can represent by a matrix
\[
	A(p,q):=\begin{pmatrix}
		p_{0}^{2}&p_{1}^{2}&p_{2}^{2}&p_{3}^{2}&p_{4}^{2}&p_{5}^{2}&p_{6}^{2}&p_{1}p_{2}&p_{3}p_{4}&p_{5}p_{6}\\
		q_{0}^{2}&q_{1}^{2}&q_{2}^{2}&q_{3}^{2}&q_{4}^{2}&q_{5}^{2}&q_{6}^{2}&q_{1}q_{2}&q_{3}q_{4}&q_{5}q_{6}\
	\end{pmatrix}.
\]
In general, $A$ has maximum rank, so that inside $\mathbb{P}^{6}\times\mathbb{P}^{6}$ there is a Zariski--open set of pair of points $(p,q)$, such that the conditions imposed by $p$ and $q$ on the the linear system $\vert W\vert$ are independent. So for a general point in $\mathbb{P}^{6}\times\mathbb{P}^{6}$ we have that the fiber is
\[
 	\pi^{-1}(p,q)\cong \left\{b\in \overline{B}\colon Q_{i}(p)=Q_{i}(q)=0\right\}\cong 
    	\mathbb{P}^{7}\times\mathbb{P}^{7}\times\mathbb{P}^{7}\times\mathbb{P}^{7} \subset \overline{B},
\]
However, the rank of $A$ is not always maximum. 
there is a locus $Z\subset \mathbb{P}^{6}\times\mathbb{P}^{6}$ where the conditions imposed by $p$ and $q$ are no more independent.

For any point $(p,q)\in Z$ we have that the rank of $A$ is not maximum, so the fiber of such a point is
\[
	\pi^{-1}(p,q)\cong \left\{b\in \overline{B}\colon Q_{i}(p)=Q_{i}(q)=0\right\}\cong 
    	\mathbb{P}^{8}\times\mathbb{P}^{8}\times\mathbb{P}^{8}\times\mathbb{P}^{8} 
	\subset \overline{B}.
\] 

Indeed, if $\rank A(p, q) = 1$ then all the $2\times 2$ minors vanish. In particular the 	  	vanishing of the first 7 minors gives $0=p_{i}^{2}q_{j}^{2}−p_{j}^{2}q_{i}^{2} = 
	(p_{i}q_{j}−p_{j}q_{i})(p_{i}q_{j}+p_{j}q_{i}), \forall i,j$.
	For a given $i_{0}$, if $p_{i_{0}} =0$ then $q^{2}_{i_{0}}p^{2}_{j} =0$ for any $j$ 
	which, as $p\in \mathbb{P}^{6}$, yields $q_{i_{0}}=0$. So there is a $i_{0}$ such  
	that $p_{i_{0}}q_{i_{0}}\neq0$. Then we get $q_{j} = \epsilon_{j}\lambda p_{j}$ for
	 any $j$, with $\lambda = \frac{q_{i_{0}}}{p_{i_{0}}}$ and $\epsilon_{j}\in \{\pm1\}$. 
	 By the vanishing of all the $2\times2$ minors, we also get 
	 $0 = p^{2}_{i_{0}} q_{1}q_{2} − q^{2}_{i_{0}} p_{1}p_{2} = \lambda^{2}p_{i_{0}}^{2} 
	 p_{1}p_{2}(\epsilon_{1}\epsilon_{2} − 1)$ i.e. $\epsilon_{1} = \epsilon_{2}$.
	Likewise, we get $\epsilon_{3} =\epsilon_{4}$ and $\epsilon_{5} =\epsilon_{6}$. 
	Conversely for any pair $(p,q)\in \mathbb{P}^{6}\times \mathbb{P}^{6}$
	satisfying those conditions, a direct computation shows that $\rank A(p, q) = 1$ i.e. 
	$(p, q) \in Z$. 	\\
We define $U\colon=\left(\mathbb{P}^{6}\times \mathbb{P}^{6}\right) \backslash Z$. 
Then we claim that $U$ is the Zariski open subset over which the fiber has the lowest dimension, i.e. for any $(p,q)\in U$ we have that $\pi^{-1}(p,q)\cong \mathbb{P}^{7}\times \mathbb{P}^{7}\times \mathbb{P}^{7}\times \mathbb{P}^{7}$. \\
Indeed, by definition, we have that $U = (\mathbb{P}^{6}\times \mathbb{P}^{6})\backslash Z$ is the locus $\left\{(p, q)\in \mathbb{P}^{6}\times\mathbb{P}^{6} \text{ s.t. } \rank A(p, q) 	=2\right\}$.
\end{proof}

\subsubsection{$X$ has trivial Chow group}
\label{subsec:trivial}
In order to prove that $X$ has trivial Chow groups, we introduce a related property following the idea given in \cite{laterveer2018algebraic}.
\begin{defn}
(\cite{totaro2014chow})Let $V$ be a quasi--projective variety, and let $A_{i}(V,j)$ denote  Bloch's higher Chow groups. Then there are functorial cycle class maps
\[
	A_{i}(V,j)\to \mathrm{Gr}^{W}_{-2i}H_{2i+j}(V,\mathbb{Q}),
\]
where $W$ denotes the Deligne's weight filtration on the Borel--Moore homology (see \cite[Appendix B]{peters2008mixed} and \cite{deligne1974poids}). We recall that, since $W$ is an increasing filtration, the associated graded piece is 
\[
	\mathrm{Gr}^{W}_{-2i}H_{2i+j}(V,\mathbb{Q}):=
	\frac{W_{-2i}H_{2i+j}(V,\mathbb{Q})}{W_{-2i-1}H_{2i+j}(V,\mathbb{Q})}.
\]
\begin{itemize}[leftmargin=3.5cm]
	\item[\textbf{Weak property:}] we say that $V$ has the weak property if there are 		isomorphisms induced by the cycle class maps
		\[
			A_{i}(V)_{\mathbb{Q}}\overset{\sim}{\rightarrow}W_{-2i}
			H_{2i}(V,\mathbb{Q}) \quad 
			\forall i.
		\]
	\item[\textbf{Strong property:}] we say that $V$ has the strong property if it has the 
	weak property and there are surjections induced by the cycle class maps 
	\[
		A_{i}(V,1)_{\mathbb{Q}}\twoheadrightarrow \mathrm{Gr}^{W}_{-2i}
		H_{2i+1}(V,\mathbb{Q}) 
		\quad \forall i.
	\]
\end{itemize}

\end{defn}
\begin{rem}
We notice that we have the following implications: strong property $\Rightarrow$ weak property $\Rightarrow$ trivial Chow groups. Indeed we have
\[
	A_{i}(V)_{\mathbb{Q}}\overset{\sim}{\rightarrow}W_{-2i}
	H_{2i}(V,\mathbb{Q}) \hookrightarrow H_{2i}(V,\mathbb{Q}).
\]
\end{rem}

We have the following useful result.
\begin{lem}\cite[Lemma 4.2,4.3,4.4]{laterveer2018algebraic}
\label{lem:res}
Let $X$ be a quasi--projective variety.
\begin{enumerate}
	\item Let $Y\subset X$ be a closed subvariety and $U=X\backslash Y$ be its 	
	complement. If $Y$ and $U$ have the strong property, then $X$ has the strong 
	property too.
	\item Suppose that $X$ admits a stratification by strata of the form 
	$\mathbb{A}^{k}\backslash L$, where $L$ is a finite union of linearly embedded 
	affine subspaces. Then $X$ has the strong property.
	\item If $X$ has the strong property and $P\to X$ is a projective bundle, then $P$
 	has the strong property too.
 \end{enumerate}
\end{lem}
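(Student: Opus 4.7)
The plan is to prove the three parts in sequence by exploiting the compatibility of Bloch's higher Chow groups with Deligne's weight filtration on Borel--Moore homology under standard localization and projective bundle operations.

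For part (1), I would compare the localization long exact sequences for the closed/open pair $Y\hookrightarrow X\hookleftarrow U$. On one side there is Bloch's localization sequence
\[
\cdots \to A_i(Y,1) \to A_i(X,1) \to A_i(U,1) \to A_i(Y) \to A_i(X) \to A_i(U) \to 0,
\]
and on the other side there is a compatible long exact sequence of weight-graded Borel--Moore homology groups. The cycle class maps provide a morphism between these two long exact sequences, and the hypothesis that both $Y$ and $U$ satisfy the strong property, combined with the five-lemma applied in each relevant degree, yields the strong property for $X$. The delicate point here is that the associated graded pieces of the weight filtration actually sit in a genuine long exact sequence; this relies on strictness of morphisms of mixed Hodge structures with respect to the weight filtration and is the main technical ingredient of the whole lemma.

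For part (2), I would proceed by induction on the number of strata using part (1). The base case is that an open subset of the form $\mathbb{A}^k\setminus L$, where $L$ is a finite union of linearly embedded affine subspaces, has the strong property. This reduces, via repeated application of part (1), to the fact that affine spaces themselves have the strong property, which follows from homotopy invariance of higher Chow groups together with the fact that the Borel--Moore homology of $\mathbb{A}^k$ is pure of weight $2k$ and concentrated in a single degree. Once the strong property is established on each stratum, part (1) glues the strata together and transports the strong property to the whole of $X$.

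For part (3), I would use the projective bundle formula on both sides. Bloch's higher Chow groups of a projective bundle $P\to X$ of rank $r$ decompose as $A_i(P,j)\cong\bigoplus_{k=0}^{r}A_{i-k}(X,j)$, and there is a parallel decomposition for weight-graded Borel--Moore homology that is compatible with the cycle class map. Since the strong property on $X$ controls each summand, it transfers summand by summand to $P$, giving the desired conclusion.
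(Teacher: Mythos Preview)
Your proposal is correct and matches the approach indicated in the paper, which does not give its own proof but cites \cite[Lemmas 4.2--4.4]{laterveer2018algebraic} and remarks that the result ``is proven using extended localization exact sequences for Higher Chow groups and long exact sequence of relative cohomology.'' Your outline---localization sequences compared via the cycle class map with a five-lemma argument (using strictness of morphisms of mixed Hodge structures to get exactness on weight-graded pieces), induction on strata reducing to affine spaces, and the projective bundle formula on both sides---is exactly the intended strategy.
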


This result is proven using extended localization exact sequences for Higher Chow groups and long exact sequence of relative cohomology.
 It will allow us, through an adequate partition of $X$, to prove the following:
\begin{prop}
\label{prop:trivial}
$X$ has the strong property. In particular it has trivial Chow groups, i.e.
\[
	A_ {*}^{\mathrm{hom}}(X)_{\mathbb{Q}}=0.
\]	
\end{prop}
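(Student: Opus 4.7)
The plan is to establish not only triviality of the Chow groups of $X$ but the stronger, more flexible \emph{strong property} from Lemma \ref{lem:res}, and then appeal to the implication strong property $\Rightarrow$ trivial Chow groups noted above. The strategy mirrors the argument in \cite{laterveer2018algebraic}: stratify $\mathbb{P}^6 \times \mathbb{P}^6$ so that $\pi$ becomes a tower of projective bundles on each stratum, then invoke the three-part lemma to assemble the conclusion.

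First I would refine the already-given stratification $\mathbb{P}^6 \times \mathbb{P}^6 = U \sqcup Z$ into a finer locally closed decomposition $\{T_\alpha\}$ with two properties: (a) each $T_\alpha$ is of the form $\mathbb{A}^k \setminus L$ for $L$ a finite union of linearly embedded affine subspaces, and (b) the rank of the matrix $A(p,q)$ is constant on each $T_\alpha$. Property (a) ensures, by Lemma \ref{lem:res}(2), that each $T_\alpha$ has the strong property. For property (b), note that over $U$ the rank is identically $2$, while $Z$ decomposes into the (locally closed refinements of the) strata $\Delta^{j,k}_\pm$, $\Delta^{j,k,l,m}_\pm$, $\Delta^0$ and the diagonal $\Delta_{\mathbb{P}^6\times\mathbb{P}^6}$, each of which is parametrized by $(p,\lambda)\in \mathbb{P}^6\times \mathbb{C}^*$ modulo scaling and is therefore isomorphic to (an open subset of) $\mathbb{P}^6$. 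Since all of these partial diagonals and their intersections are linear in appropriate affine charts, refining along their incidence pattern yields a stratification of $\mathbb{P}^6\times\mathbb{P}^6$ by pieces of the required form.

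Next, over each stratum $T_\alpha$ the kernel of the restriction map
\[
H^0(\mathbb{P}^6,\mathcal{O}(2))^G \longrightarrow \mathbb{C}\oplus \mathbb{C},\qquad Q\mapsto (Q(p),Q(q)),
\]
is a vector bundle of constant rank ($8$ if $T_\alpha\subset U$, $9$ if $T_\alpha\subset Z$). Hence $\pi^{-1}(T_\alpha)\to T_\alpha$ is a Zariski-locally trivial product of four projective bundles, obtained by projectivizing this kernel in each of the four factors of $\overline{B}$. By four applications of Lemma \ref{lem:res}(3), the preimage $X_\alpha := \pi^{-1}(T_\alpha)$ inherits the strong property from $T_\alpha$.

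Finally, I would glue: ordering the strata so that each $X_{\alpha}$ is open in the union of the $X_\beta$ with $\overline{T_\beta}\supseteq T_\alpha$, a straightforward induction using Lemma \ref{lem:res}(1) shows that $X = \bigsqcup_\alpha X_\alpha$ has the strong property, and in particular $A^{\mathrm{hom}}_\ast(X)_{\mathbb{Q}}=0$. The main technical nuisance I expect is bookkeeping in step one: the partial diagonals $\Delta^{j,k}_\pm$, $\Delta^{j,k,l,m}_\pm$, $\Delta^0$ and $\Delta_{\mathbb{P}^6\times \mathbb{P}^6}$ intersect each other in a combinatorially intricate pattern, and one must verify that after iteratively removing incidences the resulting locally closed pieces genuinely have the shape $\mathbb{A}^k\setminus L$ for $L$ a union of \emph{linear} subspaces (in suitable affine coordinates on $\mathbb{P}^6\times\mathbb{P}^6$). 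Once this combinatorial verification is done, the projective-bundle and gluing steps are formal consequences of Lemma \ref{lem:res}.
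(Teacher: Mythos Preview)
Your proposal is correct and follows essentially the same three-step architecture as the paper: stratify $\mathbb{P}^6\times\mathbb{P}^6$ into pieces of the shape $\mathbb{A}^k\setminus L$, note that $\pi$ is a product of projective bundles over each piece, and assemble via Lemma~\ref{lem:res}. The only substantive difference is in how the stratification is carried out. You propose to refine along the incidence pattern of the fourteen partial diagonals and flag the resulting bookkeeping as the main nuisance; the paper sidesteps that combinatorics entirely by stratifying coordinate-by-coordinate. Concretely, it introduces the loci $A_i=\{p_i\neq 0,\ q_i\neq 0\}$, $B_i=\{p_i=0,\ q_i=0\}$, $C_i=\{p_i q_i=0,\ p_i\neq q_i\}$ and peels off $Z\cap A_0$, then $(Z\cap B_0)\cap A_1$, and so on. The key simplification is that $Z\cap C_i=\varnothing$ at every stage (since on $Z$ one always has $q_i=\pm\lambda p_i$), so $Z$ falls apart into $14$ copies of $\mathbb{A}^{6-i}$ at each level, and $U$ into pieces of the form $\mathbb{A}^{12-2i}\setminus(\coprod_{j=1}^{14}\mathbb{A}^{6-i})$ together with analogous $C_i$-pieces. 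The paper then applies Lemma~\ref{lem:res}(3) just twice, globally to $X_Z\to Z$ and $X_U\to U$, rather than stratum-by-stratum as you suggest; this is legitimate because the rank of $A(p,q)$ is constant on all of $Z$ and all of $U$, so the kernel is already a vector bundle there. Your finer application of part (3) would of course also work, but is unnecessary.
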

\begin{proof}
  For any point $(p,q)\in \mathbb{P}^{6}\times \mathbb{P}^{6}$ we can consider the fiber of the projection  $X\overset{\pi}{\rightarrow} \mathbb{P}^{6}\times\mathbb{P}^{6}$ as in \eqref{eq:pi}. We have that
    \[
    	\pi^{-1}(p,q)\cong \{b\in \overline{B}\colon Q_{i}(p)=Q_{i}(q)=0\}\cong 
    	\mathbb{P}^{r}\times \mathbb{P}^{r}\times \mathbb{P}^{r}\times 
    	\mathbb{P}^{r}\subset \overline{B},
    \]
   for some $r\leq9$, where $\overline{B}$ is defined as in \eqref{eq:B}.
The dimension of the fiber is not constant, nonetheless, by Lemma \ref{lem:strat}, we have that it is constant on a Zariski open set $U\subset \mathbb{P}^{6}\times \mathbb{P}^{6}$ and on its complement $Z$.\\
Our situation is the following
\[
	\begin{tikzcd}
		X_{U}\arrow[r,hook]&X \arrow[r, hookleftarrow]\arrow[d, "\pi"]& X_{Z}\\
		U\arrow[r,hook]&\mathbb{P}^{6}\times \mathbb{P}^{6} 
		\arrow[r, hookleftarrow]& Z,
	\end{tikzcd}
\]
where $X_{Z}=\pi^{-1}(Z)$, $X_{U}=\pi^{-1}(U)$, $U$ and $Z$ are defined as in Lemma \ref{lem:strat}.
We want to prove that $X_{Z}$ and $X_{U}$ have the strong property and then conclude by means of Lemma \ref{lem:res}(1).\\

For every $i=0,\dots, 6$ we define 
\begin{gather*}
	A_{i}:=\{(p,q)\in \mathbb{P}^{6}\times \mathbb{P}^{6}\colon 
	p_{i}\neq 0 \hbox{ and } q_{i}\neq 0\};\\
	B_{i}:=\{(p,q)\in \mathbb{P}^{6}\times \mathbb{P}^{6}\colon 
	p_{i}= 0 \hbox{ and } q_{i}= 0\};\\
	C_{i}:=\{(p,q)\in \mathbb{P}^{6}\times \mathbb{P}^{6}\colon 
	p_{i}\neq 0 \hbox{ and } q_{i}= 0\};\\
	D_{i}:=\{(p,q)\in \mathbb{P}^{6}\times \mathbb{P}^{6}\colon  p_{i}=0 \text{ and } q_{i}\neq 0\}
\end{gather*}
First of all, we consider the locus $Z$. The intersections $Z\cap C_{0}$ and $Z\cap D_{0}$ are empty, whereas $\overline{A}_{0}:=Z\cap A_{0}$ is not. Since $A_{0} \cap \Delta_{\mathbb{P}^{6}\times \mathbb{P}^{6}}=A_{0}\cap \Delta_{0}$, we have that $\overline{A}_{0}:=Z\cap A_{0}=A_{0}\cap \Delta_{\mathbb{P}^{6}\times \mathbb{P}^{6}}\cup 
	\Delta^{1,2}_{+}\cup\Delta^{1,2}_{-}\cup\Delta^{3,4}_{+}\cup\Delta^{3,4}_{-}\cup
	\Delta^{5,6}_{+}\cup\Delta^{5,6}_{-}$ , so
$\overline{A}_{0}$ is isomorphic to $7$ copies of $\mathbb{A}^{6}$ via the map
\[
	\begin{tikzcd}
	\left(\left[1: \frac{p_{1}}{p_{0}}:\frac{p_{2}}{p_{0}}:\frac{p_{3}}{p_{0}}:
	\frac{p_{4}}{p_{0}}:\frac{p_{5}}{p_{0}}:\frac{p_{6}}{p_{0}}\right],
	\left[\pm\lambda: \pm\lambda\frac{p_{1}}{p_{0}}:\pm\lambda\frac{p_{2}}{p_{0}}:
	\pm\lambda\frac{p_{3}}{p_{0}}:\pm\lambda\frac{p_{4}}{p_{0}}:
	\pm\lambda\frac{p_{5}}{p_{0}}:\pm\lambda\frac{p_{6}}{p_{0}}\right]\right) 
	\arrow[d,mapsto]\\
	\left(\pm\frac{p_{1}}{p_{0}},\pm\frac{p_{2}}{p_{0}},\pm\frac{p_{3}}{p_{0}},\pm\frac{p_{4}}{p_{0}},\pm\frac{p_{5}}{p_{0}},\pm\frac{p_{6}}{p_{0}}\right),
		\end{tikzcd}
\]
with $\lambda\in \mathbb{C}^{*}$. For the intersection $\overline{Z}_{0}:=Z\cap B_{0}$, we can consider $\overline{A}_{1}:=\overline{Z}_{0}\cap A_{1}$. We notice that $\Delta^{1,2}_{+}\cap B_{0}=\Delta^{1,2}_{-}\cap B_{0}$, $\Delta^{3,4}_{+}\cap B_{0}=\Delta^{3,4}_{-}\cap B_{0}$ and $\Delta^{5, 6}_{+}\cap B_{0}=\Delta^{5, 6}_{-}\cap B_{0}$. Moreover, $A_{1}\cap B_{0}\cap \Delta_{\mathbb{P}^{6}\times \mathbb{P}^{6}}=A_{1}\cap B_{0}\cap \Delta_{0}$, so $\overline{A}_{1}$ is isomorphic to $4$ copies of $\mathbb{A}^{5}$ via the map
\[
	\begin{tikzcd}
	\left(\left[0: 1:\frac{p_{2}}{p_{1}}:\frac{p_{3}}{p_{1}}:\frac{p_{4}}{p_{1}}:\frac{p_{5}}{p_{1}}:\frac{p_{6}}{p_{1}}\right],\left[0: \pm\lambda:\pm\lambda\frac{p_{2}}{p_{1}}:\pm\lambda\frac{p_{3}}{p_{1}}:\pm\lambda\frac{p_{4}}{p_{1}}:\pm\lambda\frac{p_{5}}{p_{1}}:\pm\lambda\frac{p_{6}}{p_{1}}\right]\right) 
	\arrow[d,mapsto]\\
	\left(\pm\frac{p_{2}}{p_{1}},\pm\frac{p_{3}}{p_{1}},\pm\frac{p_{4}}{p_{1}},\pm\frac{p_{5}}{p_{1}},\pm\frac{p_{6}}{p_{1}}\right),
		\end{tikzcd}
\]
with $\lambda\in \mathbb{C}^{*}$. 
The intersections $\overline{Z_{0}}\cap C_{1}$ and $\overline{Z_{0}}\cap D_{1}$ are empty and next we can consider $\overline{Z}_{1}:=\overline{Z}_{0}\cap B_{1}$. Iterating this process we get
\begin{gather*}
	\begin{cases}
		\overline{Z}_{-1}:=Z \\
		\overline{Z}_{i}:=Z\cap \left(\bigcap_{j=0}^{i}B_{j}\right) &\text{ for }  i\in\{0, 
		\dots 6\}.
	\end{cases}
\end{gather*}
Summarizing the resulting iterating process, we have that $\overline{A}_{0}\cong \coprod^{7}\mathbb{A}^{6}$; $\overline{A}_{1}\cong \coprod^{4}\mathbb{A}^{5}$; $\overline{A}_{2}\cong \coprod^{3}\mathbb{A}^{4}$, since at this level intersecting with $\Delta_{\mathbb{P}^{6}\times \mathbb{P}^{6}}$ is the same as intersecting also with $\Delta^{1,2}_{\pm}$; $\overline{A}_{3}\cong \coprod^{3}\mathbb{A}^{3}$; $\overline{A}_{4}\cong \coprod^{2}\mathbb{A}^{2}$, since here intersecting with $\Delta^{1,2}_{\pm}$ is the same as intersecting also with $\Delta^{3,4}_{\pm}$; $\overline{A}_{5}\cong \coprod^{2}\mathbb{A}^{1}$ and $\overline{A}_{6}\cong \mathbb{A}^{0}$.\\
Thus we have a stratification of $Z$ by affine strata as $Z=\coprod_{i=0}^{6} \overline{A}_{i} $.\\
	
We consider now $U=\left(\mathbb{P}^{6}\times\mathbb{P}^{6}\right)\backslash Z$. Then $U_{0}:=U\cap A_{0}$ is isomorphic to $\mathbb{A}^{12}$ minus $7$ copies of $\mathbb{A}^{6}$ via the map
\[
	\begin{tikzcd}
	\left(\left[1: \frac{p_{1}}{p_{0}}:\frac{p_{2}}{p_{0}}:\frac{p_{3}}{p_{0}}:
	\frac{p_{4}}{p_{0}}:\frac{p_{5}}{p_{0}}:\frac{p_{6}}{p_{0}}\right],
	\left[1: \frac{q_{1}}{q_{0}}:\frac{q_{2}}{q_{0}}:
	\frac{q_{3}}{q_{0}}:\frac{q_{4}}{q_{0}}:
	\frac{q_{5}}{q_{0}}:\frac{q_{6}}{q_{0}}\right]\right) 
	\arrow[d,mapsto]\\
	\left(\frac{p_{1}}{p_{0}},\frac{p_{2}}{p_{0}},\frac{p_{3}}{p_{0}},\frac{p_{4}}{p_{0}},\frac{p_{5}}{p_{0}},\frac{p_{6}}{p_{0}}, \frac{q_{1}}{q_{0}},\frac{q_{2}}{q_{0}},\frac{q_{3}}{q_{0}},\frac{q_{4}}{q_{0}},\frac{q_{5}}{q_{0}},\frac{q_{6}}{q_{0}}\right).
		\end{tikzcd}
\]
Iterating the process as above, we get
\begin{gather*}
	\begin{cases}
		T_{-1}:=U \\
		T_{i}:=U\cap \left(\bigcap_{j=0}^{i}B_{j}\right)\\						
		U_{i}:=T_{i-1}\cap A_{i}
	\end{cases} \quad \text{ for }  i\geq 0;
%
\end{gather*}
In particular we get that $U_{0}\cong \mathbb{A}^{12}\backslash \coprod^{7}\mathbb{A}^{6}$, $U_{1}\cong \mathbb{A}^{10}\backslash \coprod^{4}\mathbb{A}^{5}$, $U_{2}\cong \mathbb{A}^{8}\backslash \coprod^{3}\mathbb{A}^{4}$, $U_{3}\cong \mathbb{A}^{6}\backslash \coprod^{3}\mathbb{A}^{3}$, $U_{4}\cong \mathbb{A}^{4}\backslash \coprod^{2}\mathbb{A}^{1}$, $U_{5}\cong \mathbb{A}^{2}\backslash \coprod^{2}\mathbb{A}^{1}$.

Next we need to stratify $V_{0}:=U\cap C_{0}$ and $W_{0}:=U\cap D_{0}$, in order to obtain affine strata. \\
\begin{figure}[!htb]
\centering
\begin{tikzpicture}[grow=right,transform shape, scale=0.67, level distance=4cm]
\Tree 
[.$V_{0}\cap B_{1}$
	[.$\cap B_{2}$
		[.$\cap B_{3}$
			[.$\cap B_{4}$
				[.$\cap B_{5}$
				]
				[.$\cap C_{5}$
				]
				[.$\cap A_{5}$
				]
				[.$\cap D_{5}$
				]
			]
			[.$\cap C_{4}$
			[.$\cap B_{5}$
				]
				[.$\cap C_{5}$
				]
				[.$\cap A_{5}$
				]
				[.$\cap D_{5}$
				]
			]
			[.$\cap A_{4}$
			]
			[.$\cap D_{4}$
			]
		]
		[.$\cap C_{3}$
		[.$\cap B_{4}$
				[.$\cap B_{5}$
				]
				[.$\cap C_{5}$
				]
				[.$\cap A_{5}$
				]
				[.$\cap D_{5}$
				]
			]
			[.$\cap C_{4}$
			[.$\cap B_{5}$
				]
				[.$\cap C_{5}$
				]
				[.$\cap A_{5}$
				]
				[.$\cap D_{5}$
				]
			]
		]
		[.$\cap A_{3}$
		]
		[.$\cap D_{3}$
		]
	]
	[.$\cap C_{2}$
		[.$\cap B_{3}$
		[.$\cap B_{4}$
				[.$\cap B_{5}$
				]
				[.$\cap C_{5}$
				]
				[.$\cap A_{5}$
				]
				[.$\cap D_{5}$
				]
			]
			[.$\cap C_{4}$
			[.$\cap B_{5}$
				]
				[.$\cap C_{5}$
				]
				[.$\cap A_{5}$
				]
				[.$\cap D_{5}$
				]
			]
		]
		[.$\cap C_{3}$
		[.$\cap B_{4}$
				[.$\cap B_{5}$
				]
				[.$\cap C_{5}$
				]
				[.$\cap A_{5}$
				]
				[.$\cap D_{5}$
				]
			]
			[.$\cap C_{4}$
			[.$\cap B_{5}$
				]
				[.$\cap C_{5}$
				]
				[.$\cap A_{5}$
				]
				[.$\cap D_{5}$
				]
			]
		]
		[.$\cap A_{3}$
		]
		[.$\cap D_{3}$
		]
	]
	[.$\cap A_{2}$
	]
	[.$\cap D_{2}$
	]
]
] 
\end{tikzpicture}
\caption{Stratification of $V_{0}\cap B_{1}$}
\label{fig:graph}
\end{figure}

To stratify $V_{0}$ we proceed by steps, at the first step we consider the intersections $V_{0}\cap A_{1}\cong \mathbb{A}^{11}\backslash \{0\}\times \mathbb{A}^{10}$, $V_{0}\cap D_{1}\cong \mathbb{A}^{10}$, $V_{0}\cap C_{1}$ and $V_{0}\cap B_{1}$ which are not affine and require further stratifications. Iterating this process, 
at the $i$--th step we intersect with $A_{i+1}, B_{i+1}, C_{i+1}, D_{i+1}$ getting a tree structure as we can see in the graph in Fig. \ref{fig:graph}. Each time we intersect with $A_{i}$ or $D_{i}$ we get an affine strata, when we intersect with $B_{i}$ or $C_{i}$ we have to take a step further. The process end for each branch after five steps, when we intersect with $A_{5}, B_{5}, C_{5}$ and $D_{5}$. For example $V_{0}\cap B_{1}\cap B_{2}\cap C_{3}\cap B_{4}\cap D_{5}\cong \mathbb{A}^{3}\backslash \{0\}\times \mathbb{A}^{2}$.\\
We can apply the same process to stratify $W_{0}$.

So we can see $U=\coprod_{i=0}^{6}\left(U_{i}\coprod V_{i} \coprod W_{i}\right)$ as a disjoint union of varieties of type $\mathbb{A}^{k}\backslash L$, where $L$ is a finite union of linearly embedded affine spaces. By Lemma \ref{lem:res}(2), $U$ has the strong property and so does $Z$.
Since $X_{Z}=\pi^{-1}(Z)$ is a fibration over $Z$, whose fiber are product of projective spaces $\mathbb{P}^{8}\times \mathbb{P}^{8}\times \mathbb{P}^{8}\times \mathbb{P}^{8}$, then, by means of Lemma \ref{lem:res}(3), $X_{Z}$ has the strong property too. With the same argument, $X_{U}=\pi^{-1}(U)$ has the strong property since it is a fibration over $U$ with fiber $\mathbb{P}^{7}\times \mathbb{P}^{7}\times\mathbb{P}^{7}\times \mathbb{P}^{7}$. Then, by Lemma \ref{lem:res}(1) we conclude that $X$ has the strong property, and in particular it has trivial Chow groups.
\end{proof}

\subsection{Proof of \eqref{eq:key}}
\label{sec:proof}
It is left to prove the isomorphism \eqref{eq:key}, i.e. when dealing with homologically trivial 0--cycles on a Todorov surface, we can actually move the problem to the associated $K3$ surface to gain some more information. 


\begin{teo}
\label{teo:cyc}
Let $S$ be a Todorov surface with fundamental invariants $(\alpha,k)=(2,12)$.
Let $M$ be the associated $K3$ surface to $S$.
Then there is an isomorphism 
\[
A^{2}_{hom}(S)_{\mathbb{Q}}\cong A^{2}_{hom}(M)_{\mathbb{Q}}.
\]
\end{teo}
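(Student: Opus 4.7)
The plan is to prove the isomorphism by showing that the pullback $f^{*}\colon A^{2}_{\mathrm{hom}}(M)_{\mathbb{Q}}\to A^{2}_{\mathrm{hom}}(S)_{\mathbb{Q}}$ along the double cover is an isomorphism. Since $f$ has degree $2$ we have $f_{*}f^{*}=2\cdot\id$ and $f^{*}f_{*}=\id+\sigma^{*}$ with rational coefficients, so $f^{*}$ is injective and its image is exactly the $\sigma$-invariant part of $A^{2}(S)_{\mathbb{Q}}$. The whole statement therefore reduces to the Bloch-type assertion that the covering involution $\sigma$ acts as the identity on $A^{2}_{\mathrm{hom}}(S)_{\mathbb{Q}}$. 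At the level of cohomology this is already clear: because $M=S/\sigma$ is $K3$, the generator of $H^{2,0}(S)$ is $\sigma$-invariant, so $\sigma^{*}=\id$ on $H^{2,0}(S)$; our task is to propagate this identity to the rational Chow group.

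To do so I will spread cycles over the complete family $\mathcal{S}\to B$ of Section~\ref{subsec:desc}, in the spirit of Voisin \cite{voisin2013generalized,voisin2015generalized} and following the strategy used by Laterveer \cite{laterveer2018algebraic} for the analogous $(1,10)$ case. The relevant relative correspondence is
\[
\Gamma := \Delta_{\mathcal{S}/B}-\Gamma_{\sigma}\in A^{2}(\mathcal{S}\times_{B}\mathcal{S})_{\mathbb{Q}},
\]
whose fiberwise action on $A^{2}(S_{b})_{\mathbb{Q}}$ is $\id-\sigma^{*}$. Since $\sigma^{*}$ fixes $H^{2,0}(S_{b})$ on every fiber, a relative Bloch--Srinivas type argument will produce a cohomological decomposition $[\Gamma]=[\Gamma']$ in $H^{4}(\mathcal{S}\times_{B}\mathcal{S},\mathbb{Q})$ with $\Gamma'$ supported on a relative divisorial locus of the form $\mathcal{D}_{1}\times_{B}\mathcal{S}\cup\mathcal{S}\times_{B}\mathcal{D}_{2}$ for suitable relative divisors $\mathcal{D}_{i}\subset\mathcal{S}$ (possibly after a finite \'etale base change on $B$, which does not affect the Chow-theoretic conclusion on a general fiber).

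The vanishing $A^{2}_{\mathrm{hom}}(\mathcal{S}\times_{B}\mathcal{S})_{\mathbb{Q}}=0$ from Proposition~\ref{prop:key} then upgrades this cohomological equality to a rational equivalence $\Gamma=\Gamma'$ in $A^{2}(\mathcal{S}\times_{B}\mathcal{S})_{\mathbb{Q}}$. Restricting to a general fiber yields $\Delta_{S}-\Gamma_{\sigma}=\Gamma'|_{S\times S}$ in $A^{2}(S\times S)_{\mathbb{Q}}$, where the right-hand side is supported on $D_{1}\times S\cup S\times D_{2}$. The action of such a divisorial correspondence on $A^{2}_{\mathrm{hom}}(S)_{\mathbb{Q}}$ factors through $A^{1}(S)_{\mathbb{Q}}$ and is therefore zero. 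It follows that $\id=\sigma^{*}$ on $A^{2}_{\mathrm{hom}}(S)_{\mathbb{Q}}$, and hence $f^{*}$ is an isomorphism as required.

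The main obstacle will be the globalisation step in the second paragraph: lifting the fiberwise triviality of the $\Gamma_{b}$-action on $H^{2,0}(S_{b})$ to a bona-fide cohomological splitting of $[\Gamma]$ on the total fiber product $\mathcal{S}\times_{B}\mathcal{S}$. This requires a relative Lefschetz $(1,1)$-statement and careful control over the variation of Hodge structures carried by $\mathcal{S}\to B$. The explicit description of the complete family obtained in Section~\ref{sec:desc}, together with the triviality of the Chow groups of the ambient variety $X$ established in Proposition~\ref{prop:trivial}, is exactly what makes this step tractable; the remainder of the argument is essentially formal.
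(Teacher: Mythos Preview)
Your strategy is essentially the paper's: the correspondence $\Delta_{\mathcal{S}}-\Gamma_{\sigma}$ coincides with the paper's $2\Delta_{\mathcal{S}}-{}^{t}\Gamma_{f}\circ\Gamma_{f}$ via the identity ${}^{t}\Gamma_{f}\circ\Gamma_{f}=\Delta+\Gamma_{\sigma}$, and the key input $A^{2}_{\mathrm{hom}}(\mathcal{S}\times_{B}\mathcal{S})_{\mathbb{Q}}=0$ from Proposition~\ref{prop:key} is used in the same way. Your observation that $[\Gamma_{b}]$ already lies in $H^{1,1}\otimes H^{1,1}$ (so composing with $\pi_{2}$ is unnecessary) is a mild simplification over the paper.

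However, the shape you claim for $\Gamma'$ and the passage from fiberwise to global are not quite right. Fiberwise Lefschetz~$(1,1)$ gives $[\Gamma_{b}]=[\gamma_{b}]$ with $\gamma_{b}=\sum c_{ij}D_{i}\times D_{j}$ supported on $Y_{b}\times Y_{b}$, and Voisin's spreading produces a relative $\gamma$ supported on $\mathcal{Y}\times_{B}\mathcal{Y}$ with $[\Gamma_{b}]=[\gamma|_{b}]$ for each $b$. This is only a \emph{fiberwise} cohomological equality; it does \emph{not} yield $[\Gamma]=[\gamma]$ in $H^{4}(\mathcal{S}\times_{B}\mathcal{S},\mathbb{Q})$. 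The paper closes this gap with a Leray spectral sequence argument (Lemma~\ref{lem:Leray}, after \cite[Lemmas~2.11--2.12]{voisin2013generalized}): a fiberwise-trivial class is the restriction of classes $\alpha_{1},\alpha_{2}$ living on $B\times(\mathbb{P}^{6}/G)\times(\mathbb{P}^{6}/G)$. Only then does $\Gamma-\gamma-(\alpha_{1}+\alpha_{2})|_{\mathcal{S}\times_{B}\mathcal{S}}$ land in $A^{2}_{\mathrm{hom}}(\mathcal{S}\times_{B}\mathcal{S})_{\mathbb{Q}}=0$. So your $\Gamma'$ is not of the Bloch--Srinivas shape $\mathcal{D}_{1}\times_{B}\mathcal{S}\cup\mathcal{S}\times_{B}\mathcal{D}_{2}$, but rather a sum of a $\mathcal{Y}\times_{B}\mathcal{Y}$-supported piece and an ambient-restricted piece.

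Correspondingly, your justification that $\Gamma'$ ``factors through $A^{1}(S)_{\mathbb{Q}}$'' needs adjusting. A correspondence supported on $S\times D_{2}$ need not act trivially on $A^{2}_{\mathrm{hom}}(S)_{\mathbb{Q}}$: its image lands in $\iota_{*}A_{0}(D_{2})$, which is not a priori zero there. The actual reasons the terms vanish are: $\gamma_{b}$ is supported on curve~$\times$~curve, so its action factors through $A^{2}(\text{curve})=0$; and the $\alpha_{i}$ act through $A^{2}_{\mathrm{hom}}(\mathbb{P}^{6}/G)_{\mathbb{Q}}=0$ since $\mathbb{P}^{6}/G$ has trivial Chow groups.
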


\begin{proof}
%

We look for a correspondence in $A^{2}(\mathcal{S}\times_{B}\mathcal{S})$ that is homologically trivial when restricted to each fiber .\\
Let $\Delta_{\mathcal{S}}\in A^{2}(\mathcal{S}\times_{B}\mathcal{S})$ denote the relative diagonal. We consider the following relative correspondence
\[
\Gamma=2\Delta_{\mathcal{S}}-^{t}\Gamma_{f}\circ \Gamma_{f}\in A^{2}(\mathcal{S}\times_{B}\mathcal{S}),
\]
where $\Gamma_{f}\subset \mathcal{S}\times \overline{\mathcal{M}}$ is the correspondence given by the graph of $f\colon \mathcal{S}=\mathcal{V}/G\to \overline{\mathcal{M}}$, and $^{t}\Gamma_{f}$ is the transpose correspondence. We denote the restriction to the fiber as $\Gamma_{b}:=\Gamma\vert_{S_{b}\times S_{b}}$.\\
Looking at the action induced by $\Gamma_{b}$ on cohomology we get
\[
(\Gamma_{b})_{*}=2\id_{H^{*}(S_{b})}-(f_{b})^{*}(f_{b})_{*}\colon H^{*}(S_{b},\mathbb{Q})\to H^{*}(S_{b},\mathbb{Q}).
\]
We claim that the action of $\Gamma_{b}$ is zero on $H^{2,0}(S_{b})$. 
This is true if and only if $(f_{b})^{*}(f_{b})_{*}=2\id$ on $H^{2,0}(S_{b})$. 
By \cite[Lemma 1]{Inose1979} $(f_{b})^{*}(f_{b})_{*}=(\Delta_{S_{b}})_{*}+\sigma_{*}$ (we point out that this holds also in the singular case by Remark \ref{rem:sing}).
If $\sigma_{*}$ acts as $-\id$ on $H^{2,0}(S_{b})$, then it implies that $H^{2,0}(\overline{M_b})=H^{2,0}(S_b/\sigma)=0$, but since $h^{2,0}(\overline{M}_{b})=1=h^{2,0}(S_{b})$, the involution $\sigma_{*}$ acts as the identity on $H^{2,0}(S_{b})$.
So we get $(f_{b})^{*}(f_{b})_{*}=(\Delta_{S_{b}})_{*}+\sigma_{*}=2\id_{H^{2,0}(S_{b})}$ and the claim is proved.\\
Now we consider the K\"{u}nneth decomposition of the diagonal of $S_{b}$
\[
\left[\Delta(S_{b})\right]=\sum_{i=0}^{4}\left[\pi_{i}^{b}\right]=\left[\pi^{b}_{0}\right]+\left[\pi^{b}_{2}\right]+\left[\pi^{b}_{4}\right]\in H^{4}(S_{b}\times S_{b},\mathbb{Q}),
\]
where $\left[\pi_{i}^{b}\right]\in H^{4-i}(S_{b},\mathbb{Q})\otimes H^{i}(S_{b},\mathbb{Q})\subset H^{4}(S_{b}\times S_{b},\mathbb{Q})$ is the $i$--th K\"{u}nneth component. The first and third components are zero, due to the fact that $q(S_{b})=h^{1,0}(S_{b})=0$.
Since the K\"{u}nneth conjecture $C(X)$ is known to be true for surfaces (\cite[ch. 3.1.1]{murre2013lectures}), we know that the K\"{u}nneth components are algebraic, i.e. they come from algebraic cycles $\pi^{b}_{i}\in A^{2}(S_{b}\times S_{b})_{\mathbb{Q}}$.

We recall that the action of $\pi_{i}^{b}$ in cohomology is the identity on $H^{i}(S_{b},\mathbb{Q})$ and it is zero elsewhere (\cite[Ch. 6.1]{murre2013lectures}). We are mainly interested in the second component $\pi^{b}_{2}=\Delta(S_{b})-\pi^{b}_{0}-\pi^{4}_{b}$, where $\pi^{b}_{0}=\{x\}\times S_{b}$, $\pi^{b}_{4}=S_{b}\times \{x\}$, and $x$ is a point in $S_{b}$.

\begin{rem}
We notice that $\pi^{b}_{2}$ exists also relatively
, i.e. there exists $\pi^{\mathcal{S}}_{2}=\Delta_{\mathcal{S}}-\pi_{0}-\pi_{4} \in A^{2}(\mathcal{S}\times_{B} \mathcal{S})$ such that for any $b\in B$ $\pi^{\mathcal{S}}_{2}\vert_{b}=\pi^{b}_{2}$ and $\pi_{i}|_{b}=\pi_{i}^{b}$ for any $i$. Indeed, let us consider the class of an ample divisor $h\in A^{1}(\mathbb{P}^{6})$, and its self--intersection $h^{2}=h\cdot h\in A^{2}(\mathbb{P}^{6})$. Next we consider $h^{2}\times B\in A^{2}(\mathbb{P}^{6}\times B)$, and its restriction to $\mathcal{V}\subset B\times \mathbb{P}^{6}$, i.e. $\overline{h}:=(h^{2}\times B)\vert_{\mathcal{V}}\in A^{2}(\mathcal{V})$. Looking at the fiber, we have that for any point $b\in B$
\[
	\overline{h}\vert_{V_{b}}=\{x_{0},\dots, x_{d}\},
\]
where $d=\deg V_{b}=16$. Then we define
\begin{gather*}
\pi_{0}^{\mathcal{V}}:=\frac{1}{d}pr_{1}^{*}\left(\overline{h}\vert_{\mathcal{V}}\right)\in A^{2}(\mathcal{V}\times_{B}\mathcal{V});\\
\pi_{4}^{\mathcal{V}}:=\frac{1}{d}pr_{2}^{*}\left(\overline{h}\vert_{\mathcal{V}}\right)\in A^{2}(\mathcal{V}\times_{B}\mathcal{V});
\end{gather*}
where $pr_{1},pr_{2}$ are the projections in the fiber product
\[
	\begin{tikzcd}
		\mathcal{V}\times_{B}\mathcal{V}\arrow[r, "pr_{2}"] \arrow[d,  "pr_{1}"']&
		 \mathcal{V}\arrow[d]\\
		\mathcal{V}\arrow[r]&B.
	\end{tikzcd}
\]
When we restrict to each fiber and we pass to cohomology, by the Künneth decomposition, we have
\begin{gather*}
	[\pi_{0}^{\mathcal{V}}]\vert_{V_{b}}=[p]\times [V_{b}]\in H^{4}(V_{b},\mathbb{Q})\otimes H^{0}(V_{b},\mathbb{Q}),\\
	[\pi_{4}^{\mathcal{V}}]\vert_{V_{b}}=[V_{b}]\times [p]\in H^{0}(V_{b},\mathbb{Q})\otimes H^{4}(V_{b},\mathbb{Q});
\end{gather*}
where $p\in V_{b}$ is a point.
So we can define the relative Künneth component of the diagonal $\pi^{\mathcal{V}}_{2}=\Delta_{\mathcal{V}}-\pi_{0}^{\mathcal{V}}-\pi_{4}^{\mathcal{V}} \in A^{2}(\mathcal{V}\times_{B} \mathcal{V})$. The we can use the push--forward of $\mathcal{V}\overset{q}{\rightarrow}\mathcal{S}=\mathcal{V}/G$ to get the relative Künneth component of $\mathcal{S}$: $\pi_{2}^{\mathcal{S}}=\deg(q)\Delta_{\mathcal{S}}-q_{*}\pi_{0}^{\mathcal{V}}-q_{*}\pi_{4}^{\mathcal{V}}$.
\end{rem}

\begin{lem}
\label{lem:psib}
The correspondence $\Psi_{b}$ defined as
\[
\Psi_{b}:=\Gamma_{b}\circ \pi^{b}_{2}=\left(2\Delta(S_{b})-^{t}\Gamma_{f_{b}}\circ \Gamma_{f_{b}}\right)\circ \pi^{b}_{2}\in A^{2}(S_{b}\times S_{b})_{\mathbb{Q}}.
\]
has its cohomology class in $H^{4}(S_{b}\times S_{b},\mathbb{Q})\cap \left(H^{1,1}(S_{b})\otimes H^{1,1}(S_{b})\right)$.
\end{lem}
 \begin{proof}
 By definition of $\pi^{b}_{2}$, when we look at the action in cohomology we have that $\Psi_{b}$ acts only on $H^{2}(S_{b}, \mathbb{Q})$. Moreover, since we proved that the action of $\Gamma_{b}$ is zero on $H^{2,0}(S_{b})$, we conclude that the class of $\Psi_{b}$ lives in $H^{1,1}(S_{b})\otimes H^{1,1}(S_{b})$.
 \end{proof}

%
By the previous remark, we can consider also the relative correspondence
\[
\Psi:=\Gamma\circ \pi^{\mathcal{S}}_{2}=(2\Delta_{\mathcal{S}}-^{t}\Gamma_{f}\circ \Gamma_{f})\circ \pi^{\mathcal{S}}_{2}\in A^{2}(\mathcal{S}\times_{B} \mathcal{S})_{\mathbb{Q}},
\]
where $\Psi\vert_{b}=\Psi_{b}$.\\
By Lemma \ref{lem:psib} we can apply Lefschetz Theorem on $(1,1)$ classes (\cite[Proposition 3.3.2]{huybrechts2006complex}) on $\Psi_{b}$ for each $S_{b}$ with $b\in B$, hence $\Psi_{b}$ comes from an algebraic class of the form
$\sum_{i,j} n_{i,j}Z_{b}^{i}\times Z^{j}_{b}$. 
We get that there exists a divisor 
\[
	Y_{b}=\cup_{n_{i,j}\neq 0}Z^{i}_{b}\subset S_{b}
\]
 and a cycle $\gamma_{b}\in A^{2}(S_{b}\times S_{b})_{\mathbb{Q}}$ such that $\mathrm{Supp} (\gamma_{b})\subseteq Y_{b}\times Y_{b}$ and 
\[
\left[\Psi_{b}\right]=\left[\gamma_{b}\right]\in H^{4}(S_{b}\times S_{b},\mathbb{Q}).
\]  
Now we apply Voisin's ``spreading of cycles'' \cite[Proposition 3.7]{voisin2013generalized}, to get that $\gamma_{b}$ exists relatively. 

\begin{prop}[Voisin's spreading of cycles]
Assume that for a very general point $b\in B$, there exists a divisor $Y_{b}\subset S_{b}$, and an algebraic cycle $\gamma_{b}\subset Y_{b}\times Y_{b}$ with $\mathbb{Q}$--coefficients,
such that
\[
	[\Psi_{b}] = [\gamma_{b} ] \hbox{ in } H^{2k} (S_{b} \times S_{b} , \mathbb{Q}).
\]
Then there exists a divisor $\mathcal{Y}\subset \mathcal{S}$ and a cycle $\gamma\in A^{2}(\mathcal{S}\times_{B}\mathcal{S})_{\mathbb{Q}}$ supported on $\mathcal{Y}\times_{B}\mathcal{Y}$ such that
\[
\left[\Psi_{b}\right]=\left[\gamma\vert_{b}\right]\in H^{4}(S_{b}\times S_{b},\mathbb{Q}).
\]
\end{prop}
Finally we can define the correspondence
\[
\Psi':=\Psi-\gamma=(2\Delta_{\mathcal{S}}-^{t}\Gamma_{f}\circ \Gamma_{f})\circ \pi^{\mathcal{S}}_{2}-\gamma \in A^{2}(\mathcal{S}\times_{B}\mathcal{S})_{\mathbb{Q}}.
\]
Then $\Psi'$ has the desired property of being homologically trivial when restricted to any fiber, i.e.  for any $b\in B$
\[
\left[\Psi'\vert_{b}\right]=\left[\Psi_{b}\right]-\left[\gamma\vert_{b}\right]=0\in H^{4}(S_{b}\times S_{b},\mathbb{Q}).
\]

Now we want to apply the Leray spectral sequence argument as in \cite[proof of Theorem 3.1]{laterveer2018algebraic}. We recall a result due to Voisin \cite[Lemma 3.11,3.12]{voisin2013generalized} which gives a decomposition of a homologically trivial cycle which vanishes on the fibers.
\begin{lem}
\label{lem:Leray}
Let $\left[\Psi'\right]\in H^{4}(\mathcal{S}\times_{B}\mathcal{S}, \mathbb{Q})$ be a fiberwise homologically trivial cohomology class, i.e. $\left[\Psi'\vert_{b}\right]=0$ for any $b\in B$. Then 
\[
\left[\Psi'\right]=\beta_{1}\vert_{\mathcal{S}\times_{B}\mathcal{S}}+\beta_{2}\vert_{\mathcal{S}\times_{B}\mathcal{S}},
\]
where $\beta_{1}\in H^{4}((\mathbb{P}^{6}/G)\times \mathcal{S}, \mathbb{Q})$ and  $\beta_{2}\in H^{4}( \mathcal{S}\times( \mathbb{P}^{6}/G), \mathbb{Q})$.\\
Moreover, since $\mathbb{P}^{6}/G$ has trivial Chow groups and $\left[\Psi'\right]$ is algebraic, we can choose $\beta_{1},\beta_{2}$ to be the classes of the restriction of algebraic cycles on $B\times (\mathbb{P}^{6}/G)\times (\mathbb{P}^{6}/G)$. 
\end{lem}

\begin{rem}
The proof of Lemma \ref{lem:Leray} relays on the analysis of the graded pieces of the Leray filtration given by the Leary spectral sequence of $(\pi,\pi)\colon \mathcal{S}\times_{B}\mathcal{S}\to B$. 
Note that in Voisin’s paper \cite{voisin2013generalized} the ambient space is smooth, whereas in our case it is a quotient variety $\mathbb{P}^6/G$. However, it is still possible to apply the result in our setting since we can move everything to $\mathbb{P}^6$ (which is smooth and so Voisin’s argument applies), and use that
$H^\ast( \mathbb{P}^{6}/G,\mathbb{Q})$ is just the $G$--invariant part of $H^\ast(\mathbb{P}^{6},\mathbb{Q})^G$. Since the argument in the proof of Lemma \ref{lem:Leray} is compatible with the action of $G$, it also apply to our case in which the singularities are only rational.
 Moreover, we notice that $\mathbb{P}^{6}/G$ has trivial Chow groups. 
\end{rem}

By means of Lemma \ref{lem:Leray}, we have 
\[
\left[\Psi'\right]=\beta_{1}\vert_{\mathcal{S}\times_{B}\mathcal{S}}+\beta_{2}\vert_{\mathcal{S}\times_{B}\mathcal{S}}=\left[\alpha_{1}\right]\vert_{\mathcal{S}\times_{B}\mathcal{S}}+\left[\alpha_{2}\right]\vert_{\mathcal{S}\times_{B}\mathcal{S}}
\]
with $\beta_{i}=\left[\alpha_{i}\right]\vert_{\mathcal{S}\times_{B}\mathcal{S}}$ and $\alpha_{i}\in A^{2}(B\times (\mathbb{P}^{6}/G)\times( \mathbb{P}^{6}/G))$.
We can define 
\[
\left[\Psi''\right]=\left[\Psi'\right]-\left(\left[\alpha_{1}\right]+\left[\alpha_{2}\right]\right)\vert_{\mathcal{S}\times_{B}\mathcal{S}}=0\in H^{4}(\mathcal{S}\times_{B}\mathcal{S}, \mathbb{Q}).
\]
We notice that $\left[\Psi''\right]$ is algebraic because it's the difference between algebraic cycles, so  $\Psi''\in A^{2}_{hom}(\mathcal{S}\times_{B}\mathcal{S})_{\mathbb{Q}}=0$, where the last equality holds by Proposition \ref{prop:key}.\\
 Then we have that 
 \begin{gather*}
 \Psi''=0 \hbox{ in } A^{2}_{hom}(\mathcal{S}\times_{B}\mathcal{S})_{\mathbb{Q}},\\
 \Psi'=(2\Delta_{\mathcal{S}}-^{t}\Gamma_{f}\circ \Gamma_{f})\circ \pi^{\mathcal{S}}_{2}-\gamma=(\alpha_{1}+\alpha_{2})\vert_{\mathcal{S}\times_{B}\mathcal{S}} \hbox{ in } A^{2}_{hom}(\mathcal{S}\times_{B}\mathcal{S})_{\mathbb{Q}}.
 \end{gather*}
 When we restrict to each fiber, and we look at the action on cycles, we get $\forall b\in B$:
 \begin{align*}
 	2\id_{*}=&(2\Delta_{S_{b}}\circ \pi_{2}^{b})_{*}\colon A_{\mathrm{hom}}^{2}(S_{b})_{\mathbb{Q}}\to 
	A_{\mathrm{hom}}^{2}(S_{b})_{\mathbb{Q}} \\
	=&(f_{b})^{*}(f_{b})_{*}(\pi_{2}^{b})_{*}+(\gamma_{b})_{*}+
	(\alpha_{1}+\alpha_{2})\vert_{S_{b}\times S_{b},\ast}(S_{b})_{\mathbb{Q}}\\
	=&(f_{b})^{*}(f_{b})_{*}+(\gamma_{b})_{*}+
	(\alpha_{1}+\alpha_{2})\vert_{S_{b}\times S_{b},\ast}(S_{b})_{\mathbb{Q}}
	,
 \end{align*}
 where last equality holds since $\pi_{2}^{b}$ acts as the identity on $A^2_{\mathrm{hom}}(S_{b})_{\mathbb{Q}}$.
 We recall that $\gamma_{b}$ is supported on a divisor,  
 hence it acts trivially on $A_{0}(S_{b})$ and $\alpha_{1}+\alpha_{2}\in A^{2}(B\times (\mathbb{P}^{6}/G)\times (\mathbb{P}^{6}/G))$. So on the right, the only term that acts on homological trivial $0$--cycles is $(f_{b})^{*}(f_{b})_{*}$. We get
 \[
 	(f_{b})^{*}(f_{b})_{*}=2\id_{*}\colon A_{\mathrm{hom}}^{2}(S_{b})\to A_{\mathrm{hom}}^{2}(S_{b}) \quad \text{where} 
	\begin{tikzcd}
				\sigma \curvearrowright S_{b}=V_{b}/G 
				\arrow[d, "2:1", "f_{b}"' ]\\
			M_{b}.
		\end{tikzcd}
\]
Since by projection formula we have $(f_{b})_{*}(f_{b})^{*} = \deg(f_{b})\id$, we conclude that $A^{2}_{\mathrm{hom}}(S)_{\mathbb{Q}}\cong A^{2}_{\mathrm{hom}}(M)_{\mathbb{Q}}.$

\end{proof}

\section{Motives}
\label{sec:mot}
Here we present the motivic version of Theorem \ref{teo:Tod} with some interesting corollaries. The central result is that a Todorov surface of type $(2,12)$ has the transcendental part of the motive isomorphic to the associated K3 surface's one (in the sense of \cite{MR2187153}). \\

First of all we briefly recall the definition of the Chow--Künneth decomposition, which always exists for a smooth projective surface (see \cite{MR1061525}, \cite[Proposition 2.1]{MR2187153}).
\begin{defn}
\label{defn:CKd}
Let $S$ be a smooth projective surface and let $h(S)\in \mathcal{M}_{rat}$ denote the Chow motive\footnote{For the definition of Chow motive of a smooth projective variety see for example \cite[Chapter 2]{murre2013lectures}.} of $S$. Then there exists a \emph{Chow--Künneth decomposition} of $h(s)$ in $\mathcal{M}_{rat}$
\[
	h(S)=\bigoplus_{i=0}^{4}h_{i}(S),
\]
where $h_{i}(S)=(S,\pi_{i},0)$, $\pi_{i}\in A^{2}(S\times S)$ are orthogonal projectors, i.e. $\pi_{i}\circ\pi_{i}=\pi_{i}$ and $\pi_{i}\circ\pi_{j}=0$ for $i\neq j$, and they are the Künneth components of the diagonal $\Delta_{S}$, i.e.
\begin{gather*}
	[\Delta_{S}]=\sum_{i=0}^{4}[\pi_{i}]\in H^{4}(S\times S,\mathbb{Q}),\\
	\cl^{2}(\pi_{i})\in H^{4-i}(S,\mathbb{Q})\otimes H^{i}(S,\mathbb{Q})\subset H^{4}(S\times S,\mathbb{Q}).
\end{gather*}
In particular, this decomposition is self--dual in the sense that $\pi_{i}=\pi^{t}_{4-i}$ (where $\pi^{t}_{4-i}$ denotes the transpose correspondence of $\pi_{4-i}$).
\end{defn}

In order to study the groups of $0$--cycles $A_{0}(S)$, Bloch's conjecture suggests that the interesting part of this decomposition is $h_{2}(S)=(S,\pi_{2},0)$ where
$\pi_{2}=\Delta_{S}-\pi_{0}-\pi_{1}-\pi_{3}-\pi_{4}$. To study this summand we use a further decomposition due to Kahn--Murre--Pedrini \cite[Proposition 2.3]{MR2187153}.

\begin{prop}[Kahn--Murre--Pedrini]
\label{prop:tr}
Let $S$ be a smooth projective surface with a Chow--Künneth decomposition as in Definition \ref{defn:CKd}. There there is a unique splitting in orthogonal projectors
\[
	\pi_{2}=\pi_{2}^{\mathrm{alg}}+\pi_{2}^{\mathrm{tr}} 
	\hbox{ in } A^{2}(S\times S)_{\mathbb{Q}}.
\]	
This gives an induced decomposition on the motive
\[
	h_{2}(S)\cong h_{2}^{\alg}\oplus t_{2}(S) \hbox{ in } \mathcal{M}_{rat},
\]
where $h_{2}^{\alg}(S)=(S,\pi_{2}^{\alg},0)$, $t_{2}(S,\pi_{2}^{\tr},0)$ and in cohomology we get
\[
	H^{*}(t_{2}(S),\mathbb{Q})=H^{2}_{\tr}(S), 
	\quad H^{*}(h^{\alg}_{2}(S),\mathbb{Q})=NS(S)_{\mathbb{Q}},
\]
where the transcendental cohomology $H^{2}_{\tr}(S)$ is defined as the orthogonal complement of the Néron--Severi group $NS(S)_{\mathbb{Q}}$ in $H^{2}(S,\mathbb{Q})$.\\ 
Moreover, we have that $A^{*}(t_{2}(S)=A^{2}_{AJ}(S))$.
\end{prop}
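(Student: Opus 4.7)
The plan is to construct $\pi_2^{\alg}$ explicitly from divisor classes and then set $\pi_2^{\tr} := \pi_2 - \pi_2^{\alg}$. The key ingredient is the Hodge index theorem, which makes the intersection pairing on $NS(S)_{\mathbb{Q}}$ non-degenerate. I would first fix a $\mathbb{Q}$-basis $D_1, \dots, D_\rho$ of $NS(S)_{\mathbb{Q}}$ and let $(m^{ij})$ denote the inverse of the intersection matrix $M_{ij} := D_i \cdot D_j$; then define
\[
\pi_2^{\alg} := \sum_{i,j=1}^{\rho} m^{ij}\, D_i \times D_j \ \in \ A^2(S \times S)_{\mathbb{Q}}.
\]

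Next, I would verify the projector properties in sequence. First, $\pi_2^{\alg} \circ \pi_2^{\alg} = \pi_2^{\alg}$ follows from the composition formula for correspondences together with $M \cdot M^{-1} = I$. Second, $\pi_2^{\alg}$ is orthogonal to each of the complementary Chow--Künneth projectors $\pi_0, \pi_1, \pi_3, \pi_4$: under standard constructions, these are supported on pieces of the form $\{pt\}\times S$, $S\times\{pt\}$, or on cycles parameterized by the Picard/Albanese variety, and the compositions vanish because divisors pair trivially either with point classes (in codimension $2$) or with classes coming from $H^{1}$. Setting $\pi_2^{\tr} := \pi_2 - \pi_2^{\alg}$, the properties $(\pi_2^{\tr})^2 = \pi_2^{\tr}$ and $\pi_2^{\alg}\circ \pi_2^{\tr} = \pi_2^{\tr}\circ \pi_2^{\alg} = 0$ then follow formally.

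For the cohomological claims, I compute that the action of $\pi_2^{\alg}$ on $H^{2}(S, \mathbb{Q})$ sends $\alpha$ to $\sum_{i,j} m^{ij}(\alpha \cdot D_i)\, D_j$, which is precisely the orthogonal projection onto $NS(S)_{\mathbb{Q}}$ under the intersection pairing. Hence $H^{*}(h_2^{\alg}(S), \mathbb{Q}) = NS(S)_{\mathbb{Q}}$ and, by complementarity, $H^{*}(t_2(S), \mathbb{Q}) = H^{2}_{\tr}(S)$. For the statement on Chow groups, I observe that $\pi_2^{\alg}$ acts as zero on $A^{2}(S)$: given $a \in A^{2}(S)$, the product $p_1^{*}a \cdot (D_i \times D_j) = p_1^{*}(a\cdot D_i)\cdot p_2^{*}D_j$ vanishes because $a\cdot D_i \in A^{3}(S) = 0$. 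Therefore $(\pi_2^{\tr})_{*} A^{2}(S) = (\pi_2)_{*} A^{2}(S)$, and a standard analysis of the action of $\pi_0, \pi_4$ (degree) and $\pi_1, \pi_3$ (Albanese) identifies this with $A^{2}_{AJ}(S)$.

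The main obstacle will be the uniqueness of the splitting. The difference of any two candidate decompositions is an element $\delta \in A^{2}(S\times S)_{\mathbb{Q}}$ whose cohomology class vanishes and whose action on both $NS(S)_{\mathbb{Q}}$ and $H^{2}_{\tr}(S)$ is trivial. To conclude $\delta = 0$ in $A^{2}(S\times S)_{\mathbb{Q}}$, I would invoke a Bloch--Srinivas-type decomposition argument, using that $\delta$ is supported (up to rational equivalence) on a product of divisors, combined with the non-degeneracy of the intersection pairing on $NS(S)_{\mathbb{Q}}$. This rigidity step is the subtlest part of the argument.
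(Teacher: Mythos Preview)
The paper does not supply a proof of this proposition: it is quoted verbatim as a result of Kahn--Murre--Pedrini with a reference to \cite[Proposition 2.3]{MR2187153}, and is used as a black box in the motivic applications of Section~\ref{sec:mot}. So there is no ``paper's own proof'' to compare against.

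That said, your sketch is essentially the construction carried out in the original Kahn--Murre--Pedrini paper: one defines $\pi_2^{\alg}$ by the formula $\sum m^{ij} D_i\times D_j$ using the non--degeneracy of the intersection form on $NS(S)_{\mathbb{Q}}$, checks the idempotent and orthogonality relations by direct computation, and reads off the cohomological and Chow--theoretic realizations exactly as you indicate. Two small comments. First, the uniqueness asserted in KMP is not quite the absolute uniqueness your last paragraph aims for; rather, the motive $t_2(S)$ is well--defined up to (unique) isomorphism, and the projector $\pi_2^{\tr}$ is unique once the ambient Chow--K\"unneth decomposition and the choice of $\pi_2^{\alg}$ satisfying certain normalizations are fixed. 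A Bloch--Srinivas argument alone will not pin down $\delta$ to be zero in $A^2(S\times S)_{\mathbb{Q}}$ without such normalizations. Second, for the orthogonality of $\pi_2^{\alg}$ with $\pi_1,\pi_3$ you should use the explicit Murre projectors (built from the Picard/Albanese maps) rather than a vague support argument; the vanishing then comes from $A^1(S)_{\mathbb{Q}}\cong NS(S)_{\mathbb{Q}}$ having no odd part.
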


The component $t^{2}(S)$ is called the \emph{transcendental part of the motive} of $S$.

Next we recall a useful result on the Chow--Künneth decomposition (\cite[Theorem 3.10]{MR2187153}).
\begin{teo}[Kahn--Murre--Pedrini]
\label{teo:KMP}
Let $S$ and $S'$ be two smooth projective surfaces with a Chow--Künneth decomposition 
\[
	h(S)=\bigoplus_{i=0}^{4}h_{i}(S), \quad h(S')=\bigoplus_{i=0}^{4}h_{i}(S'),
\]
as in Definition \ref{defn:CKd}. Then
\[
	\mathcal{M}_{rat}(h_{i}(S),h_{j}(S'))=0 \text{ for all } j<i \text{ and } 0\leq i \leq 4,
\]
where $\mathcal{M}_{rat}(h_{i}(S),h_{j}(S'))=\pi_{i}(S)\circ A^{2}(S\times S')\circ \pi_{i}(S')$ are the morphisms in the category $\mathcal{M}_{rat}$.
\end{teo}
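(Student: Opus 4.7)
The plan is to prove the vanishing by a case analysis on $(i,j)$ with $j<i$, using the standard construction of the Chow--Künneth projectors for a surface due to Murre. Recall that one can arrange $\pi_0(S)=[e]\times S$ and $\pi_4(S)=S\times[e]$ for a chosen base point $e\in S$, while $\pi_1(S)$ is supported on $C\times S$ and $\pi_3(S)=\pi_1(S)^t$ on $S\times C$ for a suitable smooth very ample curve $C\subset S$, and $\pi_2(S):=\Delta_S-\pi_0(S)-\pi_1(S)-\pi_3(S)-\pi_4(S)$; these projectors satisfy the orthogonality relations $\pi_k\circ\pi_l=\delta_{kl}\pi_k$ in $A^2(S\times S)_{\mathbb{Q}}$, and analogously on $S'$.

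First I would handle the pairs with $i,j\in\{0,1,3,4\}$ by direct support analysis. For $\alpha\in A^2(S\times S')_{\mathbb{Q}}$, the composition $\pi_j(S')\circ\alpha\circ\pi_i(S)$ is forced by the supports of the projectors to factor through correspondences between low-dimensional subvarieties (points or curves) of $S$ and $S'$. The vanishing for $j<i$ then reduces to two well-known facts: $\mathrm{Hom}_{\mathcal{M}_{rat}}(\mathbb{L}^a,\mathbb{L}^b)=A^{b-a}(\mathrm{Spec}\,\mathbb{C})_{\mathbb{Q}}=0$ for $a>b$, and its curve-level analogue $\mathrm{Hom}_{\mathcal{M}_{rat}}(h_a(C),h_b(C'))=0$ for $a>b$, which follows from the canonical Chow--Künneth decomposition of a smooth projective curve together with the Abel--Jacobi theorem. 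As a sample calculation, the case $(i,j)=(4,0)$ reduces to observing that $\alpha\circ\pi_4(S)=S\times\alpha_e$ for the $0$-cycle $\alpha_e\subset S'$ given by the fibre of $\alpha$ over $e$, after which $\pi_0(S')\circ(S\times\alpha_e)=0$ by intersection--dimension considerations on the surface $S'$.

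For the remaining cases, which involve $\pi_2(S)$ or $\pi_2(S')$, I would substitute the defining formula $\pi_2=\Delta-\pi_0-\pi_1-\pi_3-\pi_4$ and expand the triple composition. The cross-terms $\pi_j(S')\circ\alpha\circ\pi_k(S)$ with $k\in\{0,1,3,4\}$ and $k>j$ vanish by the first step, and the remaining contributions cancel using the orthogonality $\pi_j(S')\circ\pi_l(S')=\delta_{jl}\pi_j(S')$ together with the tautology $\pi_j(S')\circ\alpha\circ\Delta_S=\pi_j(S')\circ\alpha$. The main delicate point I anticipate is keeping track of the composition when $\pi_1$ or $\pi_3$ appears in the middle: these projectors are not products of elementary cycles but come from a Picard-scheme construction on the curve $C$, so one has to invoke the moving lemma together with Murre's refined intersection properties of $\pi_1$ to bring the composition into general position and conclude.
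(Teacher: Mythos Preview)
The paper does not prove this theorem; it is quoted without proof as a result of Kahn--Murre--Pedrini \cite[Theorem~3.10]{MR2187153} and used as a black box in the proof of Theorem~\ref{teo:mot}. There is therefore no argument in the paper to compare your attempt against. (Note also that the displayed description of $\mathcal{M}_{rat}(h_i(S),h_j(S'))$ in the statement contains a typo: it should read $\pi_j(S')\circ A^2(S\times S')\circ \pi_i(S)$, not $\pi_i$ on both ends.)

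That said, your outline follows the strategy of the original KMP argument: exploit the explicit supports of Murre's projectors $\pi_0,\pi_1,\pi_3,\pi_4$ to reduce the cases with $i,j\in\{0,1,3,4\}$ to elementary vanishing statements for Lefschetz and curve motives, and then treat the cases involving $\pi_2$ by substitution. The shape is correct, and your caveat about $\pi_1,\pi_3$ is well placed: these are not simple products of cycles, and one genuinely needs Murre's refined construction to control compositions through them.

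One point where your expansion argument is too optimistic: after substituting $\pi_2(S)=\Delta_S-\sum_{k\neq 2}\pi_k(S)$ into $\pi_j(S')\circ\alpha\circ\pi_2(S)$ for $j<2$, the terms that remain are $\pi_j(S')\circ\alpha$ minus $\sum_{k\le j}\pi_j(S')\circ\alpha\circ\pi_k(S)$. The orthogonality relations $\pi_j(S')\circ\pi_l(S')=\delta_{jl}\pi_j(S')$ you invoke live on $S'\times S'$ and do not directly kill these residual terms, which involve projectors on the $S$ side with an arbitrary $\alpha$ in between. In KMP the vanishing in these cases is obtained instead by identifying $\mathrm{Hom}(h_i,h_j)$ explicitly with a suitable (graded piece of a) Chow group of one of the factors and then using the known action of the refined projectors on that Chow group; your sketch would need to be supplemented by that computation rather than by orthogonality alone.
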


We are finally ready to prove that a Todorov surface of type $(2,12)$ has the transcendental part of the motive isomorphic to the associated K3 surface's one. The proof is directly inspired by Laterveer's work \cite{laterveer2018algebraic}.

\begin{teo}
\label{teo:mot}
Let $S$ be a Todorov surface of type $(2,12)$, and let $M$ be the K3 surface associated to $S$, i.e. the minimal resolution of $S/\sigma$. Then there is an isomorphism of Chow motives
\[
	t_{2}(S)\cong t_{2}(M) \hbox{ in } \mathcal{M}_{rat}.
\]
\end{teo}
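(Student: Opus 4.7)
The plan is to construct mutually inverse correspondences between $t_{2}(S)$ and $t_{2}(M)$ using the degree-$2$ morphism $f\colon S\to M$, recycling the cycle identity already produced inside the proof of Theorem \ref{teo:cyc}. I would fix Chow--K\"unneth decompositions of $S$ and $M$ as in Definition \ref{defn:CKd}, together with the Kahn--Murre--Pedrini refinement of Proposition \ref{prop:tr}, and set $p:=\pi_{2}^{\tr}(S)$, $q:=\pi_{2}^{\tr}(M)$; these are the identities of $t_{2}(S)$ and $t_{2}(M)$ in $\mathcal{M}_{rat}$. Following Laterveer, I would define
\[
\Phi:=q\circ\Gamma_{f}\circ p,\qquad \Psi:=\tfrac{1}{2}\,p\circ {}^{t}\Gamma_{f}\circ q,
\]
regarded as morphisms $t_{2}(S)\to t_{2}(M)$ and $t_{2}(M)\to t_{2}(S)$. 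The theorem reduces to checking $\Psi\circ\Phi=p$ and $\Phi\circ\Psi=q$ in $\mathcal{M}_{rat}$.

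For the easy direction $\Phi\circ\Psi=q$, the generic $2$-to-$1$ nature of $f$ gives $\Gamma_{f}\circ {}^{t}\Gamma_{f}=2\Delta_{M}$ in $A^{2}(M\times M)_{\mathbb{Q}}$ up to correction cycles supported on the exceptional locus of $M\to S/\sigma$. These corrections live in $h_{0}(M)\oplus h_{2}^{\alg}(M)\oplus h_{4}(M)$ and thus vanish after sandwiching by $q$, by orthogonality of the Chow--K\"unneth projectors (Proposition \ref{prop:tr}). Expanding $p=\Delta_{S}-\pi_{0}^{S}-\pi_{2}^{\alg}(S)-\pi_{4}^{S}$ inside the composite $\tfrac{1}{2}q\circ\Gamma_{f}\circ p\circ{}^{t}\Gamma_{f}\circ q$ produces only algebraic contributions, again killed by the outer $q$'s.

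For the core direction $\Psi\circ\Phi=p$, I would invoke the cycle identity extracted from the proof of Theorem \ref{teo:cyc}:
\[
{}^{t}\Gamma_{f}\circ\Gamma_{f}\circ\pi_{2}^{S}\;=\;2\,\pi_{2}^{S}\,-\,\gamma\,-\,(\alpha_{1}+\alpha_{2})\vert_{S\times S} \quad \text{in } A^{2}(S\times S)_{\mathbb{Q}},
\]
where $\gamma$ is supported on $Y\times Y$ for some divisor $Y\subset S$ and $\alpha_{1}+\alpha_{2}$ is the restriction of an algebraic class on $(\mathbb{P}^{6}/G)\times(\mathbb{P}^{6}/G)$. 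Sandwiching by $p$ on both sides, and using $p\circ\pi_{2}^{S}=p=\pi_{2}^{S}\circ p$ together with $p\circ p=p$, reduces the claim to showing that $p\circ\gamma\circ p=0$ and $p\circ(\alpha_{1}+\alpha_{2})\vert_{S\times S}\circ p=0$ in $A^{2}(S\times S)_{\mathbb{Q}}$. The second vanishing follows since $\mathbb{P}^{6}/G$ has trivial rational Chow groups: by a K\"unneth-type decomposition the restriction $(\alpha_{1}+\alpha_{2})\vert_{S\times S}$ becomes a $\mathbb{Q}$-linear combination of cycles of the form $\{\mathrm{pt}\}\times S$, $S\times\{\mathrm{pt}\}$, or $D\times D'$ with $D,D'$ divisors on $S$, each orthogonal to $p$ via $\pi_{0}^{S}$, $\pi_{4}^{S}$ or $\pi_{2}^{\alg}(S)$. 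For the first, $\gamma$ factors through the motive of a resolution $\widetilde{Y}\to Y$, a smooth curve with $h(\widetilde{Y})=h_{0}(\widetilde{Y})\oplus h_{1}(\widetilde{Y})\oplus h_{2}(\widetilde{Y})$; the composition $p\circ\gamma\circ p$ is then an endomorphism of $t_{2}(S)$ factoring through $h(\widetilde{Y})$, which Theorem \ref{teo:KMP} combined with the purely transcendental weight-$2$ nature of $t_{2}(S)$ forces to vanish.

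The main obstacle will be precisely this last vanishing: rigorously verifying that a cycle supported on a product of divisors induces the zero endomorphism of $t_{2}(S)$. The plan is to resolve $Y$, write $\gamma$ as the push--pull of a cycle on $\widetilde{Y}\times\widetilde{Y}$, and invoke Theorem \ref{teo:KMP} together with a direct analysis of the pairings between each $h_{i}(\widetilde{Y})$ and $t_{2}(S)$ to show that none of the three summands $h_{0},h_{1},h_{2}$ of the curve can support a nontrivial factorisation of an endomorphism of $t_{2}(S)$. Once secured, this gives $\Psi\circ\Phi=p$ and, combined with $\Phi\circ\Psi=q$, produces the claimed isomorphism $t_{2}(S)\cong t_{2}(M)$ in $\mathcal{M}_{rat}$.
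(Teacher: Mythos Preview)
Your proposal is correct and follows essentially the same route as the paper: both arguments sandwich the cycle identity from Theorem~\ref{teo:cyc} by the transcendental projector to obtain $2\pi_{2}^{S,\tr}=\pi_{2}^{S,\tr}\circ{}^{t}\Gamma_{f}\circ\Gamma_{f}\circ\pi_{2}^{S,\tr}$, kill the $\gamma$ and $(\alpha_{1}+\alpha_{2})$ terms using that they factor through divisorial/algebraic pieces, and handle the other composite via $\Gamma_{f}\circ{}^{t}\Gamma_{f}=2\Delta_{M}$. The only cosmetic differences are that the paper cites \cite[Theorem~4.3]{MR2187153} directly for the vanishing of the divisor-supported term (rather than your curve-motive factorisation) and inserts the middle $\pi_{2}^{M,\tr}$ as a separate step via Theorem~\ref{teo:KMP} before concluding.
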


\begin{proof}
By the description we did of the family of Todorov surface in Section \ref{subsec:desc}, we recall that fiberwise we have the following situation:
\[
		\begin{tikzcd}
			G \curvearrowright V_{b} \qquad\arrow[d, "4:1", "q_{b}"' ]\\
				\qquad \sigma \curvearrowright S_{b}=V_{b}/G 
				\arrow[d, "2:1", "f_{b}"' ]\\
		M_{b}.
		\end{tikzcd}
\]
By Theorem \ref{teo:desc} we have that $S\cong S_{b}$ for some $b\in B$ and $M\cong M_{b}$. Let us consider now the Chow--Künneth decomposition $\{\pi_{0}^{S_{b}},\pi_{2}^{S_{b}},\pi_{4}^{S_{b}}\} $for $S_{b}$ and $\{\pi_{0}^{M_{b}},\pi_{2}^{M_{b}},\pi_{4}^{M_{b}}\} $for $M_{b}$, as in Definition \ref{defn:CKd}. Then Proposition \ref{prop:tr} gives a further decomposition in the algebraic and the transcendental part of the second component:
\[
	\pi_{2}^{S_{b}}=\pi_{2}^{S_{b},\alg}+\pi_{2}^{S_{b},\tr} \hbox{ and } 
	\pi_{2}^{M_{b}}=\pi_{2}^{M_{b},\alg}+\pi_{2}^{M_{b},\tr}.
\]
Let us consider now the correspondence constructed in the proof of Theorem \ref{teo:cyc}:
\[
 	2\Delta_{S_{b}}\circ \pi_{2}^{S_{b}}
	=^{t}\Gamma_{b}\circ \Gamma_{b}\circ \pi_{2}^{S_{b}}+\gamma_{b}+(\alpha_{1}+\alpha_{2})\vert_{S_{b}\times S_{b}}\in A^{2}_{\mathrm{hom}}(S_{b}\times S_{b})_{\mathbb{Q}},
\]
where $\Gamma_{b}$ is the graph of $f_{b}$ and $^{t}\Gamma_{b}$ is its transpose.
We apply to this the composition with the correspondence $\pi_{2}^{S_{b},\tr}$ on both sides:
\begin{equation}
\label{eq:split}
\begin{split}
	2\pi_{2}^{S_{b},\tr}
	&=\pi_{2}^{S_{b},\tr}\circ 2\Delta_{S_{b}}\circ \pi_{2}^{S_{b}}
	\circ \pi_{2}^{S_{b},\tr}\\
	&=\pi_{2}^{S_{b},\tr}\circ \left(^{t}\Gamma_{b}\circ 
	\Gamma_{b}\circ \pi_{2}^{S_{b}}+
	\gamma_{b}+(\alpha_{1}+\alpha_{2})\vert_{S_{b}\times S_{b}}\right)
	\circ \pi_{2}^{S_{b},\tr}\\
	&=\left(\pi_{2}^{S_{b},\tr}\circ ^{t}\Gamma_{b}\circ \Gamma_{b}\circ \pi_{2}^{S_{b}}
	\circ \pi_{2}^{S_{b},\tr}\right)
	+\left(\pi_{2}^{S_{b},\tr}\circ \gamma_{b}\circ \pi_{2}^{S_{b},\tr}\right)\\
	&\quad\qquad \qquad\qquad\qquad+\left(\pi_{2}^{S_{b},\tr}\circ(\alpha_{1}+\alpha_{2})\vert_{S_{b}\times S_{b}}
	\circ\pi_{2}^{S_{b},\tr}\right).
\end{split}
\end{equation}

We recall that $\gamma_{b}$ is supported on $Y_{b}\times Y_{b}$ where $Y_{b}\subset S_{b}$ is a divisor, so $\gamma_{b}$ is not dominant over either factor of $S_{b} × S_{b}$ and hence acts as $0$ (see \cite[Theorem 4.3]{MR2187153}) and can be omitted from the calculations.
In particular, this shows that $\pi_{2}^{S_{b},\tr}\circ \gamma_{b} \circ \pi_{2}^{S_{b},\tr}=0$ in $\mathcal{M}_{rat}(t_{2}(S_{b}),t_{2}(S_{b}))$.

Next we recall that $\alpha_{i}\in A^{2}(B\times \mathbb{P}^{6}\times \mathbb{P}^{6})$. So we can write
\[
	(\alpha_{1}+\alpha_{2})\vert_{S_{b}\times S_{b}}=\sum_{i,j}D_{i}\times D_{j}=\left(\sum_{i,j}D_{i}\times D_{j}\right)\circ \pi_{2}^{S_{b},\alg},
\]
where $D_{i},D_{j}\subset S_{b}$ are divisors and the last equality holds since $\pi_{2}^{S_{b},\alg}$ is a projector on the Neron--Severi group $NS(S_{b})_{\mathbb{Q}}$. Being $\pi_{2}^{S_{b},\alg}$ and $\pi_{2}^{S_{b},\tr}$ orthogonal we conclude that
\begin{equation*}
	\left(\pi_{2}^{S_{b},\tr}\circ(\alpha_{1}+\alpha_{2})\vert_{S_{b}\times S_{b}}
	\circ\pi_{2}^{S_{b},\tr}\right)=
	\left(\pi_{2}^{S_{b},\tr}\circ\left(\sum_{i,j}D_{i}\times D_{j}\right)\circ \pi_{2}^{S_{b},\alg}\circ\pi_{2}^{S_{b},\tr}\right)=0.
\end{equation*}

So in \eqref{eq:split} the only summand that survives on the left is the first one, and we get
\begin{equation}
\label{eq:S}
		2\pi_{2}^{S_{b},\tr}=\pi_{2}^{S_{b},\tr}\circ ^{t}\Gamma_{b}\circ \Gamma_{b}\circ \pi_{2}^{S_{b}}
	\circ \pi_{2}^{S_{b},\tr}=\pi_{2}^{S_{b},\tr}\circ ^{t}\Gamma_{b}\circ \Gamma_{b}	\circ \pi_{2}^{S_{b},\tr},
\end{equation}
where last equality holds since $\pi_{2}^{S_{b}}=\pi_{2}^{S_{b},\alg}+\pi_{2}^{S_{b},\tr}$ and $\pi_{2}^{S_{b},\alg}, \pi_{2}^{S_{b},\tr}$ are orthogonal.
Next we claim that
\begin{equation}
\label{eq:cl}
	2\pi_{2}^{S_{b},\tr}=\pi_{2}^{S_{b},\tr}\circ ^{t}\Gamma_{b}\circ \pi_{2}^{M_{b},\tr} \circ \Gamma_{b}	\circ \pi_{2}^{S_{b},\tr} \hbox{ in } \mathcal{M}_{rat}(t_{2}(S_{b}),t_{2}(S_{b})).
\end{equation}
To prove the claim we recall that $\pi_{2}^{M_{b},\alg}$ and $\pi_{2}^{M_{b},\tr}$ are orthogonal and $\pi_{2}^{M_{b}}=\pi_{2}^{M_{b},\alg}+\pi_{2}^{M_{b},\tr}$, thus we get
\begin{align*}
	\pi_{2}^{S_{b},\tr}\circ ^{t}\Gamma_{b}\circ \pi_{2}^{M_{b},\tr} \circ \Gamma_{b}	
	\circ \pi_{2}^{S_{b},\tr}&=
	\pi_{2}^{S_{b},\tr}\circ ^{t}\Gamma_{b}\circ \pi_{2}^{M_{b}} \circ \Gamma_{b}	
	\circ \pi_{2}^{S_{b},\tr}\\
	&=
	\pi_{2}^{S_{b},\tr}\circ ^{t}\Gamma_{b}\circ 
	\left(\Delta_{M_{b}}-\pi_{0}^{M_{b}}-\pi_{4}^{M_{b}}\right) \circ \Gamma_{b}	
	\circ \pi_{2}^{S_{b},\tr}\\
	&=\pi_{2}^{S_{b},\tr}\circ ^{t}\Gamma_{b}\circ 
	\Delta_{M_{b}} \circ \Gamma_{b}	
	\circ \pi_{2}^{S_{b},\tr}=\pi_{2}^{S_{b},\tr}\circ ^{t}\Gamma_{b}\circ 
	\Gamma_{b}\circ \pi_{2}^{S_{b},\tr},
\end{align*} 
where the last equalities follow from Theorem \ref{teo:KMP}. Then we conclude the proof of the claim by means of \eqref{eq:S}.\\
Now we want to prove that, analogously, there is a rational equivalence of cycles
\begin{equation}
\label{eq:M}
	2\pi_{2}^{M_{b},\tr}=\pi_{2}^{M_{b},\tr}\circ \Gamma_{b}\circ \pi_{2}^{S_{b},\tr} \circ ^{t}\Gamma_{b}	\circ \pi_{2}^{M_{b},\tr} \hbox{ in } A^{2}(M_{b}\times M_{b})_{\mathbb{Q}}.
\end{equation}
This follows easily since
\[
	2\Delta_{M_{b}}=\Gamma_{b}\circ ^{t}\Gamma_{b} \hbox{ in } 
	A^{2}(M_{b}\times M_{b})_{\mathbb{Q}}.
\]
So applying twice on both sides $\pi_{2}^{M_{b},\tr}$ we get:
\begin{align*}
	2\pi_{2}^{M_{b},\tr}
	&=\pi_{2}^{M_{b},\tr}\circ \Gamma_{b}\circ ^{t}\Gamma_{b}
		 \circ \pi_{2}^{M_{b},\tr}
	= \pi_{2}^{M_{b},\tr}\circ \Gamma_{b}\circ \Delta_{S_{b}} \circ ^{t}\Gamma_{b} \circ \pi_{2}^{M_{b},\tr}\\
	&=\pi_{2}^{M_{b},\tr}\circ \Gamma_{b}\circ 
	\left(\Delta_{S_{b}}-\pi_{0}^{S_{b}}-\pi_{4}^{S_{b}}\right) \circ ^{t}\Gamma_{b} \circ \pi_{2}^{M_{b},\tr}\\
	&=\pi_{2}^{M_{b},\tr}\circ \Gamma_{b}\circ 
	\left(\pi_{2}^{S_{b},\alg}+\pi_{2}^{S_{b},\tr} \right)\circ ^{t}\Gamma_{b}\circ \pi_{2}^{M_{b},\tr}\\
	&=\pi_{2}^{M_{b},\tr}\circ \Gamma_{b}\circ \pi_{2}^{S_{b},\tr} \circ ^{t}\Gamma_{b}	\circ \pi_{2}^{M_{b},\tr}.
\end{align*}
By \eqref{eq:S} and \eqref{eq:M}, we conclude that $\Gamma_{b}\colon t_{2}(S_{b})\to t_{2}(M_{b})$ in $\mathcal{M}_{rat}$ is an isomorphism of motives, and its inverse is its transpose $^{t}\Gamma_{b}$. Since the transcendental part of the motive is a birational invariant, $S_{b}$ is birational to $S$, and $M_{b}$ is birational to $M$. We conclude that there is also an isomorphism of motives
\[
	t_{2}(S)\cong t_{2}(M) \text{ in } \mathcal{M}_{rat}.
\]
\end{proof}

We present some corollaries of this result. 

\begin{cor}
\label{cor:iso}
Let $S, S'$ be two isogenous Todorov surfaces of type $(2,12)$, then they have isomorphic Chow motives, i.e.
\[
	h(S)\cong h(S')\hbox{ in } \mathcal{M}_{rat}.
\]
\end{cor}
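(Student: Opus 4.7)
The plan is to reduce the isomorphism of full Chow motives to an isomorphism on each summand of a Chow--Künneth decomposition, and then feed Theorem \ref{teo:mot} into the transcendental piece. Since $q(S)=q(S')=0$, Definition \ref{defn:CKd} together with Proposition \ref{prop:tr} gives
\[
h(S)=\mathbb{Q}(0)\oplus h_2^{\mathrm{alg}}(S)\oplus t_2(S)\oplus \mathbb{Q}(-2),
\]
and an identical decomposition for $S'$, with $h_1$ and $h_3$ vanishing. The weight $0$ and weight $4$ Tate summands are manifestly isomorphic between the two surfaces, so the task reduces to exhibiting isomorphisms $h_2^{\mathrm{alg}}(S)\cong h_2^{\mathrm{alg}}(S')$ and $t_2(S)\cong t_2(S')$ in $\mathcal{M}_{rat}$.

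For the algebraic summand, Proposition \ref{prop:tr} identifies $h_2^{\mathrm{alg}}(S)$ with a direct sum of copies of $\mathbb{Q}(-1)$ indexed by the Picard number $\rho(S)$. An isogeny in the sense of Laterveer \cite{laterveer2018algebraic} preserves the rank and rational structure of the Néron--Severi group, so this piece matches across $S$ and $S'$. For the transcendental summand, Theorem \ref{teo:mot} supplies isomorphisms $t_2(S)\cong t_2(M)$ and $t_2(S')\cong t_2(M')$, where $M$ and $M'$ denote the K3 surfaces associated to $S$ and $S'$. It therefore suffices to transport the isogeny from $S\sim S'$ to a motivic isomorphism $t_2(M)\cong t_2(M')$, and concatenate.

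The main obstacle is precisely this last step, since finite-dimensionality is unknown for Todorov surfaces of type $(2,12)$ and their associated K3 surfaces, so Kimura's criterion is unavailable. The strategy is to mimic the argument of Theorem \ref{teo:mot}: conjugate the isogeny correspondence in $A^2(S\times S')_{\mathbb{Q}}$ by the graphs $\Gamma_f$ and $\Gamma_{f'}$ of the degree-$2$ quotients $f\colon S\to M$ and $f'\colon S'\to M'$, and then sandwich by the transcendental projectors $\pi_2^{M,\mathrm{tr}}$ and $\pi_2^{M',\mathrm{tr}}$. Using $f_*f^*=2\,\mathrm{id}$ on the transcendental motive together with orthogonality of the Chow--Künneth projectors (Theorem \ref{teo:KMP}), this produces mutually inverse morphisms $t_2(M)\rightleftarrows t_2(M')$ in $\mathcal{M}_{rat}$, exactly as in equations \eqref{eq:S} and \eqref{eq:M}. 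Assembling the three isomorphisms on the Chow--Künneth summands then yields $h(S)\cong h(S')$ in $\mathcal{M}_{rat}$.
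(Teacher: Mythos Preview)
Your reduction to the Chow--K\"unneth summands is fine, and the handling of $h_0$, $h_4$, and $h_2^{\mathrm{alg}}$ is correct. The gap is in the transcendental piece: you speak of ``the isogeny correspondence in $A^{2}(S\times S')_{\mathbb{Q}}$'', but no such correspondence is given. By definition (see the footnote in the paper), an isogeny is a Hodge isometry $H^{2}(S,\mathbb{Q})\cong H^{2}(S',\mathbb{Q})$, i.e.\ a linear map compatible with Hodge structure and cup product. It is not known to be induced by an algebraic cycle on $S\times S'$; that would be an instance of the Hodge conjecture. So there is nothing to conjugate by $\Gamma_f$ and $\Gamma_{f'}$. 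Even granting a correspondence $\Phi$ with the correct cohomological action, the argument of Theorem~\ref{teo:mot} would not transfer: equations \eqref{eq:S} and \eqref{eq:M} exploit the specific identities $\Gamma_b\circ{}^{t}\Gamma_b=2\Delta_{M_b}$ and the rational equivalence produced by Proposition~\ref{prop:key}, not merely that something acts isomorphically on cohomology. Without finite-dimensionality, a cohomological isomorphism does not promote to a motivic one.

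The paper's proof proceeds differently and avoids this obstacle. It first transports the Hodge isometry to the K3 level, obtaining $H^{2}_{\tr}(M)\cong H^{2}_{\tr}(M')$ via the genuine correspondences $\Gamma_{f}$, $\Gamma_{f'}$ (and Theorem~\ref{teo:mot} to see these are compatible with cup product). Then it invokes Huybrechts' theorem on the motivic \v{S}afarevi\v{c} conjecture \cite[Theorem~0.2]{2017arXiv170504063H}, which is precisely the statement that for K3 surfaces a Hodge isometry on transcendental cohomology lifts to an isomorphism of Chow motives. This gives $t_2(M)\cong t_2(M')$, and Theorem~\ref{teo:mot} then yields $t_2(S)\cong t_2(S')$. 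Huybrechts' result is the substantive external input you are missing.
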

\begin{proof}
Being $S$ and $S'$ isogenous means that there exists a Hodge isometry $\varphi \colon H^{2}(S,\mathbb{Q})\overset{\sim}{\rightarrow}H^{2}(S',\mathbb{Q})$, i.e. $\varphi$ is a isomorphism of $\mathbb{Q}$--vector spaces which is compatible with the Hodge structure and the cup product on both sides\footnote{For a discussion on the meaning and different uses of the term ``isogenous'' see \cite{MR899482}.}. This implies that there is a Hodge isometry on the transcendental cohomology $H^{2}_{\tr}(S)\cong H^{2}_{\tr}(S')$ and on the algebraic one $H^{2}_{\alg}(S)\cong H^{2}_{\alg}(S')$. Let us denote by $\overline{M},\overline{M}'$ the singular K3 surfaces associated to $S$ and $S'$ respectively, and by $M,M'$ their resolutions of singularities. Then we have an isogeny given by the pullback $H^{2}_{\tr}(S)\cong H^{2}_{\tr}(\overline{M})$, since $S$ is a double cover of $\overline{M}$. Another isogeny is given by the pullback $H^{2}_{\tr}(M)\cong H^{2}_{\tr}(\overline{M})$, since transcendental cohomology is invariant when resolving quotient singularities. By Theorem \ref{teo:mot}, since $H^{*}(t_{2}(S),\mathbb{Q})=H^{2}_{\tr}(S)$ and $H^{*}(t_{2}(M),\mathbb{Q})=H^{2}_{\tr}(M)$, we have also an isomorphism $H^{2}_{\tr}(S)\cong H^{2}_{\tr}(M)$. In particular, this isomorphism is compatible with the Hodge structure, since it comes from a correspondence, and it is compatible with the cup product. Thus we get also a Hodge isometry $H^{2}_{\tr}(M)\cong H^{2}_{\tr}(M')$. 
By Huybrechts result on the motivic Šafarevič conjecture  \cite[Theorem 0.2]{2017arXiv170504063H}, we have that this Hodge isometry can be lifted to an isomorphism of Chow motives, i.e. $h(M)\cong h(M')$, and in particular we get an isomorphism on the transcendental part of the motives $t_{2}(M)\cong t_{2}(M')$. Then, by Theorem \ref{teo:mot}, we get an isomorphism of motives $t_{2}(S)\cong t_{2}(S')$ and we conclude that $h(S)\cong h(S')$ in $\mathcal{M}_{rat}$.
\end{proof}

\begin{cor}
Let $S$ be a Todorov surface of type $(2,12)$. Assume that $P$ is a K3 surface such that there is a Hodge isometry $H^{2}_{\tr}(S)\cong H^{2}_{\tr}(P)$. Then, there is an isomorphism of Chow motives
\[
	t_{2}(S)\cong t_{2}(P) \hbox{ in } \mathcal{M}_{rat}.
\]
\end{cor}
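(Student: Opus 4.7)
The plan is to chain together Theorem \ref{teo:mot} with Huybrechts' result on the motivic Šafarevič conjecture, exactly as in the proof of Corollary \ref{cor:iso}, but with $P$ playing the role of $M'$. First, let $M$ denote the K3 surface associated to $S$ (the minimal resolution of $S/\sigma$). By Theorem \ref{teo:mot} there is an isomorphism of Chow motives
\[
    t_{2}(S)\cong t_{2}(M) \text{ in } \mathcal{M}_{rat},
\]
and consequently a Hodge isometry $H^{2}_{\tr}(S)\cong H^{2}_{\tr}(M)$ (since this isomorphism comes from a correspondence and $H^{*}(t_{2}(X),\mathbb{Q})=H^{2}_{\tr}(X)$).

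Next, I would compose this Hodge isometry with the hypothesised one to obtain a Hodge isometry
\[
    H^{2}_{\tr}(M)\cong H^{2}_{\tr}(S)\cong H^{2}_{\tr}(P).
\]
Since both $M$ and $P$ are K3 surfaces, Huybrechts' theorem on the motivic Šafarevič conjecture (\cite[Theorem 0.2]{2017arXiv170504063H}), which was the key input in Corollary \ref{cor:iso}, applies and lifts this Hodge isometry to an isomorphism of Chow motives $h(M)\cong h(P)$ in $\mathcal{M}_{rat}$. Restricting to the transcendental summand provided by Proposition \ref{prop:tr}, this gives $t_{2}(M)\cong t_{2}(P)$.

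Finally, combining the two motivic isomorphisms yields
\[
    t_{2}(S)\cong t_{2}(M)\cong t_{2}(P) \text{ in } \mathcal{M}_{rat},
\]
as required. The main content is really already in Theorem \ref{teo:mot}: the crucial step is recognising that the hypothesis on $P$ lets us transfer the Hodge isometry from the Todorov side to a comparison between two K3 surfaces, where Huybrechts' theorem is available. There is no genuine obstacle here—the argument is a short assembly of results already proved or cited in the paper, and no new correspondence needs to be constructed.
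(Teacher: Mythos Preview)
Your proposal is correct and follows essentially the same approach as the paper's own proof: use Theorem \ref{teo:mot} to pass from $S$ to the associated K3 surface $M$, compose Hodge isometries to get $H^{2}_{\tr}(M)\cong H^{2}_{\tr}(P)$, apply Huybrechts' motivic \v{S}afarevi\v{c} theorem to two K3 surfaces, and chain the resulting isomorphisms of transcendental motives.
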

\begin{proof}
Let $M$ be the K3 surface associated to $S$, then by Theorem \ref{teo:mot} we have an isomorphism $H^{2}_{\tr}(S)\cong H^{2}_{\tr}(M)$. As we noticed in the proof of Corollary \ref{cor:iso}, this isomorphism is is compatible with Hodge structure and cup product and so there is also a Hodge isometry $H^{2}_{\tr}(M)\cong H^{2}_{\tr}(M')$.
Applying Huybrechts result \cite[Theorem 0.2]{2017arXiv170504063H} we can lift this isometry to an isomorphism of motives $t_{2}(M)\cong t_{2}(P)$ in $\mathcal{M}_{rat}$. By Theorem \ref{teo:mot} we conclude that $t_{2}(S)\cong t_{2}(M)\cong t_{2}(P)$ in $\mathcal{M}_{rat}$.
\end{proof}

\begin{cor}
Let $S$ be a Todorov surface of type $(2,12)$ with very high Picard number, i.e. $\rho(S)\geq h^{1,1}(S)-1$. Then $S$ has finite dimensional motive (in the sense of Kimura and O'Sullivan \cite{MR2167204}, \cite{MR2107443}).
\end{cor}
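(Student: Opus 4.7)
The plan is to reduce finite-dimensionality of $h(S)$ to finite-dimensionality of $t_2(M)$, and then to invoke a known result for K3 surfaces with maximal or near-maximal Picard rank.

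First I would observe that, since finite-dimensionality is stable under direct sums and direct summands, it suffices to check each component of the Chow--Künneth decomposition of $S$ given by Definition \ref{defn:CKd} and Proposition \ref{prop:tr}. Because $q(S)=0$, the odd components $h_1(S)$ and $h_3(S)$ vanish, and $h_0(S)$, $h_4(S)$, $h_2^{\alg}(S)$ are sums of Lefschetz motives and hence evenly finite-dimensional. Therefore the problem reduces to showing that the transcendental part $t_2(S)$ is finite-dimensional.

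Next I would apply Theorem \ref{teo:mot}, which yields an isomorphism $t_2(S)\cong t_2(M)$ in $\mathcal{M}_{rat}$, where $M$ is the K3 surface associated to $S$. Finite-dimensionality is a property of the motive in $\mathcal{M}_{rat}$, so it suffices to prove that $t_2(M)$ is finite-dimensional. To translate the hypothesis, recall that $\dim H^{2}_{\tr}(S) = h^{1,1}(S) + 2 h^{2,0}(S) - \rho(S) = h^{1,1}(S) + 2 - \rho(S)$, which under the assumption $\rho(S)\geq h^{1,1}(S)-1$ is at most $3$. Since $H^{*}(t_2(S),\mathbb{Q}) = H^{2}_{\tr}(S)$ and similarly for $M$, the isomorphism of motives induces a Hodge isometry $H^{2}_{\tr}(S)\cong H^{2}_{\tr}(M)$, so $\dim H^{2}_{\tr}(M)\leq 3$. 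For the K3 surface $M$ we have $b_2(M)=22$ and $h^{2,0}(M)=1$, whence $\rho(M)\geq 19$.

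The final step is to apply the known result that any complex projective K3 surface with Picard number at least $19$ has finite-dimensional Chow motive. Such surfaces admit a Shioda--Inose structure, exhibiting $t_2(M)$ as a direct summand of $t_2(A)$ for some abelian surface $A$, and abelian varieties have finite-dimensional motive in the sense of Kimura--O'Sullivan. This yields finite-dimensionality of $t_2(M)$, hence of $t_2(S)$, and combining with the evenly finite-dimensional pieces $h_0(S)\oplus h_2^{\alg}(S)\oplus h_4(S)$ gives finite-dimensionality of $h(S)$, as required. The main delicate point is the bookkeeping showing that $H^{2}_{\tr}(S)\cong H^{2}_{\tr}(M)$ is truly a Hodge isometry — but this is immediate from the correspondence constructed in the proof of Theorem \ref{teo:mot}, since Chow-motivic isomorphisms act on cohomology compatibly with Hodge structures and with the intersection pairing; the appeal to the known finite-dimensionality for K3s with $\rho\geq 19$ is then off-the-shelf.
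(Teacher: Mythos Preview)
Your proof is correct and follows essentially the same route as the paper: reduce to $t_2(S)$ via the Chow--K\"unneth decomposition, invoke Theorem~\ref{teo:mot} to pass to $t_2(M)$, compute that the hypothesis forces $\rho(M)\geq 19$, and then cite the known finite-dimensionality for such K3 surfaces. Your dimension count is in fact cleaner than the paper's (which has some typos in the intermediate expressions), and your explanation of the final step via the Shioda--Inose structure unpacks what the paper leaves as a black-box citation to Pedrini.
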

\begin{proof}
By \cite[Lemma 7.6.6]{MR2187153} the motives $h_{0}(S), h_{4}(S), h_{2}^{\alg}(S)$ are finite--dimensional, hence all the summands of the Chow motive $h(S)$ are finite--dimensional except perhaps $t_{2}(S)$. Since a direct sum of finite--dimensional motives is finite--dimensional,
it is enough to prove that $t_{2}(S)$ is finite--dimensional. Let $M$ be the K3 surface associated to $S$. By Theorem \ref{teo:mot} we have $t_{2}(S)\cong t_{2}(M)$, and so it suffices to show that $t_{2}(M)$ is finite--dimensional. \\
We recall that the Picard number of $S$, $\rho(S)$, is the rank of the Neron--Severi group $NS(S)_{\mathbb{Q}}$, and $\dim H^{2}_{\tr}(S)=b_{2}(S)−\rho(S)=2-\rho(S)\leq 3-h^{1,1}(S)\leq 3$, since by hypothesis $\rho(S)\geq h^{1,1}(S)-1$.
By the isomorphism $H^{2}_{\tr}(S)\cong H^{2}_{\tr}(M)$ we get that $\rho(M)\geq H^{2}_{\tr}(M)-3=b_{2}(M)-3=19$. Since $M$ has a large Picard number, it has finite dimensional motive \cite[Theorem 2]{MR2898747}.
\end{proof}

\vskip0.8cm

\begin{nonumberingt} Thanks to Robert Laterveer for inspiring this work, helping me through it and for always pushing me. Thanks to Roberto Pignatelli for his great help and patience and thanks to Claudio Fontanari for his support. Thanks to Davide Frapporti for his last minute help. I'm deeply grateful to Milo, Elvio and Federico for always providing enthusiasm and inspiration.
\end{nonumberingt}

%

\bibliographystyle{amsplain}
\bibliography{rTodorov}

\end{document}